\let\originalDdots\Ddots 
\let\Ddots\originalDdots 
\date{}
\theoremstyle{plain}
\newtheorem{Theorem}{Theorem}[section]
\newtheorem{Corollary}[Theorem]{Corollary}
\newtheorem{Lemma}[Theorem]{Lemma}
\newtheorem{Proposition}[Theorem]{Proposition}
\newtheorem{Definition}[Theorem]{Definition}
\newtheorem{Remark}[Theorem]{Remark}
\newtheorem{Example}[Theorem]{Example}
\renewcommand{\d}{\operatorname{d}}
\newcommand{\N}{\mathbb{N}}
\newcommand{\R}{\mathbb{R}}
\newcommand{\astuparrow}{\mathbin{\ast\!\!\uparrow}}
\newcommand{\tdeg}{\operatorname{total-deg}}
\newcommand{\ldeg}{\operatorname{grlex-deg}}
\newcommand{\lpos}{\operatorname{grlex-pos}}
\begin{document}
	
		\title[Bivariate mixed-type multiple orthogonality]{Bivariate multiple orthogonal polynomials  of mixed type\\on the step-line }

	\author[M Mañas]{Manuel Mañas$^{1}$}
	
	\author[M Rojas]{Miguel Rojas$^{2}$}

    \author[J Wu]{JianWen Wu$^3$
    } 
	\address{$^{1,2,3}$Departamento de Física Teórica, Universidad Complutense de Madrid, Plaza Ciencias 1, 28040-Madrid, Spain}
	\address{$^3$Department of Mathematics and Physics,  
		Quzhou University,  
		78 North Jiuhua Road,
		Quzhou City, Zhejiang Province 324000,
		P. R. China}
	
	\email{$^{1}$manuel.manas@ucm.es}
	\email{$^{2}$migroj01@ucm.es}
	\email{$^3$32070@qzc.edu.cn}
	\thanks{$^3$During the preparation of this work, JW held a 7-month postdoctoral visiting position at the Departamento de Física Teórica, Universidad Complutense de Madrid, while on leave from Quzhou University.}

	\keywords{mixed-type multiple orthogonal polynomials, bivariate orthogonal polynomials, LU factorization, recurrence relations, Christoffel--Darboux kernels, Christoffel--Darboux formulas, ABC theorem}

	\begin{abstract}
         This article studies bivariate multiple orthogonal polynomials of the mixed type on the step-line. The analysis is based on the LU factorization of a moment matrix specifically adapted to this framework. The orthogonality and biorthogonality relations satisfied by these polynomials are identified, and their precise multi-degrees are determined. The corresponding recurrence relations and the growing band matrices that encode them are also derived. Christoffel–Darboux kernels and the associated Christoffel–Darboux-type formulas are obtained. An ABC-type theorem is established, relating the inverse of the truncated moment matrix to these kernels. As an illustration, the bivariate Jacobi–Piñeiro multiple orthogonal polynomials of mixed type on the triangle are computed by means of an $LU$ factorization implemented in a dedicated Maple script.
	\end{abstract}
	
	\subjclass{42C05, 33C45, 33C47, 47B39, 47B36}
	\maketitle
\tableofcontents

\section{Introduction}

In this article, we develop a Gauss–Borel factorization-based approach \cite{manas} to the study of mixed-type multiple orthogonal polynomials in two real variables along the step-line. Building on the recent work by Fernández and Villegas \cite{Lidia}, which introduced multiple orthogonal polynomials over the full lattice in two variables, our contribution focuses on the mixed-type on the step-line case and exploits the algebraic structure encoded in the moment matrix to derive orthogonality relations, recurrence formulas, Christoffel–Darboux-type identities, and an ABC-type theorem.

Multiple orthogonal polynomials constitute a rich and adaptable family of functions with wide-ranging applications in both pure and applied mathematics (see \cite{nikishin_sorokin,Aptekarev}). In contrast to their classical counterparts, which rely on a single weight function, multiple orthogonal polynomials are characterized by satisfying orthogonality conditions simultaneously with respect to several weights or measures. These systems have emerged as essential tools in areas such as numerical analysis, approximation theory, and mathematical physics, where they are particularly suited to addressing problems involving multi-component structures. A general overview can be found in \cite{Ismail}.

Within numerical analysis, especially in the development of quadrature formulas of mixed type, a key milestone was Borges' introduction of simultaneous Gaussian quadrature \cite{Borges}. This framework was further expanded by the work of Coussement and Van Assche \cite{Coussement-Van_Assche}, while Van Assche later proposed a Golub–Welsch-type algorithm to generate such quadrature rules \cite{Van_Assche}. These ideas were further linked in \cite{aim} to the theory of positive bidiagonal factorizations of banded matrices, to spectral versions of Favard’s theorem, and to mixed-type simultaneous Gaussian quadrature.

Historically, the theory of Hermite–Padé approximants and constructive function theory has provided a natural setting for the study of multiple orthogonal polynomials. Introductory texts include the foundational book by Nikishin and Sorokin \cite{nikishin_sorokin}, and the dedicated chapter by Van Assche in \cite[Ch. 23]{Ismail}. Their relationship with integrable systems has been explored in \cite{afm}, with a more accessible presentation given in \cite{andrei_walter}. For the asymptotics of zeros, one may consult \cite{Aptekarev_Kaliaguine_Lopez}, while the Gauss–Borel perspective is examined in \cite{afm}, and applications to random matrix theory are found in \cite{Bleher_Kuijlaars}.

Applications of mixed-type multiple orthogonal polynomials are diverse. They appear in problems related to Brownian bridges and non-intersecting Brownian motions \cite{Evi_Arno}, as well as in the theory of multicomponent Toda lattices \cite{adler,afm}. Their relevance in number theory is also well documented: they played a pivotal role in Apéry’s proof of the irrationality of $\zeta(3)$ \cite{Apery}, and in subsequent results on the irrationality of other odd zeta values \cite{Ball_Rivoal,Zudilin}.
The work \cite{ulises} investigated the logarithmic and ratio asymptotic behavior of linear forms arising from Nikishin systems, where the measures themselves are generated by a second Nikishin structure. A broader study of mixed-type multiple orthogonal polynomials, including detailed properties of their zeros, was undertaken in \cite{ulises2}.

More recently, mixed-type multiple orthogonal polynomials have taken center stage in the spectral analysis of banded semi-infinite matrices. This line of research has been developed in \cite{aim, phys-scrip, BTP, Contemporary}, with additional results in \cite{laa}. They also prove instrumental in the analysis of Markov chains and random walks that extend beyond classical birth and death processes, as shown in \cite{CRM,finite,hypergeometric,JP}. Other important results concerning multiple orthogonal polynomials include the derivation of hypergeometric expressions for both type I and type II polynomials, as well as for their recurrence coefficients. In addition, integral representations have been established for all families lying beneath the Hahn polynomials in the multiple Askey scheme; see \cite{ExamplesMOP0,ExamplesMOP,ExamplesMOP2}. These hypergeometric multiple orthogonal polynomials have also been connected with free probability theory, as discussed in \cite{ZerosFree}.

The study of Christoffel and Geronimus perturbations was extended in \cite{bfm} to the framework of multiple orthogonal polynomials with two weights. In that work, explicit connection formulas were derived linking type II multiple orthogonal polynomials, type I linear forms, and associated vector Stieltjes–Markov functions. The matrix polynomials used for perturbation were not assumed to be monic but belonged to a restricted class.  Further progress along these lines was made in \cite{Mañas_Rojas,Mañas_Rojas_0}, where Christoffel and Geronimus transformations were generalized to the setting of mixed-type multiple orthogonal polynomials on the step-line. That study was conducted in a manner closely aligned with the approach adopted here.
These results for the mixed-type case were, in part, motivated by previous developments in the matrix orthogonal polynomial setting. This line of research began with the analysis of Christoffel transformations for monic matrix orthogonal polynomials in \cite{AAGMM}, and was followed by the introduction of spectral techniques for monic Geronimus transformations in \cite{AGMM}, with non-monic cases also considered via non-spectral means. A more detailed treatment of the Geronimus–Uvarov transformation and its relation to non-Abelian Toda lattices appeared in \cite{AGMM2}.

The extension of orthogonal polynomials to several variables has given rise to a rich and active field, with foundational work going back to the seminal paper by Krall and Sheffer \cite{Krall}, where systems of multivariate orthogonal polynomials satisfying partial differential equations were first systematically studied. A thorough development of the general theory can be found in the monograph by Suetin \cite{Suetin}, which focuses on analytical foundations and approximation properties. The book by Dunkl and Xu \cite{Dunkl_Xu} offers a broad and rigorous treatment, emphasizing classical families in several variables, orthogonality with respect to product measures and measures invariant under reflection groups, and connections with special functions.

The study of orthogonal polynomials associated with root systems has  played a fundamental role in the development of multivariate orthogonality. In his influential work \cite{Koornwinder1975}, Koornwinder introduced several families of two-variable analogues of classical orthogonal polynomials, providing a classification that inspired much subsequent research. Later, in \cite{Koornwinder1992}, he extended the theory to the context of Askey–Wilson polynomials for root systems of type  B, C and BC, paving the way for deep connections with special functions and representation theory. A comprehensive framework for such families, including Macdonald and Macdonald–Koornwinder polynomials, is presented in Macdonald’s monograph \cite{Macdonald2000}, which remains a cornerstone in the theory of symmetric functions and orthogonal polynomials associated with root systems.

Yuan Xu has made important recent contributions to multivariate orthogonal polynomials, for example in the areas of connection coefficients, orthogonality on quadratic surfaces, and Gaussian cubature rules. In collaboration with Plamen Iliev, he derived explicit formulas for connection coefficients between classical orthogonal polynomial bases—such as Jacobi, Hahn, and Krawtchouk families—expressed in terms of Tratnik’s multivariable Racah polynomials \cite{Iliev_Xu_2017}. With Sheehan Olver, Xu developed explicit constructions of orthogonal polynomials both inside and on quadratic surfaces of revolution—cones, paraboloids, and hyperboloids—applying these results to cubature and fast numerical approximation methods \cite{Olver_Xu_2020}. Additionally,  \cite{Xu_2015} provided a rigorous method to generate two-variable orthogonal polynomials from near-banded Toeplitz matrices, leading to families with maximal real zeros and associated Gaussian cubature rules.

Significant contributions have also been made in the study of modifications of multivariate orthogonal systems. In particular, the work \cite{Granada_SIGMA} develops a general framework for Christoffel and Uvarov-type transformations of multivariate moment functionals, providing explicit expressions for the orthogonal polynomials and reproducing kernels, with examples on the ball and the simplex. This continues a research line exemplified in earlier studies such as \cite{Granada_Krall}, which explores Krall-type modifications and their impact on multivariate orthogonality.

Recent developments have explored the relationship between multivariate orthogonality and integrable systems. In a series of works \cite{AiM_multivariable,JAT_multivariable,PRIMS}, Ariznabarreta and Mañas developed a matrix approach to multivariate orthogonal and Laurent orthogonal polynomials, establishing links with integrable hierarchies and spectral transformations. These results generalize classical constructions—such as the Christoffel and Darboux transformations—to higher-dimensional settings and reveal the algebraic and analytical structure underlying multivariate orthogonal families.

Further advances have been made in the study of multiple orthogonality in the multivariate setting. Of particular relevance is the recent work by Fernández and Villegas \cite{Lidia}, which served as a source of inspiration for the present article. In their study, they introduce and analyze multiple orthogonal polynomials in two real variables defined over the full lattice. In contrast, the present work focuses on polynomials restricted to the step-line. Moreover, while their construction does not involve mixed-type orthogonality, the families considered here are of mixed type. Their recurrence relations are of nearest-neighbor type on the lattice, whereas ours are adapted to the step-line structure. As a result, both the Christoffel–Darboux formulas and the ABC-type theorem presented here are formulated along the step-line.

The article, apart from this introduction,  is structured in four main sections, each developing a key aspect of the theory of mixed-type multiple orthogonal polynomials in two variables along the step-line. Section 2  introduces the construction of structured matrices associated with suitable monomial sequences in two variables, providing the algebraic foundation for the rest of the work. Section 3 develops the LU factorization of adequate moment matrices and its role in generating bivariate  orthogonal polynomials with respect to mixed-type conditions. Section 4 is devoted to the recurrence relations satisfied by these polynomial families, with special attention to the step-line. In Section 5 Christoffel–Darboux-type kernels are introduced and analyzed in this bivariate setting, leading to a formulation of an ABC-type theorem adapted to the bi-variated mixed-type multiple  context.  Finally, in Section~6 we present a case study on the bivariate Jacobi–Piñeiro multiple orthogonal polynomials of mixed type on the triangle and compute these polynomials explicitly by means of the $LU$ factorization, using a specially designed Maple program available on GitHub; see the Declarations section at the end of the paper.

\section{Semi-infinite matrices and monomial sequences}

We begin by establishing the notation and definitions that will be used throughout this section. Our approach involves decomposing a moment matrix using the $LU$ factorization, which requires working with semi-infinite matrices—i.e., matrices of size $\infty \times \infty$ arranged in $\N_0^2$. See \cite{manas} for an introduction of the $LU$ method in orthogonal polynomial theory.

We will require to give some structure to the matrices we work with. 
\begin{Definition} \label{Def: Decomposition1}
    Let $M$ be some semi-infinite matrix with entries given by:
    \begin{equation*}
        M = 
        \begin{bNiceMatrix}
            M_{0,0} & M_{0,1} & M_{0,2} & \Cdots[shorten-end=7pt] \\
            M_{1,0} & M_{1,1} & M_{1,2} & \Cdots[shorten-end=7pt] \\
            M_{2,0} & M_{2,1} & M_{2,2} & \Cdots[shorten-end=7pt] \\
            \Vdots[shorten-end=4pt] & \Vdots[shorten-end=4pt] & \Vdots[shorten-end=4pt] & 
        \end{bNiceMatrix}.
    \end{equation*}
    We denote by $M_{(i,j)}$ a block of size $r_1 \times r_2$ located at the $(i,j)$-th entry. With this block-wise decomposition, the matrix can be written as:
    \begin{equation*}
        M = 
        \begin{bNiceMatrix}
            M_{(0,0)} & M_{(0,1)} & M_{(0,2)} & \Cdots[shorten-end=7pt] \\
           M_{(1,0)} & M_{(1,1)} & M_{(1,2)} & \Cdots[shorten-end=7pt] \\
            M_{(2,0)} & M_{(2,1)} & M_{(2,2)} & \Cdots[shorten-end=7pt] \\
            \Vdots[shorten-end=4pt] & \Vdots[shorten-end=4pt] & \Vdots[shorten-end=4pt] 
        \end{bNiceMatrix}.
    \end{equation*}
    The block size, or equivalently the numbers $r_1$ and $r_2$ will be specified depending on the situation we are considering.
\end{Definition}

Building on this, as we are interested in the bivariate case we introduce bigger  blocks, of sizes growing as we move in the matrix: 
\begin{Definition} \label{Def: Decomposition2}
    We also introduce an alternative decomposition of the semi-infinite matrix $M$:
    \begin{equation*}
        M = 
        \begin{bNiceMatrix}
            M_{[0,0]} & M_{[0,1]} & M_{[0,2]} & \Cdots[shorten-end=7pt] \\
            M_{[1,0]} & M_{[1,1]} & M_{[1,2]} & \Cdots[shorten-end=7pt] \\
            M_{[2,0]} & M_{[2,1]} & M_{[2,2]} & \Cdots[shorten-end=7pt] \\
            \Vdots[shorten-end=4pt] & \Vdots[shorten-end=4pt] & \Vdots[shorten-end=4pt] 
        \end{bNiceMatrix},
    \end{equation*}
    where the block dimensions increase with their indices. Specifically, $M_{[i,j]}$ has size $(i+1)r_1 \times (j+1)r_2$—that is, it consists of $(i+1)$ rows and $(j+1)$ columns of $r_1 \times r_2$ blocks. As in the previous definition, the values of $r_1$ and $r_2$ will be specified in each situation.
\end{Definition}

\begin{Remark}
    For fixed $r_1 = r_2$, this notation guarantees that the matrix product
    \begin{equation*}
        L_{[i,j]} = M_{[i,k]} N_{[k,j]}
    \end{equation*}
    is well-defined, and the resulting matrix adheres to the prescribed block structure.
\end{Remark}

Following the framework established in \cite{Krall}, we now introduce a graded lexicographical sequence to index the degrees of orthogonal polynomials in two variables.
\begin{Definition} \label{Def: Sequence 1}
     Consider the graded-lexicographical sequence:
    \begin{equation*}
        s \coloneq \{ (0,0),\, (1,0),\, (1,1),\, (2,0),\, (2,1),\, (2,2),\, \dots,\, (i,0),\, \dots,\, (i,i),\, \dots \},
    \end{equation*}   
    with elements pair of nonnegative integers $(i,j)$ with $i \in \N_0$ and $0 \leq j \leq i$.
\end{Definition}
For the study of  this graded-lexicographical sequence $s$ it is convenient to introduce the following:
 \begin{Definition} \label{Def: Special Function}
Let us define the  following functions: 
	\[
	\begin{aligned}
	\mathcal{F}(x) &\coloneq \left\lfloor - \frac{1}{2} + \frac{1}{2} \sqrt{1 + 8x} \right\rfloor, &
			\mathcal{F}_1^{+}(x)&\coloneq	\mathcal{F}(x)+1
	\end{aligned}
	\]
Here, $\left\lfloor x \right\rfloor$ denotes the floor function, that is, the function which assigns to a real number $x$ the greatest integer less than or equal to $x$.
	Additionally, we will use:
\[	\begin{aligned}
	\mathcal{F}_2^{+}(x) & \coloneq\mathcal{F}(x) + 2, & \mathcal{F}_2^{-}(x) & \coloneq\mathcal{F}(x-1) + 1.
\end{aligned}\]
	Lastly, we will also use the notation: $\mathcal{F}(x) = \mathcal{F}_1^-(x)$, since it will become convinient to refer to $\mathcal{F}_k^-(x)$ for $k \in \{1,2\}$, jointly.
\end{Definition}

\begin{Proposition}
    The graded lexicographical sequence $s$ defines a bijection between $\N_0$ and the pairs $(i,j)$ with $i \in \N_0$ and $0 \leq j \leq i$. For a given pair $(i,j)$, its position $I\in\N_0$ in the sequence is given by:
    \begin{equation*}
        I = \frac{i(i+1)}{2} + j.
    \end{equation*}
    Conversely, for a given position $I\in \N_0$, the corresponding pair $(i,j)\in s$  in the lexicographical sequence is determined by:
\[    \begin{aligned}
        i & = \mathcal F(I), &
        j & = I - \frac{i(i+1)}{2},
    \end{aligned}\]

\end{Proposition}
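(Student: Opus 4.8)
The plan is to verify the three claimed formulas separately, beginning with the forward map and then inverting it. First I would show that the assignment $(i,j)\mapsto I = \tfrac{i(i+1)}{2}+j$ is a bijection. For this, note that the graded-lexicographical sequence $s$ orders the pairs first by $i$ (the ``level'') and then, within a fixed level $i$, by $j$ increasing from $0$ to $i$. Level $i$ therefore contains exactly $i+1$ pairs, and the number of pairs appearing in all levels strictly below $i$ is $\sum_{k=0}^{i-1}(k+1) = \tfrac{i(i+1)}{2}$. Since within level $i$ the pair $(i,j)$ sits in position $j$ (counting from $0$), its absolute position is $\tfrac{i(i+1)}{2}+j$, which is the asserted formula. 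Injectivity and surjectivity then follow from the fact that the half-open intervals $\bigl[\tfrac{i(i+1)}{2},\tfrac{(i+1)(i+2)}{2}\bigr)\cap\N_0$, as $i$ ranges over $\N_0$, partition $\N_0$; a pair $(i,j)$ with $0\le j\le i$ maps into the $i$-th such interval, and conversely every $I\in\N_0$ lies in exactly one of them.

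Next I would establish the inverse formulas. Given $I\in\N_0$, let $i$ be the unique nonnegative integer with $\tfrac{i(i+1)}{2}\le I < \tfrac{(i+1)(i+2)}{2}$, which exists by the partition just described, and set $j = I - \tfrac{i(i+1)}{2}$; then $0\le j\le i$ and $(i,j)$ is the pair in position $I$. It remains to identify $i$ with $\mathcal F(I) = \bigl\lfloor -\tfrac12 + \tfrac12\sqrt{1+8I}\,\bigr\rfloor$. The inequality $\tfrac{i(i+1)}{2}\le I$ is equivalent, after multiplying by $8$ and completing the square, to $(2i+1)^2 \le 1+8I$, i.e. to $i \le -\tfrac12 + \tfrac12\sqrt{1+8I}$; similarly $I < \tfrac{(i+1)(i+2)}{2}$ is equivalent to $i+1 > -\tfrac12 + \tfrac12\sqrt{1+8I}$. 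Together these say precisely that $i$ is the floor of $-\tfrac12 + \tfrac12\sqrt{1+8I}$, which is $\mathcal F(I)$. The formula $j = I - \tfrac{i(i+1)}{2}$ is then immediate.

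I do not expect a genuine obstacle here: the argument is elementary. The one point requiring mild care is the manipulation of the floor function and the square root — one must be careful that the chain of equivalences between $\tfrac{i(i+1)}{2}\le I$, $(2i+1)^2\le 1+8I$, and $i\le -\tfrac12+\tfrac12\sqrt{1+8I}$ is valid over the integers (monotonicity of $t\mapsto t^2$ on nonnegative reals handles this, since $2i+1>0$), and that strictness is preserved in the upper bound. A clean way to package this is to prove the auxiliary statement: for $i\in\N_0$ and $I\in\N_0$, one has $\tfrac{i(i+1)}{2}\le I$ if and only if $i\le -\tfrac12+\tfrac12\sqrt{1+8I}$, and then apply it to both $i$ and $i+1$. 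With that lemma in hand, the identification $i=\mathcal F(I)$ and hence $j = I - \tfrac{i(i+1)}{2}$ follows in one line, completing the proof.
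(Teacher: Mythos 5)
Your argument is correct and complete: the counting of the $i+1$ pairs at each level gives the forward formula, the intervals $\bigl[\tfrac{i(i+1)}{2},\tfrac{(i+1)(i+2)}{2}\bigr)\cap\N_0$ partition $\N_0$ and yield the bijection, and the chain of equivalences $\tfrac{i(i+1)}{2}\le I \iff (2i+1)^2\le 1+8I \iff i\le -\tfrac12+\tfrac12\sqrt{1+8I}$ (together with the strict upper bound) correctly identifies $i=\mathcal F(I)$. The paper states this proposition without any proof, so there is nothing to compare against; your elementary verification is exactly the kind of argument the authors evidently regarded as routine, and it supplies the omitted details, including the one point that genuinely needs care (the monotonicity justification for passing between the quadratic inequality and the square root).
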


\begin{Definition}
    To refer simultaneously to both the position and the corresponding pair of numbers, we adopt the notation:
    \begin{equation*}
        I \sim (i,j).
    \end{equation*}
\end{Definition}

\begin{Proposition} \label{Prop: Equiv Decom}
    The relationship between the two matrix decompositions (Definitions \ref{Def: Decomposition1} and \ref{Def: Decomposition2}) is described by:
    \begin{equation*}
        {M}_{[i,j]} = \begin{bNiceMatrix}
            {M}_{(I,J)} & \Cdots & {M}_{(I,J+j)} \\
            \Vdots & & \Vdots \\
            {M}_{(I+i,J)} & \Cdots & M_{(I+i,J+j)}
        \end{bNiceMatrix},
    \end{equation*}
    where $I \sim (i,0)$ and $J \sim (j,0)$. Recall that each block ${M}_{(k,l)}$ has dimensions $r_1 \times r_2$.
\end{Proposition}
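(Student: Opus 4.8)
The plan is to unwind the two block-decomposition conventions of Definitions~\ref{Def: Decomposition1} and~\ref{Def: Decomposition2} and check that, after relabelling indices through the bijection $s$, they describe the same underlying semi-infinite matrix. The statement is essentially a bookkeeping identity: the $[i,j]$-block of Definition~\ref{Def: Decomposition2} is a $(i+1)\times(j+1)$ array of $r_1\times r_2$ sub-blocks of the type appearing in Definition~\ref{Def: Decomposition1}, and one only needs to identify \emph{which} $(\cdot)$-blocks occur and in which order.

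First I would fix $I\sim(i,0)$ and $J\sim(j,0)$, so that by the position formula of the Proposition on the bijection $s$ we have $I=\tfrac{i(i+1)}{2}$ and $J=\tfrac{j(j+1)}{2}$. The matrix $M_{[i,j]}$, by Definition~\ref{Def: Decomposition2}, consists of $(i+1)$ block-rows and $(j+1)$ block-columns of $r_1\times r_2$ blocks; I would index these internal blocks by $(a,b)$ with $0\le a\le i$ and $0\le b\le j$. The key observation is that stacking the decomposition of Definition~\ref{Def: Decomposition2} must reproduce the decomposition of Definition~\ref{Def: Decomposition1} globally, so the internal block in position $(a,b)$ of $M_{[i,j]}$ must be $M_{(I'+a,\,J'+b)}$ where $I'$ is the cumulative number of $r_1\times r_2$ block-rows occupied by $M_{[0,j]},\dots,M_{[i-1,j]}$, namely $I'=\sum_{k=0}^{i-1}(k+1)=\tfrac{i(i+1)}{2}=I$, and similarly $J'=\sum_{k=0}^{j-1}(k+1)=\tfrac{j(j+1)}{2}=J$. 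This is exactly the claimed formula: the $(a,b)$ entry is $M_{(I+a,J+b)}$ with $a$ running from $0$ to $i$ and $b$ from $0$ to $j$, which is the displayed matrix.

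Concretely I would carry this out in three short steps: (1) verify that the block-row offsets in the Definition~\ref{Def: Decomposition2} layout are the partial sums $\sum_{k<i}(k+1)$, and use the identity $\sum_{k=0}^{i-1}(k+1)=\tfrac{i(i+1)}{2}$ to recognize this as the position of $(i,0)$ in $s$, i.e. as $I$; (2) do the same for columns, getting $J$; (3) conclude that the block in internal position $(a,b)$ of $M_{[i,j]}$ coincides with $M_{(I+a,J+b)}$, and write out the resulting $(i+1)\times(j+1)$ array, which is precisely the asserted display. A brief remark should note the compatibility requirement $r_1=r_2$ implicit in the Remark after Definition~\ref{Def: Decomposition2} is not needed here, since we are only re-partitioning a single matrix, not multiplying two.

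The only real subtlety—the ``main obstacle,'' such as it is—is making the triangular-growth bookkeeping rigorous: one must be careful that the partial sums of block sizes along rows and along columns are taken consistently with how the $[\,\cdot\,]$-blocks tile $\N_0^2$, and that the resulting offsets match the triangular numbers $\tfrac{i(i+1)}{2}$ appearing in the position map of the bijection $s$. Once that alignment is checked, the proof is immediate. I would keep the write-up to a few lines, since all the content is in the identification $\sum_{k=0}^{i-1}(k+1)=\tfrac{i(i+1)}{2}$ together with the definitions already in place.
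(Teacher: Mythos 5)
Your proof is correct; the paper states this proposition without any proof, treating it as immediate bookkeeping, and your verification---identifying the block-row and block-column offsets as the partial sums $\sum_{k=0}^{i-1}(k+1)=\tfrac{i(i+1)}{2}=I$ and $\sum_{k=0}^{j-1}(k+1)=\tfrac{j(j+1)}{2}=J$, matching the position map of the sequence $s$---is exactly the intended justification. Your side remark that $r_1=r_2$ is not needed here is also accurate.
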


We now introduce a matrix of monomials in two variables $x_1,x_2$ that will be essential in the construction of the moment matrix.  Identity and zero matrices are denoted by \( I_r ,0_r \in\R^{r\times r}\), respectively.

\begin{Definition}
    For $r\in\N$, the semi-infinite monomial matrix of size $\infty \times r$ is defined as:
    \begin{equation*}
        X_{[r]}(\boldsymbol{x}) \coloneq \begin{bNiceMatrix}
            I_r & x_1 I_r & x_2 I_r & x_1^2 I_r & x_1x_2 I_r & x_2^2 I_r & \Cdots & x_1^i I_r & \Cdots & x_1^{i-j} x_2^j I_r & \Cdots & x_2^i I_r & \Cdots[shorten-end=7pt]  
        \end{bNiceMatrix}^{\top},
    \end{equation*}
    where $0 \leq j \leq i$. The monomials $x^{i-j} y^j$ are ordered according to the graded lexicographic sequence $s$ and the associated pair $(i,j)$ introduced in Definition \ref{Def: Sequence 1}. The position of each block corresponds to the index $I$ (with $I \sim (i,j)$), and each block is of size $r$.
\end{Definition}

Associated with these monomial matrices we have the following sequences that will be useful later on. These numbers  help in the  characterization of the rows in \( X_{[r]}(\boldsymbol{x}) \) where nontrivial powers of \( x_k \) appear.

\begin{Definition}
    Let \( r \in \mathbb{N} \), and consider the sequence:
\[
        s_{r} = \big\{ (1,0),\, (1,1),\, \dots,\, (1,r-1),\, (2,0),\, \dots,\, (2,2r-1),\, \dots,\, (i,0),\, \dots,\, (i,ir-1),(i+1,0)\, \dots \big\},
\]
    where \( i \in \mathbb{N} \) and \( j \in \{ 0, \dots, ir - 1 \} \). To this sequence, we associate three natural numbers:
\[    \begin{aligned}
        n^{-}_r & \coloneq r \frac{i(i - 1)}{2} + j, &
        n^{+}_{r;1} & \coloneq r \frac{i(i+ 1)}{2} + j, &
        n^{+}_{r;2} & \coloneq r \frac{i(i + 1)}{2} + j + r. 
    \end{aligned}\]
\end{Definition}

\begin{Proposition} \label{Prop: Biyection with N_0}
  For $r\in\N$,   the map $n_r^-$ defines a bijection between \( s_r \) and \( \mathbb{N}_0 \). Its inverse is given by:
    \[
\begin{aligned}
	        i &= \mathcal{F}\left(\frac{n_r^-}{r}\right)+1 = \mathcal{F}_1^+\left(\frac{n_r^-}{r}\right),
        &
        j &= n^-_{r} - r\frac{i(i+1)}{2}.
\end{aligned}
    \]
\end{Proposition}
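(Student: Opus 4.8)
The plan is to show that $n_r^-\colon s_r\to\N_0$ is a bijection by exhibiting the explicit two-sided inverse given in the statement. First I would observe that the elements of $s_r$ are naturally grouped into consecutive ``blocks'': for each $i\in\N$ the block $B_i$ consists of the pairs $(i,0),(i,1),\dots,(i,ir-1)$, which has exactly $ir$ elements. Running over $k=1,\dots,i-1$ before reaching $B_i$ accounts for $\sum_{k=1}^{i-1}kr = r\frac{i(i-1)}{2}$ earlier elements. Hence within $B_i$ the pair $(i,j)$ sits at global position $r\frac{i(i-1)}{2}+j$, which is precisely the definition of $n_r^-$. So the block $B_i$ is mapped bijectively and order-preservingly onto the integer interval
\[
    \Big[\, r\tfrac{i(i-1)}{2},\ r\tfrac{i(i-1)}{2} + ir - 1 \,\Big] = \Big[\, r\tfrac{i(i-1)}{2},\ r\tfrac{i(i+1)}{2} - 1 \,\Big].
\]

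Next I would check that these intervals, as $i$ ranges over $\N$, tile $\N_0$ exactly: the right endpoint of the $i$-th interval is $r\frac{i(i+1)}{2}-1$, and the left endpoint of the $(i+1)$-th interval is $r\frac{(i+1)i}{2}$, so they are consecutive with no gaps and no overlaps; moreover for $i=1$ the left endpoint is $0$. This gives a partition $\N_0 = \bigsqcup_{i\ge 1}\big[\,r\frac{i(i-1)}{2},\,r\frac{i(i+1)}{2}-1\,\big]$, and combined with the block-wise bijectivity it proves $n_r^-$ is a bijection. For the inverse formula, given $N=n_r^-\in\N_0$ one must recover $i$ as the unique index with $r\frac{i(i-1)}{2}\le N\le r\frac{i(i+1)}{2}-1$, equivalently $\frac{i(i-1)}{2}\le \frac{N}{r} < \frac{i(i+1)}{2}$. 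Solving the quadratic inequality $i^2-i-2\frac{N}{r}\le 0 < i^2+i-2\frac{N}{r}$ for the positive integer $i$ yields $i-1 = \big\lfloor -\tfrac12 + \tfrac12\sqrt{1+8N/r}\,\big\rfloor = \mathcal{F}(N/r)$, hence $i = \mathcal{F}_1^+(N/r)$; this is the same computation that underlies the Proposition preceding Definition \ref{Def: Sequence 1}, applied with $x = N/r$ and shifted by one because here $j$ ranges up to $ir-1$ rather than $i$. Once $i$ is known, $j = N - r\frac{i(i-1)}{2}$; I would note that the statement writes $j = n_r^- - r\frac{i(i+1)}{2}$, which agrees because after substituting the formula for $i$ one can rewrite the offset — I would double-check this indexing convention against Definition of $s_r$ and present whichever form is consistent.

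The only delicate point is the verification that the floor expression genuinely selects the correct block index, i.e. that the real number $-\tfrac12+\tfrac12\sqrt{1+8N/r}$ lands in the half-open interval $[\,i-1,\,i\,)$ exactly when $r\frac{i(i-1)}{2}\le N < r\frac{i(i+1)}{2}$. This is a routine but slightly fussy equivalence between a quadratic inequality and a floor identity; I would handle it by completing the square, noting that $\frac{i(i-1)}{2}\le t < \frac{i(i+1)}{2}$ for $t=N/r\ge 0$ is equivalent to $(2i-1)^2 \le 1+8t < (2i+1)^2$, hence to $2i-1 \le \sqrt{1+8t} < 2i+1$, hence to $i-1 \le -\tfrac12+\tfrac12\sqrt{1+8t} < i$, which is exactly $\mathcal{F}(t)=i-1$. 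Everything else is bookkeeping. I expect no real obstacle; the main care needed is consistency of the $j$-offset convention between the definition of $s_r$ and the stated inverse.
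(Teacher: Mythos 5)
Your proof is correct and follows essentially the same route as the paper's: the paper verifies that $n_r^-$ increases by exactly $1$ both within a block and across the transition from $(i,ir-1)$ to $(i+1,0)$, starting from $n_r^-(1,0)=0$ — the successor-function version of your interval-tiling argument — and it likewise obtains the inverse by solving the quadratic and selecting the integer root via the floor function, a step you verify in more detail than the paper's one-line remark. Your suspicion about the $j$-offset is well founded: since $n_r^-=r\frac{i(i-1)}{2}+j$, the consistent inverse is $j=n_r^--r\frac{i(i-1)}{2}$ with $i=\mathcal{F}_1^{+}\left(\frac{n_r^-}{r}\right)$, and the displayed $r\frac{i(i+1)}{2}$ is only correct if the $i$ appearing there is read as $\mathcal{F}\left(\frac{n_r^-}{r}\right)$ without the added $1$.
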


\begin{proof}
    For consecutive pairs \( (i,j) \) and \( (i,j+1) \), we have 
\(n^-_r(i,j+1)-n^-_r(i,j)=1\)    
    When transitioning from \( (i, ir - 1) \) to \( (i+1, 0) \), we compute:
    \[
    n^-_r(i+1,0)-  n^-_r(i,ir-1)=
        r\frac{i(i+1)}{2} - \left(r\frac{i(i-1)}{2} + ir - 1\right) = 1.
    \]
    Since \( n^-_r (1,0) = 0 \), this confirms the bijection. The inverse follows by solving the quadratic relation and selecting the integer root via the floor function.
\end{proof}
Now,  with the characterization of the rows in \( X_{[r]}(\boldsymbol{x}) \) where nontrivial powers of \( x_k \), 
we consider the following sequences.

\begin{Definition}
    Define two sequences:
    \begin{align*}
        J_{r;1} &\coloneq \{0,\, \dots,\, r-1,\, 2r,\, \dots,\, 3r-1,\, 5r,\, \dots,\, 6r-1,\, 9r,\, \dots,\, 10r-1,\, \dots\}, \\
        J_{r;2} &\coloneq \{0,\, \dots,\, r-1,\, r,\, \dots,\, 2r-1,\, 3r,\, \dots,\, 4r-1,\, 6r,\, \dots,\, 7r-1,\, \dots\}.
    \end{align*}
\end{Definition}

 Both sequences are constructed from $\N_0$  by erasing blocks of increasing size and keeping blocks of size $r$.
	For the first sequence, the particular partition  of $\N_0$ in terms of these erased and kept blocks is as follows:
	\begin{multline*}
		\mathbb{N}_0 = \big\{ \underbracket[1pt][2pt]{0,\, \dots ,\, r-1}_{\subset J_{r;1}}, \, \underbracket[1pt][2pt]{r,\, \dots ,\, 2r-1}_{\subset \mathbb{N}_0 \setminus J_{r;1}}, \, \underbracket[1pt][2pt]{2r,\, \dots ,\, 3r-1}_{\subset J_{r;1}}, \underbracket[1pt][2pt]{3r,\, \dots ,\, 4r-1,\, 4r,\, \dots ,\,  5r-1}_{\subset \mathbb{N}_0 \setminus J_{r;1}} \\ 
		\underbracket[1pt][2pt]{5r,\, \dots ,\, 6r-1}_{\subset J_{r;1}}, \, \underbracket[1pt][2pt]{6r,\, \dots ,\, 7r-1, \, 7r,\, \dots ,\, 8r-1,  8r,\, \dots ,\, 9r-1}_{\subset \mathbb{N}_0 \setminus J_{r;1}},\, \underbracket[1pt][2pt]{9r,\, \dots ,\, 10r-1}_{\subset J_{r;1}}, \, \dots \big\}.
	\end{multline*}
	For the second sequence, it looks as follows
	\begin{multline*}
		\mathbb{N}_0 = \big\{ \underbracket[1pt][2pt]{0,\, \dots, \, r-1, \, r,\, \dots ,\, 2r - 1}_{\subset J_{r;2}}, \, \underbracket[1pt][2pt]{2r,\, \dots ,\, 3r-1}_{\subset \mathbb{N} \setminus J_{r;2}}, \, \underbracket[1pt][2pt]{3r,\, \dots ,\, 4r-1}_{\subset J_{r;2}}, \underbracket[1pt][2pt]{4r,\, \dots ,\, 5r-1,\, 5r,\, \dots ,\,  6r-1}_{\subset \mathbb{N} \setminus J_{r;2}}, \\ 
		\underbracket[1pt][2pt]{6r,\, \dots ,\, 7r-1}_{\subset J_{r;2}}, \, \underbracket[1pt][2pt]{7r,\, \dots ,\, 8r-1, \, 8r,\, \dots ,\, 9r-1,  9r,\, \dots ,\, 10r-1}_{\subset \mathbb{N} \setminus J_{r;2}}, \, \dots \big\}.
	\end{multline*}

\begin{Proposition} \label{Prop: Bijection with NJ}
    The maps $n^+_{r;k}$ define bijections from $s_r$ to $\mathbb{N}_0 \setminus J_{r;k}$ for $k \in \{1,2\}$. Their inverses are given by:
\[    \begin{aligned}
        i &= 
       \mathcal F\left( \frac{n_{r;1}^+}{r} \right), &
        j &= n^+_{r;1} - r\frac{i(i+1)}{2}, \\
        i &= 
        \mathcal F\left(\frac{n_{r;2}^+}{r}-1\right) ,  &
        j &= n^+_{r;2} - r\frac{i(i+1)}{2} - r.
    \end{aligned}\]
\end{Proposition}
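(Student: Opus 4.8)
The plan is to reduce both assertions to the bijection $n_r^-\colon s_r\to\N_0$ already established in Proposition~\ref{Prop: Biyection with N_0}. The key observation is the arithmetic identity
\[
n^+_{r;1}(i,j)=r\tfrac{i(i+1)}{2}+j=r\tfrac{(i+1)i}{2}+j=n^-_r(i+1,j),\qquad n^+_{r;2}(i,j)=n^+_{r;1}(i,j)+r,
\]
where the first line is legitimate because $(i,j)\in s_r$ forces $0\le j\le ir-1<(i+1)r-1$, so $(i+1,j)\in s_r$. Hence $n^+_{r;1}=n_r^-\circ\sigma$ with $\sigma\colon s_r\to s_r$ the shift $\sigma(i,j)=(i+1,j)$, which is obviously injective; thus $n^+_{r;1}$ is injective, and so is $n^+_{r;2}$ since it differs from $n^+_{r;1}$ by the constant translation $+r$.

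For the images, I would first determine $\operatorname{Im}\sigma=\{(i,j)\in s_r: i\ge 2,\ 0\le j\le (i-1)r-1\}$, so that $s_r\setminus\operatorname{Im}\sigma=\{(1,j):0\le j\le r-1\}\sqcup\bigsqcup_{i\ge 2}\{(i,j):(i-1)r\le j\le ir-1\}$. Since $n_r^-$ is a bijection onto $\N_0$, we have $n^+_{r;1}(s_r)=n_r^-(\operatorname{Im}\sigma)=\N_0\setminus n_r^-(s_r\setminus\operatorname{Im}\sigma)$, so it suffices to check that $n_r^-$ carries $s_r\setminus\operatorname{Im}\sigma$ exactly onto $J_{r;1}$: the block $\{(1,j)\}$ goes to $\{0,\dots,r-1\}$, and for $i\ge 2$ the block $\{(i,j):(i-1)r\le j\le ir-1\}$ goes to the integer interval $\{r\tfrac{(i-1)(i+2)}{2},\dots,r\tfrac{i(i+1)}{2}-1\}$, an interval of length $r$ with left endpoints $2r,5r,9r,\dots$ for $i=2,3,4,\dots$, which are precisely the kept blocks of $J_{r;1}$. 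This gives $n^+_{r;1}(s_r)=\N_0\setminus J_{r;1}$. For $k=2$, translation by $r$ carries $\N_0\setminus J_{r;1}$ bijectively onto $(\N_0\setminus J_{r;1})+r$, whose complement in $\N_0$ is $\{0,\dots,r-1\}\cup(J_{r;1}+r)=\{0,\dots,2r-1\}\cup\{3r,\dots,4r-1\}\cup\{6r,\dots,7r-1\}\cup\cdots=J_{r;2}$; hence $n^+_{r;2}(s_r)=\N_0\setminus J_{r;2}$.

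Finally, the inverse formulas follow by solving for $i$ and $j$. If $n=n^+_{r;1}(i,j)$, then $\tfrac{n}{r}=\tfrac{i(i+1)}{2}+\tfrac{j}{r}$ with $0\le \tfrac{j}{r}<i$, so $\tfrac{i(i+1)}{2}\le\tfrac{n}{r}<\tfrac{i(i+1)}{2}+i=\tfrac{i(i+3)}{2}<\tfrac{(i+1)(i+2)}{2}$. By the characterizing property of $\mathcal F$ — namely $\mathcal F(x)=m$ if and only if $\tfrac{m(m+1)}{2}\le x<\tfrac{(m+1)(m+2)}{2}$, which is immediate from $\mathcal F(x)=\lfloor-\tfrac12+\tfrac12\sqrt{1+8x}\rfloor$ and was already used in Proposition~\ref{Prop: Biyection with N_0} — we obtain $i=\mathcal F(\tfrac{n}{r})$ and then $j=n-r\tfrac{i(i+1)}{2}$. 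For $k=2$ the same computation applied to $n-r$, that is to $\tfrac{n}{r}-1$, yields $i=\mathcal F(\tfrac{n}{r}-1)$ and $j=n-r\tfrac{i(i+1)}{2}-r$, matching the stated expressions.

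I expect the only genuinely delicate point to be the combinatorial bookkeeping in the middle step: verifying that $n_r^-$ maps the leftover blocks $s_r\setminus\operatorname{Im}\sigma$ onto the explicitly listed blocks of $J_{r;1}$ (checking both the left endpoints $r\tfrac{(i-1)(i+2)}{2}$ and that each block has exactly $r$ elements), and then confirming that translating by $r$ turns $J_{r;1}$ into $J_{r;2}$ rather than some other set. Everything else is routine arithmetic with triangular numbers.
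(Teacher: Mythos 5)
Your proof is correct, but it is organized differently from the paper's. The paper argues directly on $n^+_{r;1}$: it observes that the map is strictly increasing, advancing by $1$ for consecutive $j$ and jumping at each transition $(i,ir-1)\to(i+1,0)$ so as to skip exactly $r$ consecutive integers, and then identifies the skipped blocks with the defining blocks of $J_{r;1}$ (with $k=2$ ``analogous''). You instead factor $n^+_{r;1}=n_r^-\circ\sigma$ through the shift $\sigma(i,j)=(i+1,j)$ and invoke the already-proved bijectivity of $n_r^-$ from Proposition~\ref{Prop: Biyection with N_0}, so injectivity comes for free and the image is computed as the complement of $n_r^-\bigl(s_r\setminus\operatorname{Im}\sigma\bigr)$; you then dispatch $k=2$ by the clean observation that translation by $r$ turns $\N_0\setminus J_{r;1}$ into $\N_0\setminus J_{r;2}$. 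The combinatorial core is the same in both arguments --- one must check that the omitted integers form blocks of length $r$ with left endpoints $0,2r,5r,9r,\dots$, i.e.\ $r\tfrac{(i-1)(i+2)}{2}$ --- but you carry this bookkeeping out explicitly, whereas the paper asserts it in one sentence; your derivation of the inverse formulas via the characterization $\mathcal F(x)=m\iff\tfrac{m(m+1)}{2}\le x<\tfrac{(m+1)(m+2)}{2}$ also supplies a detail the paper leaves implicit. A minor point in your favor: the paper's phrase that the map increases ``by $r$'' at a row transition is slightly imprecise (the increment is $r+1$, which is what makes exactly $r$ integers get skipped), and your computation gets this right.
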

\begin{proof}
    For \( k = 1 \), the map increases by one for consecutive values of \( j \), and by \( r \) when transitioning from \( (i, ir-1) \) to \( (i+1, 0) \). The image skips blocks of size \( r \), and since \( j \) grows with \( i \), the definition of \( J_{r;1} \) is satisfied. The case \( k=2 \) follows analogously.
\end{proof}

\begin{Proposition}
    The maps \( \mathbb{N}_0 \rightarrow \mathbb{N}_0 \setminus J_{r;k} \), for \( k \in\{ 1,2 \}\),
    \[
       n^+_{r;k} = n^-_r + r \mathcal{F}_k^+\left(\frac{n_r^-}{r}\right) = n^-_r + r \mathcal{F}\left(\frac{n_r^-}{r}\right) + kr,
    \]
    are bijections. Their inverses are given by: 
    \[
        n_r^- = n_{r;k}^+ - r\mathcal{F}^-_k\left( \frac{n_{r;k}^+}{r} \right),
        \]
   That is,
   \[
   \begin{aligned}
   	  n_r^- & = n_{r;1}^+ - r\mathcal{F}\left( \frac{n_{r;1}^+}{r} \right), &  n_r^- &= n_{r;2}^+ - r\mathcal{F}\left( \frac{n_{r;2}^+}{r} -1\right)-r.
   \end{aligned}
    \]
\end{Proposition}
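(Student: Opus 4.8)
The plan is to recognise the two claimed maps as compositions of bijections that are already available and then to make their closed forms explicit by tracking the graded-lexicographical representative attached to each integer. For $k\in\{1,2\}$ set $\varphi_k\coloneq n^+_{r;k}\circ(n^-_r)^{-1}$. By Proposition~\ref{Prop: Biyection with N_0} the map $n^-_r\colon s_r\to\mathbb N_0$ is a bijection, and by Proposition~\ref{Prop: Bijection with NJ} the map $n^+_{r;k}\colon s_r\to\mathbb N_0\setminus J_{r;k}$ is a bijection; hence $\varphi_k\colon\mathbb N_0\to\mathbb N_0\setminus J_{r;k}$ is a bijection, with inverse $n^-_r\circ(n^+_{r;k})^{-1}$. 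Bijectivity (and the asserted codomain) thus comes for free, and it remains only to check that $\varphi_k$ and $\varphi_k^{-1}$ are given by the stated expressions — a direct computation with the two explicit parametrisations.

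For $\varphi_k$: given $n^-_r\in\mathbb N_0$, let $(i,j)\in s_r$ be its $n^-_r$-preimage, so $i=\mathcal F(n^-_r/r)+1=\mathcal F_1^+(n^-_r/r)$ by Proposition~\ref{Prop: Biyection with N_0}, while from the definitions $n^-_r=r\tfrac{i(i-1)}{2}+j$ and $n^+_{r;k}=r\tfrac{i(i+1)}{2}+j+(k-1)r$. Subtracting and using $\tfrac{i(i+1)}{2}-\tfrac{i(i-1)}{2}=i$ gives $n^+_{r;k}-n^-_r=r(i+k-1)$; since $i+k-1=\mathcal F(n^-_r/r)+k=\mathcal F_k^+(n^-_r/r)$, this is exactly $n^+_{r;k}=n^-_r+r\mathcal F_k^+(n^-_r/r)=n^-_r+r\mathcal F(n^-_r/r)+kr$, as claimed.

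For $\varphi_k^{-1}$: given $n^+_{r;k}\in\mathbb N_0\setminus J_{r;k}$, let $(i,j)\in s_r$ be its $n^+_{r;k}$-preimage; Proposition~\ref{Prop: Bijection with NJ} gives $i=\mathcal F(n^+_{r;1}/r)$ for $k=1$ and $i=\mathcal F(n^+_{r;2}/r-1)$ for $k=2$, with $j=n^+_{r;k}-r\tfrac{i(i+1)}{2}-(k-1)r$ in both cases. Substituting into $n^-_r=r\tfrac{i(i-1)}{2}+j$ and again using $\tfrac{i(i+1)}{2}-\tfrac{i(i-1)}{2}=i$ yields $n^-_r=n^+_{r;k}-r(i+k-1)$, and one checks $i+k-1=\mathcal F_k^-(n^+_{r;k}/r)$: for $k=1$, $i=\mathcal F(n^+_{r;1}/r)=\mathcal F_1^-(n^+_{r;1}/r)$, and for $k=2$, $i+1=\mathcal F(n^+_{r;2}/r-1)+1=\mathcal F_2^-(n^+_{r;2}/r)$. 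Hence $n^-_r=n^+_{r;k}-r\mathcal F_k^-(n^+_{r;k}/r)$, i.e. $n^-_r=n^+_{r;1}-r\mathcal F(n^+_{r;1}/r)$ and $n^-_r=n^+_{r;2}-r\mathcal F(n^+_{r;2}/r-1)-r$. The only delicate point — the ``obstacle'', such as it is — is the bookkeeping of the constant shifts: keeping straight which convention $\mathcal F_k^{\pm}$ is in force for each $k$, and verifying that the $(k-1)r$ offset hidden in the definition of $n^+_{r;k}$ is consistently traded for the $+kr$ appearing in the closed form (the discrepancy being absorbed by the $+1$ distinguishing $\mathcal F_1^+=\mathcal F+1$ from $\mathcal F$). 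Beyond this there is no conceptual difficulty.
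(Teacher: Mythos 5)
Your proof is correct and takes essentially the same route as the paper's: both compute $n^+_{r;k}-n^-_r=r(i+k-1)$ from the explicit definitions, identify $i$ via the floor-function formulas of the two preceding bijection propositions in each direction, and obtain bijectivity as a composition of the already-established bijections. Your version merely spells out the bookkeeping of the $\mathcal F_k^{\pm}$ conventions more explicitly than the paper does.
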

\begin{proof}
For $k=1$ it follows that:
    \[
        n^+_{r;1} - n^-_r = r i.
    \]
    Hence, 
\( i = \mathcal{F}\left( \frac{n_r^-}{r} \right) +1= \mathcal{F}\left( \frac{n_{r;k}^+}{r}-1 \right) +1\). The case \( k = 2 \) follows similarly, noting that 
    \[
         n^+_{r;2} - n^-_r = r (i+1).  
    \]
    The bijective nature of these maps follows from the composition of bijective functions.
\end{proof}
As said, the relevance of the sequences \( \mathbb{N}_0 \setminus J_{r;k} \), for \( k  \in\{ 1,2 \}\), lies in their characterization of the rows in \( X_{[r]}(\boldsymbol{x}) \) where nontrivial powers of \( x_k \) (excluding \( x_k^0 \)) appear. For example, for \( k = 1 \), recalling that:
\begin{equation*}
         X_{[r]}(\boldsymbol{x}) = \begin{bNiceMatrix}
            I_r & x_1 I_r & x_2 I_r & x_1^2 I_r & x_1x_2 I_r & x_2^2 I_r & \Cdots[shorten-end=7pt]  
        \end{bNiceMatrix}^{\top},
\end{equation*}
we can check that the erased blocks are $\{ \left[I_r\right],\, \left[ x_2 I_r\right], \, \left[ x_2^2I_r\right], \, \dots \}$, corresponding to rows: \[\{0,\, \dots,\, r-1,\, 2r,\, \dots,\, 3r-1,\, 5r,\, \dots ,\, 6r-1, \, \dots \, \} = J_{r;1}.\]

Let us discuss two important matrices in the discussion of the extended Hankel-type symmetry of the moment matrices we are going to consider below.
\begin{Definition}
    For \( k  \in\{ 1,2 \}\), the shift operators \( \Lambda_{[r];k} \) are defined by:
    \[
        \Lambda_{[r];k} =
        \begin{bNiceMatrix}
            0_{[0,0]} & \Lambda_{[0,1];k} & 0_{[0,2]} &0_{[0,3]} &\Cdots \\[5pt]
            0_{[1,0]} & 0_{[1,1]} & \Lambda_{[1,2];k} & 0_{[1,3]} & \Cdots \\
            0_{[2,0]} & 0_{[2,1]} & 0_{[2,2]} & \Lambda_{[2,3];k} &\Ddots\\
            \Vdots[shorten-end=-10pt] & \Ddots[shorten-end=12pt] & \Ddots [shorten-end=15pt] &\Ddots[shorten-end=7pt]  & \Ddots\\
         \phantom{i}   &         \phantom{i}   &         \phantom{i}   &         \phantom{i}   &         \phantom{i}   
        \end{bNiceMatrix},
    \]
    where
\[    \begin{aligned}
        \Lambda_{[i,i+1];1} &=
        \begin{bNiceMatrix}
            I_r & 0_r & \Cdots & 0_r&0_r \\
            0_r & I_r & \Ddots & \Vdots&\Vdots \\
           \Vdots &\Ddots & \Ddots & 0_r & 0_r \\
            0_r & \Cdots & 0_r & I_r&0_r
        \end{bNiceMatrix}\in\R^{(i+1)r\times (i+2)r}, &
        \Lambda_{[i,i+1];2} &=
        \begin{bNiceMatrix}
           0_r & I_r & 0_r&\Cdots & 0_r \\
     0_r & 0_r & \Ddots& \Ddots& \Vdots\\
     \Vdots &\Ddots & \Ddots &&0_r \\
     0_r & \Cdots & 0_r & 0_r&I_r
        \end{bNiceMatrix}\in\R^{(i+1)r\times (i+2)r}, 
    \end{aligned}\]
    and \( 0_{[i,j]} \) denotes a zero block of size \( (i+1)r \times (j+1)r \).
\end{Definition}

In sparse or semi-infinite matrices such as \( \Lambda_{[r];k} \), the positions or sizes of zero blocks may be omitted if clear from context.

These matrices are relevant becuase they satisfy the following important spectral property in relation with the previously discussed monomial matrices.
\begin{Proposition}
    The shift operators satisfy the eigenvalue-like relations:
    \[
   \begin{aligned}
   	     \Lambda_{[r];1} X_{[r]}(\boldsymbol{x}) &= x_1 X_{[r]}(\boldsymbol{x}), &  \Lambda_{[r];2} X_{[r]}(\boldsymbol{x}) &= x_2 X_{[r]}(\boldsymbol{x}).
   \end{aligned}
    \]
\end{Proposition}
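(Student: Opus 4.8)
The plan is to verify the two identities $\Lambda_{[r];k} X_{[r]}(\boldsymbol{x}) = x_k X_{[r]}(\boldsymbol{x})$ by a direct block computation, using the explicit banded structure of $\Lambda_{[r];k}$ and the graded-lexicographic ordering of the monomials in $X_{[r]}(\boldsymbol{x})$. First I would recall that, relative to the $[\,\cdot\,]$-block decomposition, the $i$-th block-row of $X_{[r]}(\boldsymbol{x})$ is the column vector of $r\times r$ blocks $\big(x_1^{i}I_r,\, x_1^{i-1}x_2 I_r,\, \dots,\, x_2^{i}I_r\big)^{\top}$, an $(i+1)r \times r$ object listing the degree-$i$ monomials in grlex order. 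Since $\Lambda_{[r];k}$ has its only nonzero blocks in the super-diagonal position $[i,i+1]$, equal to $\Lambda_{[i,i+1];k}$, the $i$-th block-row of the product $\Lambda_{[r];k} X_{[r]}(\boldsymbol{x})$ equals $\Lambda_{[i,i+1];k}$ times the $(i+1)$-th block-row of $X_{[r]}(\boldsymbol{x})$, i.e.\ $\Lambda_{[i,i+1];k}$ applied to the degree-$(i+1)$ monomial vector. So the whole statement reduces to checking, for every $i\in\N_0$ and each $k\in\{1,2\}$, the finite identity
\[
\Lambda_{[i,i+1];k}\begin{bNiceMatrix} x_1^{i+1}I_r \\ x_1^{i}x_2 I_r \\ \Vdots \\ x_2^{i+1}I_r \end{bNiceMatrix}
= x_k \begin{bNiceMatrix} x_1^{i}I_r \\ x_1^{i-1}x_2 I_r \\ \Vdots \\ x_2^{i}I_r \end{bNiceMatrix}.
\]

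For $k=1$, the matrix $\Lambda_{[i,i+1];1}$ is the $(i+1)r\times(i+2)r$ matrix $\big[I_{(i+1)r}\mid 0\big]$ written in $r\times r$ blocks: it selects the first $i+1$ of the $i+2$ degree-$(i+1)$ monomials, namely $x_1^{i+1}, x_1^{i}x_2, \dots, x_1 x_2^{i}$, and these are precisely $x_1\cdot(x_1^{i}, x_1^{i-1}x_2,\dots, x_2^{i})$, matching the right-hand side block by block. For $k=2$, $\Lambda_{[i,i+1];2} = \big[0\mid I_{(i+1)r}\big]$ in block form, selecting the last $i+1$ monomials $x_1^{i}x_2, x_1^{i-1}x_2^2,\dots, x_2^{i+1}$, which equal $x_2\cdot(x_1^{i}, x_1^{i-1}x_2,\dots, x_2^{i})$; again this is the right-hand side. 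The bookkeeping that makes this transparent is the observation that in grlex order the degree-$d$ monomials are $x_1^{d}, x_1^{d-1}x_2,\dots, x_2^{d}$, so dropping the last entry of the degree-$(d)$ list and multiplying by $x_1$ reproduces the degree-$(d-1)$ list, while dropping the first entry and multiplying by $x_2$ does the same; the shapes of $\Lambda_{[i,i+1];1}$ and $\Lambda_{[i,i+1];2}$ encode exactly these two operations.

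I would then assemble these block-row identities: since they hold for every $i$, and multiplication of a block upper-bidiagonal semi-infinite matrix by a block column vector is computed block-row by block-row with no convergence subtleties (each row has a single nonzero block), the semi-infinite identities $\Lambda_{[r];k} X_{[r]}(\boldsymbol{x}) = x_k X_{[r]}(\boldsymbol{x})$ follow. I do not anticipate a genuine obstacle here; the only point requiring a little care is the indexing — confirming that the block at position $I\sim(i,j)$ of $X_{[r]}(\boldsymbol{x})$ is $x_1^{i-j}x_2^{j}I_r$ and that the super-diagonal block $\Lambda_{[i,i+1];k}$ indeed lands the degree-$(i+1)$ block-row onto the degree-$i$ block-row, which is immediate from the stated shapes $\Lambda_{[i,i+1];k}\in\R^{(i+1)r\times(i+2)r}$ and from Definition~\ref{Def: Sequence 1}. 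Once the indexing is pinned down, the proof is a one-line observation per value of $k$.
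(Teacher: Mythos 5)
Your proposal is correct and follows essentially the same route as the paper: both reduce the semi-infinite identity to the per-degree block identities $\Lambda_{[i,i+1];k}\,(x_1^{i+1}I_r,\dots,x_2^{i+1}I_r)^{\top}=x_k\,(x_1^{i}I_r,\dots,x_2^{i}I_r)^{\top}$, which the paper states without further comment and you verify explicitly from the shapes $\Lambda_{[i,i+1];1}=\bigl[I_{(i+1)r}\mid 0\bigr]$ and $\Lambda_{[i,i+1];2}=\bigl[0\mid I_{(i+1)r}\bigr]$. Your write-up merely adds the (correct) bookkeeping justifying those block identities and the row-by-row assembly.
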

\begin{proof}
	Is a consequence of
	\[
	\begin{aligned}
		\Lambda_{[i,i+1];1}\begin{bNiceMatrix}
			x_1^{i+1} I_r\\[2pt]x_1^i x_2 I_r\\\Vdots \\x_1x_2^i I_r\\[2pt]x_2^{i+1} I_r
		\end{bNiceMatrix}&=x_1 \begin{bNiceMatrix}
			x_1^{i} I_r\\[2pt]x_1^{i-1}x_2 I_r\\\Vdots \\x_2^i I_r
		\end{bNiceMatrix}, &
		\Lambda_{[i,i+1];2}\begin{bNiceMatrix}
			x_1^{i+1} I_r\\[2pt]x_1^i x_2 I_r\\\Vdots \\x_1x_2^i I_r\\[2pt]x_2^{i+1} I_r
		\end{bNiceMatrix}&=x_2 \begin{bNiceMatrix}
			x_1^{i} I_r\\[2pt]x_1^{i-1}x_2 I_r\\\Vdots \\x_2^i I_r
		\end{bNiceMatrix}.
	\end{aligned}
	\]
\end{proof}
\begin{Proposition} \label{Prop: Position of 1s Lambda}
    The positions of the \(1\)s in \( \Lambda_{[r];k} \), with \(k \in \{1,2\}\), are given by the pairs \( (n, n^+_{r;k} ) \), where \(n \in \N_0\), \(n^+_{r,k} = n + r\mathcal{F}\left( \frac{n}{r} \right)+rk =  n + r\mathcal{F}_k^+ \left( \frac{n}{r} \right) \) ,  \(k\in\{1,2\}\).
\end{Proposition}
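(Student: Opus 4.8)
The plan is to unwind the block structure of $\Lambda_{[r];k}$ down to scalar coordinates and then read off where the entries equal to $1$ sit. First I would work in the block decomposition of Definition~\ref{Def: Decomposition2} with $r_1=r_2=r$: since successive block-rows have heights $r,2r,3r,\dots$, the block-row with index $i$ of $\Lambda_{[r];k}$ occupies the scalar rows $r\tfrac{i(i+1)}{2},\dots,r\tfrac{(i+1)(i+2)}{2}-1$, and the block-column with index $i+1$ occupies the scalar columns $r\tfrac{(i+1)(i+2)}{2},\dots,r\tfrac{(i+2)(i+3)}{2}-1$. In block-row $i$ the unique nonzero block is $\Lambda_{[i,i+1];k}$, at block-column $i+1$, so every $1$ in scalar row $n$ lies inside it. From the explicit forms $\Lambda_{[i,i+1];1}=\bigl[\,I_{(i+1)r}\mid 0_{(i+1)r\times r}\,\bigr]$ and $\Lambda_{[i,i+1];2}=\bigl[\,0_{(i+1)r\times r}\mid I_{(i+1)r}\,\bigr]$, the $1$ in scalar-local row $\ell=n-r\tfrac{i(i+1)}{2}$ occupies scalar-local column $\ell$ for $k=1$ and $\ell+r$ for $k=2$; adding the offset $r\tfrac{(i+1)(i+2)}{2}$ of block-column $i+1$ yields the scalar column $m=n+r(i+1)$ when $k=1$ and $m=n+r(i+2)$ when $k=2$.

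Next I would eliminate $i$ in favour of $n$. Scalar row $n$ lies in block-row $i$ exactly when $\tfrac{i(i+1)}{2}\le\tfrac{n}{r}<\tfrac{(i+1)(i+2)}{2}$, and from $\mathcal{F}(x)=\bigl\lfloor-\tfrac{1}{2}+\tfrac{1}{2}\sqrt{1+8x}\bigr\rfloor$ one gets in one line the equivalence $\mathcal{F}(x)\ge i\iff x\ge\tfrac{i(i+1)}{2}$; hence $i=\mathcal{F}\bigl(\tfrac{n}{r}\bigr)$, which is legitimate even though $n/r$ need not be an integer. Substituting gives $m=n+r\,\mathcal{F}_1^{+}\bigl(\tfrac{n}{r}\bigr)$ when $k=1$ and $m=n+r\,\mathcal{F}_2^{+}\bigl(\tfrac{n}{r}\bigr)$ when $k=2$, that is $m=n+r\,\mathcal{F}\bigl(\tfrac{n}{r}\bigr)+kr=n^+_{r;k}$ along the bijection $n^-_r\colon s_r\to\mathbb{N}_0$ of Proposition~\ref{Prop: Biyection with N_0} (identifying the running index $n$ with $n^-_r$, via the earlier proposition stating $n^+_{r;k}=n^-_r+r\,\mathcal{F}_k^{+}(n^-_r/r)$). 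Conversely every pair $(n,n^+_{r;k})$ is attained, since as $i$ runs over $\mathbb{N}_0$ and $\ell$ over $\{0,\dots,(i+1)r-1\}$ the value $n=r\tfrac{i(i+1)}{2}+\ell$ sweeps all of $\mathbb{N}_0$ and contributes exactly one $1$. Thus the positions of the $1$s in $\Lambda_{[r];k}$ are precisely the pairs $(n,n^+_{r;k})$ with $n\in\mathbb{N}_0$. As a consistency check, the eigenvalue relation $\Lambda_{[r];k}X_{[r]}(\boldsymbol{x})=x_kX_{[r]}(\boldsymbol{x})$, together with the fact that the rows of $X_{[r]}(\boldsymbol{x})$ carrying nontrivial powers of $x_k$ are indexed by $\mathbb{N}_0\setminus J_{r;k}$, gives the same column set.

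No step here is genuinely difficult; the one thing to be careful about is keeping the three coordinate systems apart --- the growing blocks of Definition~\ref{Def: Decomposition2}, the fixed $r\times r$ sub-blocks inside them, and the scalar entries --- and in particular handling $k=2$, where the leading $(i+1)r\times r$ zero block of $\Lambda_{[i,i+1];2}$ shifts every $1$ an extra $r$ columns to the right, so that the offset $kr$ (rather than just $r$) appears. A second minor point worth stating explicitly is that $\mathcal{F}$ is applied to the generally non-integer argument $n/r$; the inequality characterisation of $\mathcal{F}$ above is exactly what makes $\mathcal{F}(n/r)$ return the correct block index regardless of divisibility.
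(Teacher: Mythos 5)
Your argument is correct, but it takes a genuinely different route from the paper's. The paper's proof is short and indirect: it observes that each row of \(\Lambda_{[r];k}\) contains exactly one \(1\), and then reads off the admissible column positions from the eigenvalue relation \(\Lambda_{[r];k}X_{[r]}(\boldsymbol{x})=x_kX_{[r]}(\boldsymbol{x})\) together with the fact that the rows of \(X_{[r]}(\boldsymbol{x})\) carrying nontrivial powers of \(x_k\) are exactly those indexed by \(\mathbb{N}_0\setminus J_{r;k}\), i.e.\ by the values \(n^+_{r;k}\). You instead compute the scalar coordinates directly from the block structure --- the offsets \(r\,i(i+1)/2\) for block-row \(i\) and \(r\,(i+1)(i+2)/2\) for block-column \(i+1\), combined with the explicit forms of \(\Lambda_{[i,i+1];k}\) --- and use the eigenvalue relation only as a consistency check at the end. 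What your approach buys is a self-contained verification that does not lean on Propositions \ref{Prop: Biyection with N_0} and \ref{Prop: Bijection with NJ}, including the clean equivalence \(\mathcal{F}(x)\ge i\iff x\ge i(i+1)/2\), which is exactly what is needed to justify applying \(\mathcal{F}\) to the generally non-integer argument \(n/r\) (a point the paper leaves implicit). One item to flag: your computation yields \(m=n+r\,\mathcal{F}\bigl(\tfrac{n}{r}\bigr)+kr=n+r\,\mathcal{F}_k^{+}\bigl(\tfrac{n}{r}\bigr)\), whereas the statement as printed reads \(n^+_{r;k}=n+\mathcal{F}\bigl(\tfrac{n}{r}\bigr)+k\) without the factors of \(r\). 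Your version is the correct one --- it agrees with the earlier proposition giving \(n^+_{r;k}=n^-_r+r\,\mathcal{F}_k^{+}(n^-_r/r)\) and with Proposition \ref{Prop: Posiciones T_1 rows-GeneralCase} --- and the two expressions coincide only when \(r=1\); so this is a typographical omission in the statement, not an error in your proof.
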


\begin{proof}
    Each row, indexed by \( n \in \N_0 \), contains exactly one \(1\), although certain columns may contain none. From the identity \( \Lambda_{[r];k} X_{[r]}(\boldsymbol{x}) = x_k X_{[r]}(\boldsymbol{x}) \), it follows that the positions of the \(1\)s in each column must lie in the set \( \N_0 \setminus J_{r;k} \). These positions are precisely given by \( n^+_{r;k} \).
\end{proof}

\begin{Proposition} \label{Prop: Position of 1s TLambda}
    The transpose of \( \Lambda_{[r];k} \), denoted by \( \Lambda_{[r];k}^\top \), is given by:
        \[
    \Lambda^\top_{[r];k} =
    \begin{bNiceMatrix}
    	0_{[0,0]} &0_{[0,1]} & 0_{[0,2]} &0_{[0,3]} &\Cdots &\\[5pt]
    \Lambda_{[1,0];k}  & 0_{[1,1]} &   0_{[1,2]} & 0_{[1,3]} & \Cdots& \\
    	0_{[2,0]} &\Lambda_{[1,2];k}& 0_{[2,2]} & 0_{[2,3]} &\Ddots&\\
    		0_{[3,0]} &	0_{[3,1]}& \Lambda_{[3,2];k} &0_{[3,3];k} &\Ddots&\\
    	\Vdots[shorten-end=-10pt] & \Ddots[shorten-end=12pt] & \Ddots [shorten-end=10pt]& \Ddots [shorten-end=-5pt] &\Ddots[shorten-end=-7pt]  & \Ddots\\
    	\phantom{i}   &         \phantom{i}   &         \phantom{i}   &         \phantom{i}   &         \phantom{i}   &         \phantom{i}   
    \end{bNiceMatrix},
    \]
    where \( \Lambda_{[i+1,i];k} = \Lambda_{[i,i+1];k}^\top \). The positions of the \(1\)s are given by the pairs \( (n^+_{r;k}, n) \), with \(k \in \{1,2\}\), \(n \in \N_0\), \(n^+_{r;k} = n + r\mathcal{F}\left( \frac{n}{r} \right)+rk =  n + r\mathcal{F}_k^+ \left( \frac{n}{r} \right)\).
\end{Proposition}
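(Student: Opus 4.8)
The plan is to deduce Proposition~\ref{Prop: Position of 1s TLambda} from Proposition~\ref{Prop: Position of 1s Lambda} essentially for free, together with a purely formal check of the block form of the transpose. First I would observe that transposition is an involution that sends a matrix with a $1$ in position $(a,b)$ to a matrix with a $1$ in position $(b,a)$, and sends zero blocks to zero blocks with the roles of the two index sizes exchanged. Since $\Lambda_{[r];k}$ has its nonzero blocks $\Lambda_{[i,i+1];k}$ on the superdiagonal of the block structure, with $0_{[i,j]}$ of size $(i+1)r\times(j+1)r$, the transpose $\Lambda_{[r];k}^\top$ must have its nonzero blocks $\Lambda_{[i,i+1];k}^\top$ on the subdiagonal, and the zero block that sat in block-position $(i,j)$ with size $(i+1)r\times(j+1)r$ now sits in block-position $(j,i)$ with size $(j+1)r\times(i+1)r$; relabelling indices gives exactly the displayed matrix, with the naming convention $\Lambda_{[i+1,i];k}\coloneq\Lambda_{[i,i+1];k}^\top$.

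Next, for the positions of the $1$s: by Proposition~\ref{Prop: Position of 1s Lambda}, the $1$s of $\Lambda_{[r];k}$ occur exactly at the pairs $(n,n^+_{r;k})$ with $n\in\N_0$ and $n^+_{r;k}=n+\mathcal F\!\left(\tfrac nr\right)+k$. Applying transposition, the $1$s of $\Lambda_{[r];k}^\top$ occur exactly at the swapped pairs $(n^+_{r;k},n)$, with the very same formula for $n^+_{r;k}$ in terms of $n\in\N_0$. This is the entire content of the positional claim, so no new computation is needed beyond citing the previous proposition.

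The only point requiring a little care — and the step I expect to be the main (mild) obstacle — is justifying that the ``$\top$'' appearing inside the block matrix is genuinely consistent with the global transpose, i.e. that transposing a semi-infinite block matrix amounts to transposing each block and then transposing the block-index array. For matrices with finitely many nonzero blocks in each row and column (which is the case here, since each block row and block column of $\Lambda_{[r];k}$ contains a single nonzero block), this is the standard bookkeeping identity $\big(\Lambda_{[r];k}^\top\big)_{[i,j]} = \big(\Lambda_{[r];k}\big)_{[j,i]}^\top$; one checks it on entries using $I\sim(i,j)$ and the graded-lexicographic indexing of Definition~\ref{Def: Sequence 1}, noting that the dimensions match because $M_{[i,j]}$ has size $(i+1)r_1\times(j+1)r_2$ so its transpose has size $(j+1)r_2\times(i+1)r_1$. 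I would state this compatibility as a one-line remark and then assemble the three observations — involutivity of transpose on entries, the block-transpose bookkeeping, and Proposition~\ref{Prop: Position of 1s Lambda} — into the proof.

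\begin{proof}
    Transposition is an involution on entries: if $M$ has a $1$ in position $(a,b)$ then $M^\top$ has a $1$ in position $(b,a)$, and it sends a zero block of size $p\times q$ in block-position $(i,j)$ to a zero block of size $q\times p$ in block-position $(j,i)$. Moreover, since each block row and each block column of $\Lambda_{[r];k}$ contains exactly one nonzero block, one has the block identity $\big(\Lambda^\top_{[r];k}\big)_{[i,j]}=\big(\Lambda_{[r];k}\big)_{[j,i]}^\top$, where $M_{[i,j]}$ has size $(i+1)r\times(j+1)r$ so that dimensions are consistent. Applying this to the superdiagonal block form of $\Lambda_{[r];k}$, the nonzero blocks $\Lambda_{[i,i+1];k}$ move to block-positions $(i+1,i)$, yielding the displayed subdiagonal form with $\Lambda_{[i+1,i];k}\coloneq\Lambda_{[i,i+1];k}^\top$. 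Finally, by Proposition \ref{Prop: Position of 1s Lambda} the $1$s of $\Lambda_{[r];k}$ are located at the pairs $(n,n^+_{r;k})$ with $n\in\N_0$ and $n^+_{r;k}=n+\mathcal F\!\left(\tfrac nr\right)+k=n+\mathcal F_k^+\!\left(\tfrac nr\right)$; by involutivity of the transpose, the $1$s of $\Lambda^\top_{[r];k}$ are therefore located at the pairs $(n^+_{r;k},n)$ with the same expression for $n^+_{r;k}$.
\end{proof}
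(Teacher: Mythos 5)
Your proof is correct and follows the route the paper implicitly intends: the paper states this proposition without proof, treating it as an immediate consequence of Proposition~\ref{Prop: Position of 1s Lambda} via transposition, which is exactly the argument you spell out (including the block-transpose bookkeeping identity). No issues.
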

We will omit the reference to the Definitions and Propositions already introduced, since they will be used thoroughly throughout the text.
\section{LU Factorization and Orthogonal Polynomials}
We now delve into the study of mixed multiple orthogonal polynomials in two real variables. To this end, we consider a matrix of measures of size \( q \times p \), defined on a compact set \( \Delta \subset \mathbb{R}^2 \), and assume that each measure is of bounded variation on \( \Delta \). Our approach follows the results presented in \cite{aim} and \cite{BTP}. Many of the properties we will discuss were previously established for the univariate case.


\begin{Definition}
    The Lebesgue–Stieltjes matrix associated with the measure matrix is defined as
    \begin{equation*}
        \d \mu (\boldsymbol{x}) = \begin{bNiceMatrix}
            \d \mu_{1,1}(\boldsymbol{x}) & \Cdots & \d \mu_{1,p}(\boldsymbol{x}) \\
            \Vdots & & \Vdots \\
            \d \mu_{q,1}(\boldsymbol{x}) & \Cdots & \d \mu_{q,p}(\boldsymbol{x})
        \end{bNiceMatrix},
    \end{equation*}
    with support on \( \Delta \).
\end{Definition}

Using the measure matrix, we construct the moment matrix, whose entries consist of integrals of monomials in \( x_1 \) and \( x_2 \) with respect to the components of \( \d\mu(x_1,x_2) \). To ensure a robust analytical framework, we assume all such integrals are finite.

\begin{Definition}
    The moment matrix is given by
    \begin{align*}
        \mathscr{M} & = \int_\Delta X_{[q]}(\boldsymbol{x}) \d \mu (\boldsymbol{x}) X_{[p]}^\top(\boldsymbol{x}) \\
        & =\left[ \begin{NiceMatrix}
            \int_\Delta \d \mu (\boldsymbol{x}) & \int_\Delta  x_1 \d \mu (\boldsymbol{x}) & \int_\Delta  x_2 \d \mu (\boldsymbol{x}) & \int_\Delta  x_1^2 \d \mu (\boldsymbol{x}) & \int_\Delta  x_1x_2 \d \mu (\boldsymbol{x}) & \int_\Delta  x_2^2 \d \mu (\boldsymbol{x}) & \Cdots \\[2pt]
            \int_\Delta x_1\d \mu (\boldsymbol{x}) & \int_\Delta  x_1^2 \d \mu (\boldsymbol{x}) & \int_\Delta  x_1x_2 \d \mu (\boldsymbol{x}) & \int_\Delta  x_1^3 \d \mu (\boldsymbol{x}) & \int_\Delta  x_1^2x_2 \d \mu (\boldsymbol{x}) & \int_\Delta  x_1x_2^2 \d \mu (\boldsymbol{x}) & \Cdots \\[2pt]
            \int_\Delta x_2 \d \mu (\boldsymbol{x}) & \int_\Delta  x_1x_2 \d \mu (\boldsymbol{x}) & \int_\Delta  x_2^2 \d \mu (\boldsymbol{x}) & \int_\Delta  x_1^2x_2 \d \mu (\boldsymbol{x}) & \int_\Delta  x_1x_2^2 \d \mu (\boldsymbol{x}) & \int_\Delta  x_2^3 \d \mu (\boldsymbol{x}) & \Cdots \\
            \Vdots[shorten-end=3pt] &  \Vdots[shorten-end=3pt] &  \Vdots[shorten-end=3pt]  &  \Vdots[shorten-end=3pt]  &  \Vdots[shorten-end=3pt] &  \Vdots[shorten-end=3pt]  & \\\\
        \end{NiceMatrix}\right].
    \end{align*}
    Each entry \( \int_\Delta x_1^{i-j}x_2^j \d \mu (\boldsymbol{x}) \) is a block matrix of size \( q \times p \).
\end{Definition}

\begin{Proposition}
    For two pairs of natural numbers, \( I \sim (i,j) \) and \( K \sim (k,l) \), the moment matrix entries satisfy
    \begin{equation*}
        \mathscr{M}_{(I,K)} = \int_\Delta x_1^{i-j}x_2^j \d \mu(\boldsymbol{x}) x_1^{k-l}x_2^l.
    \end{equation*}
    It follows that \( \mathscr{M}_{(I,K)} = \mathscr{M}_{(K,I)} \), so the moment matrix is block-wise symmetric.
\end{Proposition}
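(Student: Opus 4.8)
The proof is essentially an unwinding of definitions, so the plan is to compute both sides of the claimed identity directly from the definition of the moment matrix and then read off the symmetry.

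First I would recall that $\mathscr{M} = \int_\Delta X_{[q]}(\boldsymbol{x})\, \d\mu(\boldsymbol{x})\, X_{[p]}^\top(\boldsymbol{x})$, where $X_{[q]}(\boldsymbol{x})$ is the semi-infinite column of $q\times q$ monomial blocks $x_1^{i-j}x_2^j I_q$ ordered along the graded-lexicographic sequence $s$, and similarly $X_{[p]}(\boldsymbol{x})$ with $I_p$. For $I\sim(i,j)$ the $I$-th block of $X_{[q]}(\boldsymbol{x})$ is $x_1^{i-j}x_2^j I_q$ (using the paper's notation $x=x_1$, $y=x_2$), and for $K\sim(k,l)$ the $K$-th block of $X_{[p]}^\top(\boldsymbol{x})$ is $x_1^{k-l}x_2^l I_p$ sitting in the $K$-th block-column. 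Hence the $(I,K)$ block of the product $X_{[q]}(\boldsymbol{x})\,\d\mu(\boldsymbol{x})\,X_{[p]}^\top(\boldsymbol{x})$ is $x_1^{i-j}x_2^j I_q \cdot \d\mu(\boldsymbol{x}) \cdot x_1^{k-l}x_2^l I_p = x^{i-j}y^j\,\d\mu(\boldsymbol{x})\, x^{k-l}y^l$, since $I_q$ and $I_p$ act as identities on the left and right of the $q\times p$ matrix $\d\mu$. Integrating over $\Delta$ block-by-block (the integrals are finite by assumption) yields the stated formula
\[
\mathscr{M}_{(I,K)} = \int_\Delta x^{i-j}y^j\, \d\mu(\boldsymbol{x})\, x^{k-l}y^l.
\]

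For the symmetry statement I would observe that the integrand is a scalar-multiple of the matrix $\d\mu(\boldsymbol{x})$: the factors $x^{i-j}y^j$ and $x^{k-l}y^l$ are scalars, so they commute, and
\[
x^{i-j}y^j\, \d\mu(\boldsymbol{x})\, x^{k-l}y^l = x^{k-l}y^l\, \d\mu(\boldsymbol{x})\, x^{i-j}y^j,
\]
which is precisely the integrand defining $\mathscr{M}_{(K,I)}$. Integrating gives $\mathscr{M}_{(I,K)} = \mathscr{M}_{(K,I)}$, the asserted block-wise symmetry (note this is a genuine equality of $q\times p$ blocks, not a transpose, because the scalar monomials multiply $\d\mu$ from both sides without transposing it).

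There is essentially no obstacle here: the only points requiring a line of care are (i) confirming that the $I$-th block of $X_{[r]}(\boldsymbol{x})$ really is $x_1^{i-j}x_2^j I_r$ for $I\sim(i,j)$, which is immediate from the definition of $X_{[r]}$ together with the bijection $I \sim (i,j)$ of the earlier Proposition, and (ii) being careful that the product $X_{[q]}\,\d\mu\,X_{[p]}^\top$ is formed with the $\top$ acting on the $p$-monomial vector only (so block-columns are indexed by $K$), and that block matrix multiplication distributes over the semi-infinite sums entrywise — a formal manipulation justified by the finiteness of all moment integrals. I would keep the write-up to these few lines.
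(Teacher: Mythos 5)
Your proof is correct and is precisely the definition-unwinding argument the paper intends: the paper states this Proposition without proof, treating it as immediate from the displayed form of $\mathscr{M}$, and your computation of the $(I,K)$ block together with the observation that the scalar monomials commute past $\d\mu(\boldsymbol{x})$ (so the symmetry is a genuine block equality, not a transpose) is exactly the omitted reasoning.
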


If all leading principal minors of \( \mathscr{M} \) are nonzero, then the matrix admits a Gauss–Borel (LU) factorization: \( \mathscr{M} = \mathscr{L}^{-1} \mathscr{U}^{-1} \), where \( \mathscr{L} \) and \( \mathscr{U} \) are lower and upper triangular matrices, respectively. This factorization is not unique. For instance, if \( \mathscr{D} \) is an invertible diagonal matrix, then \( \tilde{\mathscr{L}} = \mathscr{L}^{-1} \mathscr{D}^{-1} \) and \( \tilde{\mathscr{U}} = \mathscr{D}\mathscr{U}^{-1} \) also yield a valid factorization.

By requiring that \( S \) and \( \bar{S} \) be lower unitriangular matrices (with ones in the main diagonal) and \( H \) be diagonal and invertible, we obtain a unique Gauss–Borel factorization:
\[
\mathscr{M} = S^{-1} H \bar{S}^{-\top}.
\]
Note that in the classical case of a symmetric moment matrix, \( S = \bar{S} \), and a Cholesky factorization holds. However,   this no longer holds in our extended general setting.

Using the moment matrix definition, we derive:
\begin{align*}
    \int_\Delta X_{[q]}(\boldsymbol{x}) \d \mu (\boldsymbol{x}) X_{[p]}^\top(\boldsymbol{x}) & = S^{-1} H \bar{S}^{-\top}, \\
    \int_\Delta S X_{[q]}(\boldsymbol{x}) \d \mu (\boldsymbol{x}) X_{[p]}^\top(\boldsymbol{x}) \bar{S}^\top & = H.
\end{align*}

This factorization provides a natural setting for defining orthogonal polynomials.

\begin{Definition} \label{Def: OrthogonalPoly}
    Define two families of orthogonal polynomials:
    \begin{align*}
        B(\boldsymbol{x}) & = H^{-1}S X_{[q]}(\boldsymbol{x}) = \begin{bNiceMatrix}
            B_{(0,0)}(\boldsymbol{x}) \\ B_{(1,0)}(\boldsymbol{x}) \\ B_{(1,1)}(\boldsymbol{x}) \\ \Vdots[shorten-end=5pt]
        \end{bNiceMatrix} = \begin{bNiceMatrix}
            B_0^{(1)}(\boldsymbol{x}) & \Cdots & B_0^{(q)}(\boldsymbol{x}) \\[3pt]
            B_1^{(1)}(\boldsymbol{x}) & \Cdots & B_1^{(q)}(\boldsymbol{x}) \\[3pt]
            B_2^{(1)}(\boldsymbol{x}) & \Cdots & B_2^{(q)}(\boldsymbol{x}) \\
            \Vdots[shorten-end=5pt] & & \Vdots[shorten-end=5pt]
        \end{bNiceMatrix}, \\
        A(\boldsymbol{x}) & = X_{[p]}^\top(\boldsymbol{x}) \bar{S}^\top = \begin{bNiceMatrix}
            A_{(0,0)}(\boldsymbol{x}) & A_{(1,0)}(\boldsymbol{x}) & A_{(1,1)}(\boldsymbol{x}) & \Cdots[shorten-end=5pt]
        \end{bNiceMatrix} = \begin{bNiceMatrix}
            A_0^{(1)}(\boldsymbol{x}) & A_1^{(1)}(\boldsymbol{x}) & A_2^{(1)}(\boldsymbol{x}) & \Cdots[shorten-end=5pt] \\
            \Vdots & \Vdots & \Vdots \\
            A_0^{(p)}(\boldsymbol{x}) & A_1^{(p)}(\boldsymbol{x}) & A_2^{(p)}(\boldsymbol{x}) & \Cdots[shorten-end=5pt]
        \end{bNiceMatrix},
    \end{align*}
    where \( B_{(n,l)}(\boldsymbol{x}) \) and \( A_{(n,l)}(\boldsymbol{x}) \) are block matrices of size \( q \times q \) and \( p \times p \), respectively. The notation \( B_n^{(b)}(\boldsymbol{x}) \) and \( A_n^{(a)}(\boldsymbol{x}) \) refers to the individual entries.
\end{Definition}

\begin{Proposition}
    The entries of the matrices \( B_{(n,l)}(\boldsymbol{x}) \) and \( A_{(n,l)}(\boldsymbol{x}) \) are given by
    \begin{align*}
        B_{(n,l)}(\boldsymbol{x}) & = \begin{bNiceMatrix}
            B_{Nq}^{(1)}(\boldsymbol{x}) & \Cdots & B_{Nq}^{(q)}(\boldsymbol{x}) \\
            \Vdots & & \Vdots \\
            B_{Nq+q-1}^{(1)}(\boldsymbol{x}) & \Cdots & B_{Nq+q-1}^{(q)}(\boldsymbol{x})
        \end{bNiceMatrix}, &
        A_{(n,l)}(\boldsymbol{x}) & = \begin{bNiceMatrix}
            A_{Np}^{(1)}(\boldsymbol{x}) & \Cdots & A_{Np+p-1}^{(1)}(\boldsymbol{x}) \\
            \Vdots & & \Vdots \\
            A_{Np}^{(p)}(\boldsymbol{x}) & \Cdots & A_{Np+p-1}^{(p)}(\boldsymbol{x})
        \end{bNiceMatrix},
    \end{align*}
    where \( N \sim (n,l) \).
\end{Proposition}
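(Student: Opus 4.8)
The plan is to read this Proposition as a purely combinatorial unpacking of Definition~\ref{Def: OrthogonalPoly}, reconciling its two descriptions of $B(\boldsymbol{x})$ and $A(\boldsymbol{x})$: the block labelling by pairs $(n,l)$, equivalently by the position $N\sim(n,l)$ of $(n,l)$ in the graded-lexicographical sequence $s$, and the scalar entrywise labelling by $B_m^{(b)}(\boldsymbol{x})$ and $A_m^{(a)}(\boldsymbol{x})$. No information about the measure matrix $\d\mu$ is used: the only ingredients are the block conventions fixed earlier and the position formula $N=\frac{n(n+1)}{2}+l$ valid for $N\sim(n,l)$.

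First I would recall that $B(\boldsymbol{x})=H^{-1}SX_{[q]}(\boldsymbol{x})$ is a semi-infinite $\infty\times q$ matrix whose rows are indexed by $\N_0$, with scalar $(m,b)$ entry equal to $B_m^{(b)}(\boldsymbol{x})$. In Definition~\ref{Def: OrthogonalPoly} the block-column form of $B(\boldsymbol{x})$ stacks the $q\times q$ blocks $B_{(n,l)}(\boldsymbol{x})$ vertically in the graded-lexicographical order of the pairs $(n,l)$, so the block labelled $(n,l)$ is the $N$-th block, $N\sim(n,l)$. Since each of the $N$ preceding blocks contributes exactly $q$ scalar rows, the block $B_{(n,l)}(\boldsymbol{x})$ occupies the scalar rows $Nq,\,Nq+1,\,\dots,\,Nq+q-1$ and all $q$ columns. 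Transcribing those rows in the entrywise labelling yields
\[
B_{(n,l)}(\boldsymbol{x})=\begin{bNiceMatrix}
B_{Nq}^{(1)}(\boldsymbol{x}) & \Cdots & B_{Nq}^{(q)}(\boldsymbol{x})\\
\Vdots & & \Vdots\\
B_{Nq+q-1}^{(1)}(\boldsymbol{x}) & \Cdots & B_{Nq+q-1}^{(q)}(\boldsymbol{x})
\end{bNiceMatrix},
\]
which is the first claimed identity.

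Then I would run the transposed argument for $A(\boldsymbol{x})=X_{[p]}^\top(\boldsymbol{x})\bar{S}^\top$, a $p\times\infty$ matrix with columns indexed by $\N_0$ and scalar $(a,m)$ entry $A_m^{(a)}(\boldsymbol{x})$. Its block-row form places the $p\times p$ blocks $A_{(n,l)}(\boldsymbol{x})$ side by side in graded-lexicographical order, so the block labelled $(n,l)$ is again the $N$-th one, $N\sim(n,l)$; since each preceding block contributes exactly $p$ scalar columns, it occupies the scalar columns $Np,\,Np+1,\,\dots,\,Np+p-1$ and all $p$ rows, and transcribing these gives the stated matrix for $A_{(n,l)}(\boldsymbol{x})$. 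I do not expect a genuine obstacle here: the only points demanding care are keeping the two orientations straight ($B$ a block column of $q$-row blocks, $A$ a block row of $p$-column blocks) and invoking the position formula $N=\frac{n(n+1)}{2}+l$ so that the phrase ``the $N$-th block'' is unambiguous; everything else is direct substitution.
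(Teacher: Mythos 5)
Your argument is correct and is exactly the bookkeeping the paper relies on: the paper states this Proposition without proof as an immediate consequence of Definition~\ref{Def: OrthogonalPoly}, and your unpacking (the $N$-th block, $N\sim(n,l)$, occupies scalar rows $Nq,\dots,Nq+q-1$ of $B$, resp.\ columns $Np,\dots,Np+p-1$ of $A$) simply makes that omitted step explicit.
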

\begin{Definition}
	We refer to the total degree, $\tdeg$, of a bivariate polynomial as the maximum sum of the exponents of the variables $x_1$ and $x_2$ in any of its monomial terms, i.e., $\tdeg x^{n-l}y^l = n$.
\end{Definition}
\begin{Definition}
    We refer to the graded lexicographic degree, grlex-degree, of a bivariate polynomial using the pair \( (n,l) \) associated with $x^{n-l}y^l$, i.e., $\ldeg x^{n-l} y^l = (n,l)$. Alternatively, we can refer to the graded lexicographic position, $\lpos$, by a scalar $N$. Both definitions are related by: \( N \sim (n,l) \). The preceding grlex-position is denoted by \( N - 1 \). However, there are two possible pairs of numbers associated with \( N - 1 \): either \( (n, l - 1) \), if $l \neq 0$, or \( (n - 1, n - 1) \), when $l=0$. When there is no risk of ambiguity, we will generically represent the preceding grlex-degree as \( (n, l - 1) \).
\end{Definition}
\begin{Remark}
	The graded lexicographic degree satisfies the property that the degree of the product of two bivariate polynomials equals the sum of their individual degrees. However, this property does not hold for the grlex-position. With this in mind—and when no confusion arises—we will refer to both the degree and the position interchangeably.
\end{Remark}

\begin{Proposition}[Graded lexicographic degree structure of orthogonal polynomials] \label{Prop: DegreeStructure}
    The matrix polynomials \( B_{(n,l)}(\boldsymbol{x}) \) and \( A_{(n,l)}(\boldsymbol{x}) \) have diagonal entries of degree \(N \sim (n,l) \), corresponding to \( x^{n-l} y^l \). The diagonal entries of \( A(\boldsymbol{x}) \) are monic, whereas those of \( B(\boldsymbol{x}) \) are non-zero but not necessarily monic. 

    For \( B_{(n,l)}(\boldsymbol{x}) \), entries below the diagonal have degree at most $N\sim (n,l)$, i.e., \( x^{n-l}y^l \), and entries above the diagonal have degree at most $N-1\sim (n,l-1)$, i.e., \( x^{n-l+1}y^{l-1} \). Conversely, for \( A_{(n,l)}(\boldsymbol{x}) \), the higher degree terms lie above the diagonal, and terms below the diagonal are of degree at most $N-1\sim (n,l-1)$, i.e., \( x^{n-l+1}y^{l-1} \).

    Entry-wise, the grlex-position satisfies:
    \begin{align*}
        \lpos B_n^{(b)}(\boldsymbol{x}) & \leq \left\lceil \frac{n+2-b}{q} \right\rceil - 1, &
        \lpos A_n^{(a)}(\boldsymbol{x}) & \leq \left\lceil \frac{n+2-a}{p} \right\rceil - 1.
    \end{align*}
    Equality holds when $n$ can be written as, \( n = Mp + a - 1 \) for \( a \in \{1,\dots,p\} \), and \( n = Mq + b - 1 \) for \( b \in \{1,\dots,q\} \), with \( M \in \mathbb{N}_0 \). Once the grlex-position is known, the degree can be obtained by the bijective relation denoted as $\sim$.
\end{Proposition}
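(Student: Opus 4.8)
The statement is read off directly from the definitions $B(\boldsymbol{x})=H^{-1}SX_{[q]}(\boldsymbol{x})$ and $A(\boldsymbol{x})=X_{[p]}^{\top}(\boldsymbol{x})\bar S^{\top}$, together with the triangular shape imposed by the unique Gauss--Borel factorization: $S$ and $\bar S$ are lower unitriangular and $H$ is diagonal and invertible. The plan is to treat first the scalar entries $B_{m}^{(b)}(\boldsymbol{x})$ and $A_{m}^{(a)}(\boldsymbol{x})$ --- I write $m$ for the linear index, to avoid clashing with the pair $(n,l)$ --- and then to deduce the block statements by locating where the diagonal and the off-diagonal entries of $B_{(n,l)}$ and $A_{(n,l)}$ sit inside the semi-infinite matrices.

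\emph{Scalar entries.} In $X_{[q]}(\boldsymbol{x})$ the only nonzero entries of column $b$ are those in rows whose index is congruent to $b-1$ modulo $q$, where the entry in block-row $K\sim(\kappa,\lambda)$ equals $x_{1}^{\kappa-\lambda}x_{2}^{\lambda}$. Since $H^{-1}S$ is lower triangular, $B_{m}^{(b)}(\boldsymbol{x})$ is a linear combination $\sum h_{m}^{-1}S_{mk}\,x_{1}^{\kappa_{k}-\lambda_{k}}x_{2}^{\lambda_{k}}$ over the indices $k\le m$ with $k=K_{k}q+b-1$ and $K_{k}\sim(\kappa_{k},\lambda_{k})$. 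The monomials appearing in this sum are pairwise distinct --- they are parametrized by the block position $K_{k}$ --- so no cancellation of the top term can occur, and
\[
\lpos B_{m}^{(b)}(\boldsymbol{x}) \le \Big\lfloor \frac{m-b+1}{q} \Big\rfloor = \Big\lceil \frac{m+2-b}{q} \Big\rceil - 1 ,
\]
the equality of the two right-hand sides being the elementary identity $\lceil (t+1)/q\rceil-1=\lfloor t/q\rfloor$ for integers $t\ge0$. The extremal block position $\lfloor (m-b+1)/q\rfloor$ is attained precisely when $m\equiv b-1$ modulo $q$, that is $m=Mq+b-1$: then the top index is $k=m$ itself, with coefficient $h_{m}^{-1}S_{mm}=h_{m}^{-1}\neq0$, which proves the equality case and shows the corresponding leading coefficient is nonzero but in general not $1$. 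Everything is identical for $A$ with $q$ replaced by $p$, except that the analogous top coefficient is $\bar S_{mm}=1$, so those extremal entries of $A$ are monic.

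\emph{From entries to blocks.} Writing $N\sim(n,l)$, the block $B_{(n,l)}$ occupies rows $Nq,\dots,Nq+q-1$, so its $(j,b)$ entry is $B_{Nq+j-1}^{(b)}$; substituting $m=Nq+j-1$ into the bound above gives $\lpos B_{Nq+j-1}^{(b)}\le N+\lfloor (j-b)/q\rfloor$, which equals $N$ when $j\ge b$ --- in particular it equals $N$, with grlex-degree $(n,l)$ and a nonzero (not necessarily $1$) leading coefficient, when $j=b$ --- and equals $N-1$ when $j<b$. This is exactly the claimed below/above-diagonal alternative, with $N-1\sim(n,l-1)$, or $N-1\sim(n-1,n-1)$ when $l=0$, in accordance with the convention on the preceding grlex-degree. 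Symmetrically, $A_{(n,l)}$ occupies columns $Np,\dots,Np+p-1$, its $(a,j)$ entry is $A_{Np+j-1}^{(a)}$, and $\lpos A_{Np+j-1}^{(a)}\le N+\lfloor (j-a)/p\rfloor$, which equals $N$ for $j\ge a$ (whence the monic diagonal and the fact that the higher-degree terms lie above it) and $N-1$ for $j<a$ (the sub-diagonal entries).

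\emph{Where the work is.} There is no genuine obstacle here; the content is just triangularity plus the combinatorics of the graded-lexicographic indexing. The delicate part is the bookkeeping: translating a linear index $m$ into the pair (block position $N$, in-block offset), then into the grlex pair $(n,l)$ through the position--pair bijection $I\sim(i,j)$, while carefully tracking the residues of the several indices modulo $q$ and $p$; and checking the boundary cases, namely $m<b-1$ (where $B_{m}^{(b)}$ vanishes, consistently with a ``degree'' $\le N-1=-1$) and $l=0$ (where the preceding grlex-degree wraps around to $(n-1,n-1)$). Once those floor/ceiling manipulations are in place, each assertion of the proposition reduces to one of the two displayed inequalities.
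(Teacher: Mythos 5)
Your proof is correct and covers every assertion of the proposition, but it is organized in the opposite direction from the paper's and is more self-contained. The paper first establishes the block-level claims by writing \(B=\mathscr{L}X_{[q]}\) with \(\mathscr{L}=H^{-1}S\) block lower triangular and observing that the leading matrix coefficient of \(B_{(n,l)}\) is the invertible lower-triangular diagonal block \(\mathscr{L}_{(N,N)}\) (respectively the upper-unitriangular \(\mathscr{U}_{(N,N)}\) for \(A\)), which immediately yields the on/below-diagonal versus above-diagonal dichotomy; it then obtains the entry-wise grlex-position bounds not by direct computation but by citing the univariate \emph{normal} degree bounds of \cite{BTP} and identifying normal degrees with graded-lexicographic positions. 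You instead prove the scalar bounds from scratch — using the sparsity pattern of the columns of \(X_{[q]}(\boldsymbol{x})\) (nonzero entries of column \(b\) only in rows \(\equiv b-1 \pmod q\)), the triangularity of \(H^{-1}S\) and \(\bar S\), and the observation that distinct block rows carry distinct monomials so the top term cannot cancel — and then read off the block statements by substituting \(m=Nq+j-1\) and evaluating \(\lfloor (j-b)/q\rfloor\). What your route buys is a fully self-contained argument that does not lean on the external univariate result and that makes the equality cases \(m=Mq+b-1\), with leading coefficients \(h_m^{-1}\) and \(\bar S_{mm}=1\), completely explicit; what the paper's route buys is brevity and the conceptually clean identification of the leading matrix coefficient with the triangular diagonal block of the factorization. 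Both arguments ultimately rest on the same two facts: triangularity of the Gauss--Borel factors and the graded-lexicographic bookkeeping.
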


\begin{proof}
    The first part follows directly from the definition. For the \( B(\boldsymbol{x}) \) polynomials, write \( \mathscr{L} = H^{-1}S \), which is an invertible lower triangular matrix. We compute:
    \begin{align*}
        B(\boldsymbol{x}) = \begin{bNiceMatrix}
               \mathscr{L}_{(0,0)} & 0 & 0 & 0 & \Cdots[shorten-end=-5pt] &\phantom{i}  \\
             \mathscr{L}_{(1,0)}&    \mathscr{L}_{(1,1)} & 0 & 0 & \Cdots[shorten-end=-5pt] &\phantom{i}  \\
              \mathscr{L}_{(2,0)}&    \mathscr{L}_{(2,1)}&    \mathscr{L}_{(2,2)} & 0 & \Cdots[shorten-end=-5pt] &\phantom{i}  \\
            \Vdots[shorten-end=-8pt] & \Vdots[shorten-end=-8pt]  & \Vdots[shorten-end=-8pt]  & \Ddots[shorten-end=-17pt]  & \Ddots[shorten-end=-5pt] &\phantom{i}  
            \\
            \phantom{i}   &         \phantom{i}   &         \phantom{i}   &         \phantom{i}    &         \phantom{i}   &\phantom{i}  
        \end{bNiceMatrix}
        \begin{bNiceMatrix}
            I_q \\ xI_q \\ yI_q \\ \Vdots[shorten-end=5pt]
        \end{bNiceMatrix}
        = \begin{bNiceMatrix}
            \mathscr{L}_{(0,0)} \\
            x\mathscr{L}_{(1,1)} + \mathscr{L}_{(1,0)} \\
            y\mathscr{L}_{(2,2)} + x\mathscr{L}_{(2,1)} + \mathscr{L}_{(2,0)} \\
            \Vdots[shorten-end=3pt]
        \end{bNiceMatrix}.    \end{align*}
Here $\mathscr{L}_{(I,K)}\in\R^{q\times q} $  and $\mathscr{L}_{(I,I)}$  are invertible and lower triangular. The leading matrix coeficient of $B_{(i,j)}(\boldsymbol{x})$ is $\mathscr{L}_{(I,I)}$, with $I \sim (i,j)$. Therefore,  the grlex-degree structure follows. A similar argument follows for $A(\boldsymbol x)$, but now with $   \mathscr{U}_{(I,K)}\in\R^{p\times p} $  and $   \mathscr{U}_{(I,I)}$  invertible and upper unitriangular.

    The \emph{normal} degree bounds for the individual entries were previously established in \cite{BTP} for the univariate case. The extension to two variables follows by identifying the \emph{normal} degrees in the univariate case with the corresponding graded lexicographic positions. Lastly, the sequence introduced in Definition \ref{Def: Sequence 1} determines the grlex-degree, which is the pair of numbers whose position in the sequence is given by $\lpos$.
\end{proof}
Let us introduce additional notation for the two families of orthogonal polynomials, consistent with Definition \ref{Def: Decomposition2}.

\begin{Definition}
We define the following decomposition for the two families of orthogonal polynomials:
\begin{align*}
    B_{[n]}(\boldsymbol{x}) & = \begin{bNiceMatrix}
        B_{(n,0)}(\boldsymbol{x}) \\ \Vdots \\ B_{(n,n)}(\boldsymbol{x})
    \end{bNiceMatrix}\in\R^{(n+1)q\times q}(\boldsymbol x), & 
    A_{[n]}(\boldsymbol{x}) & = \begin{bNiceMatrix}
        A_{(n,0)}(\boldsymbol{x}) & \Cdots & A_{(n,n)}(\boldsymbol{x})
    \end{bNiceMatrix}\in\R^{p\times (n+1)p}(\boldsymbol x).
\end{align*}
These are block matrices whose diagonal entries contain leading polynomial terms ranging from \( x^n \) in the first block to \( y^n \) in the last.
\end{Definition}
We have
\[
    \begin{bNiceMatrix}
        B_{[0]} (\boldsymbol{x}) \\ 
        B_{[1]} (\boldsymbol{x}) \\ 
        B_{[2]} (\boldsymbol{x}) \\ 
        \Vdots[shorten-end=5pt]
    \end{bNiceMatrix} = 
\begin{bNiceMatrix}
	\mathscr{L}_{[0,0]} & 0 & 0 & 0 & \Cdots[shorten-end=-5pt] &\phantom{i}  \\
	\mathscr{L}_{[1,0]}&    \mathscr{L}_{[1,1]} & 0 & 0 & \Cdots[shorten-end=-5pt] &\phantom{i}  \\
	\mathscr{L}_{[2,0]}&    \mathscr{L}_{[2,1]}&    \mathscr{L}_{[2,2]} & 0 & \Cdots[shorten-end=-5pt] &\phantom{i}  \\
	\Vdots[shorten-end=-8pt] & \Vdots[shorten-end=-8pt]  & \Vdots[shorten-end=-8pt]  & \Ddots[shorten-end=-15pt]  & \Ddots[shorten-end=-5pt] &\phantom{i}  
	\\
	\phantom{i}   &         \phantom{i}   &         \phantom{i}   &         \phantom{i}    &         \phantom{i}   &\phantom{i}  
\end{bNiceMatrix}
    \begin{bNiceMatrix}
        I_q \\ xI_q \\ yI_q \\ x^2 I_q \\ xy I_q \\ y^2 I_q \\ \Vdots[shorten-end=5pt] 
    \end{bNiceMatrix} 
    = 
    \begin{bNiceMatrix}
        \mathscr{L}_{[0,0]} \\[4pt] 
        \mathscr{L}_{[1,1]}\left[\begin{array}{c}
        xI_q \\ yI_q 
        \end{array} \right] + \mathscr{L}_{[1,0]} \\[4pt]
        \mathscr{L}_{[2,2]}\left[\begin{array}{c}
        x^2I_q \\ xyI_q \\ y^2 I_q 
        \end{array} \right] + \mathscr{L}_{[2,1]}\left[\begin{array}{c}
        xI_q \\ yI_q 
        \end{array} \right] + \mathscr{L}_{[2,0]} \\ 
        \Vdots[shorten=70pt][shorten-end=3pt] 
    \end{bNiceMatrix},
\]  
where $\mathscr L_{[i,j]}\in\R^{(i+1)q\times (j+1)q}$, and  $\mathscr L_{[i,i]}\in\R^{(i+1)q\times (i+1)q}$ are nonsingular lower triangular matrices, which gives us a block characterization of $B_{[n]}(\boldsymbol{x})$. Equivalently, we would find a similar characterization of $A_{[n]}(\boldsymbol{x})$. We now establish the orthogonality relations satisfied by the matrix polynomial families \( A(\boldsymbol{x}) \) and \( B(\boldsymbol{x}) \).

After all this efforts, many of notational nature given the intricacies of the bivariate multiple of the mixed type nature of the moment matrix, we are able to give both the bivariate multiple orthogonal of mixed type relations satisfied by the polynomials, as well as the corresponding biorthogonal relations they satisfy.

\begin{Proposition}[Orthogonality relations]
The following orthogonality relations hold:
\begin{align*}
    \sum_{a=1}^p \int_\Delta x_1^{k-l}x_2^{l} \d \mu_{b,a}(\boldsymbol{x}) A_{n}^{(a)}(\boldsymbol{x}) & = 0, & 
    K \in \{ 0, \dots , \left\lceil \frac{n+1-b}{q} \right\rceil - 1\}, \quad b \in \{ 1,\dots,q \}, \\
    \sum_{b=1}^q \int_\Delta B_{n}^{(b)}(\boldsymbol{x}) \d \mu_{b,a}(\boldsymbol{x}) x_1^{k-l}x_2^{l} & = 0, & 
    K \in \{ 0, \dots , \left\lceil \frac{n+1-a}{p} \right\rceil - 1\}, \quad a \in \{ 1,\dots,p \},
\end{align*}
where in both cases, \( K \sim (k,l) \). The maximun $K$ matches the grlex-position of the upper bound for $B_{n-1}^{(b)}(\boldsymbol{x})$ and that for $A_{n-1}^{(a)}(\boldsymbol{x})$.
\end{Proposition}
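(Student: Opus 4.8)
The plan is to extract both families of relations directly from the Gauss--Borel factorization $\mathscr{M}=S^{-1}H\bar{S}^{-\top}$, by multiplying it by one of the unitriangular factors on the appropriate side so that the resulting semi-infinite matrix is triangular while its entries are \emph{exactly} the integrals appearing in the statement. Starting from $\mathscr{M}=\int_\Delta X_{[q]}(\boldsymbol{x})\,\d\mu(\boldsymbol{x})\,X_{[p]}^{\top}(\boldsymbol{x})$, the definitions $B(\boldsymbol{x})=H^{-1}SX_{[q]}(\boldsymbol{x})$ and $A(\boldsymbol{x})=X_{[p]}^{\top}(\boldsymbol{x})\bar{S}^{\top}$, together with $\bar{S}^{-\top}\bar{S}^{\top}=\I$ and $SS^{-1}=\I$, I would first establish the two identities
\begin{align*}
\int_\Delta X_{[q]}(\boldsymbol{x})\,\d\mu(\boldsymbol{x})\,A(\boldsymbol{x}) &= \mathscr{M}\bar{S}^{\top}=S^{-1}H,\\
\int_\Delta B(\boldsymbol{x})\,\d\mu(\boldsymbol{x})\,X_{[p]}^{\top}(\boldsymbol{x}) &= H^{-1}S\mathscr{M}=\bar{S}^{-\top}.
\end{align*}
Since $S^{-1}H$ is lower triangular and $\bar{S}^{-\top}$ is upper triangular, everything then reduces to comparing the positions of entries.

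For the first identity I would unwind the block-to-scalar index dictionary. The block row of $X_{[q]}(\boldsymbol{x})$ labelled by $K\sim(k,l)$ is $x^{k-l}y^{l}I_q$, so after right multiplication by $\d\mu(\boldsymbol{x})$ its $b$-th scalar row---that is, scalar row $Kq+b-1$ with $b\in\{1,\dots,q\}$---equals $x^{k-l}y^{l}\,(\d\mu_{b,1}(\boldsymbol{x}),\dots,\d\mu_{b,p}(\boldsymbol{x}))$. Pairing this with the $n$-th column of $A(\boldsymbol{x})$, whose entries are $A_n^{(1)}(\boldsymbol{x}),\dots,A_n^{(p)}(\boldsymbol{x})$, and integrating, the $(Kq+b-1,\,n)$ entry of $\int_\Delta X_{[q]}(\boldsymbol{x})\,\d\mu(\boldsymbol{x})\,A(\boldsymbol{x})$ is precisely $\sum_{a=1}^p\int_\Delta x^{k-l}y^{l}\,\d\mu_{b,a}(\boldsymbol{x})\,A_n^{(a)}(\boldsymbol{x})$. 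As this matrix equals the lower triangular $S^{-1}H$, the entry vanishes whenever $n>Kq+b-1$; invoking the elementary equivalence $n>Kq+b-1\iff K\le\left\lceil\frac{n+1-b}{q}\right\rceil-1$, valid for all integers $K$ irrespective of divisibility, yields the claimed range $K\in\{0,\dots,\left\lceil\frac{n+1-b}{q}\right\rceil-1\}$ with $b\in\{1,\dots,q\}$.

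The second identity is treated in the mirror-symmetric manner: the block column of $X_{[p]}^{\top}(\boldsymbol{x})$ labelled by $K\sim(k,l)$ is $x^{k-l}y^{l}I_p$, so the $(n,\,Kp+a-1)$ entry of $\int_\Delta B(\boldsymbol{x})\,\d\mu(\boldsymbol{x})\,X_{[p]}^{\top}(\boldsymbol{x})=\bar{S}^{-\top}$ equals $\sum_{b=1}^q\int_\Delta B_n^{(b)}(\boldsymbol{x})\,\d\mu_{b,a}(\boldsymbol{x})\,x^{k-l}y^{l}$, and upper triangularity forces it to vanish when $n>Kp+a-1$, that is for $K\le\left\lceil\frac{n+1-a}{p}\right\rceil-1$. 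I would close by observing that these two upper bounds are exactly the grlex-positions $\lpos B_{n-1}^{(b)}$ and $\lpos A_{n-1}^{(a)}$ from Proposition~\ref{Prop: DegreeStructure}, and that the nonvanishing of the diagonal entries of $S^{-1}H$ and $\bar{S}^{-\top}$ (because $H$ is invertible) is what makes the range sharp. The main obstacle is purely bookkeeping---correctly threading the $I_r$-block structure of the monomial matrices through the passage from block indices to scalar indices, and the ceiling-function rewriting of the triangularity condition---since there is no analytic subtlety beyond the standing hypothesis that every moment is finite, which is what licenses exchanging the (finite) sums over measure components with the integrals.
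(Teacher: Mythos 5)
Your proposal is correct and follows essentially the same route as the paper: both derive $\int_\Delta X_{[q]}(\boldsymbol{x})\,\d\mu(\boldsymbol{x})\,A(\boldsymbol{x})=S^{-1}H$ and $\int_\Delta B(\boldsymbol{x})\,\d\mu(\boldsymbol{x})\,X_{[p]}^{\top}(\boldsymbol{x})=\bar{S}^{-\top}$ from the Gauss--Borel factorization and read the orthogonality off the triangularity of these matrices. You merely make explicit the scalar-index bookkeeping ($Kq+b-1$, $Kp+a-1$) and the ceiling-function rewriting that the paper leaves implicit.
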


\begin{proof}
These relations follow directly from the Gauss–Borel factorization of the moment matrix and Definition \ref{Def: OrthogonalPoly}. On the one hand, we have:
\begin{equation*}
    \int_{\Delta} X_{[q]}(\boldsymbol{x}) \d \mu (\boldsymbol{x}) A(\boldsymbol{x}) = S^{-1}H,
\end{equation*}
where \( S^{-1}H \) is a lower triangular matrix. Considering the entries above the main diagonal yields the first orthogonality relation.

Similarly, the second relation follows from:
\begin{equation*}
    \int_{\Delta} B(\boldsymbol{x}) \d \mu (\boldsymbol{x}) X_{[p]}^\top(\boldsymbol{x}) = \bar{S}^{-\top},
\end{equation*}
with \( \bar{S}^{-\top} \) being upper unitriangular.
\end{proof}

\begin{Proposition}[Biorthogonality relations]
The following biorthogonality relation is satisfied:
\begin{equation*}
    \int_{\Delta} B(\boldsymbol{x}) \d \mu (\boldsymbol{x}) A(\boldsymbol{x}) = I.
\end{equation*}
In component form, this can be expressed as:
\begin{equation*}
    \sum_{a=1}^p \sum_{b = 1}^q \int_{\Delta} B_m^{(b)}(\boldsymbol{x}) \d \mu_{b,a}(\boldsymbol{x}) A_{n}^{(a)}(\boldsymbol{x}) = \delta_{n,m},
\end{equation*}
where \( \delta_{n,m} \) denotes the Kronecker delta.
\end{Proposition}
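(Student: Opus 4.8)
The plan is to obtain the identity by direct substitution of the defining expressions for $A(\boldsymbol{x})$ and $B(\boldsymbol{x})$ from Definition~\ref{Def: OrthogonalPoly} and then appeal to the Gauss--Borel factorization $\mathscr{M} = S^{-1} H \bar{S}^{-\top}$; no genuinely new input beyond these two facts is needed.

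First I would plug $B(\boldsymbol{x}) = H^{-1} S X_{[q]}(\boldsymbol{x})$ and $A(\boldsymbol{x}) = X_{[p]}^{\top}(\boldsymbol{x}) \bar{S}^{\top}$ into the left-hand side. Since $H$, $S$ and $\bar{S}$ have constant (i.e.\ $\boldsymbol{x}$-independent) entries, they factor out of the integral, so that
\begin{equation*}
\int_{\Delta} B(\boldsymbol{x}) \d \mu (\boldsymbol{x}) A(\boldsymbol{x}) = H^{-1} S \left( \int_{\Delta} X_{[q]}(\boldsymbol{x}) \d \mu (\boldsymbol{x}) X_{[p]}^{\top}(\boldsymbol{x}) \right) \bar{S}^{\top} = H^{-1} S \mathscr{M} \bar{S}^{\top}.
\end{equation*}
Now substituting $\mathscr{M} = S^{-1} H \bar{S}^{-\top}$ and cancelling gives $H^{-1} S S^{-1} H \bar{S}^{-\top} \bar{S}^{\top} = H^{-1} H = I$, which is the matrix form of the claim. (Equivalently, one may quote the already-derived relation $\int_\Delta S X_{[q]} \d\mu \, X_{[p]}^\top \bar{S}^\top = H$ and multiply by $H^{-1}$ on the left.)

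For the component form I would simply read off the scalar $(m,n)$ entry of the semi-infinite product $B(\boldsymbol{x}) \d \mu (\boldsymbol{x}) A(\boldsymbol{x})$: since $B(\boldsymbol{x})$ has $q$ columns, $\d\mu(\boldsymbol{x})$ is $q\times p$, and $A(\boldsymbol{x})$ has $p$ rows, the $m$-th row of $B$ is $(B_m^{(1)},\dots,B_m^{(q)})$ and the $n$-th column of $A$ is $(A_n^{(1)},\dots,A_n^{(p)})^{\top}$ in the notation of Definition~\ref{Def: OrthogonalPoly}, so the $(m,n)$ entry of the product is $\sum_{b=1}^{q}\sum_{a=1}^{p}\int_{\Delta} B_m^{(b)}(\boldsymbol{x}) \d \mu_{b,a}(\boldsymbol{x}) A_n^{(a)}(\boldsymbol{x})$; equating with the $(m,n)$ entry of $I$, namely $\delta_{n,m}$, finishes the proof.

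The only point requiring a word of care is the manipulation of semi-infinite matrices — pulling the constant matrices through the integral sign, and the associativity $H^{-1}S(\mathscr{M}\bar{S}^{\top}) = (H^{-1}S\mathscr{M})\bar{S}^{\top}$. This is harmless here: $H^{-1}S$ is lower triangular and $\bar{S}^{\top}$ upper triangular, so every entry of every product that appears is a finite sum, and all moments were assumed finite, so the rearrangements are legitimate rather than merely formal. Hence I expect essentially no obstacle; all the difficulty was already absorbed into setting up the block bookkeeping of Definitions~\ref{Def: Decomposition2} and~\ref{Def: OrthogonalPoly}.
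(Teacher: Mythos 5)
Your proof is correct and is exactly the argument the paper intends: the paper gives no explicit proof, treating the identity as immediate from the already-displayed relation $\int_\Delta S X_{[q]}(\boldsymbol{x})\,\d\mu(\boldsymbol{x})\,X_{[p]}^{\top}(\boldsymbol{x})\,\bar{S}^{\top}=H$, which is precisely your computation $H^{-1}S\mathscr{M}\bar{S}^{\top}=I$. Your reading of the component form and the remark on the legitimacy of the semi-infinite manipulations are both accurate.
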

\section{Recurrence Relations}
We now study the recurrence matrices and the recurrence relations satisfied by the corresponding families of orthogonal polynomials. Several results will be presented in the general case through illustrative examples, as the underlying arguments remain essentially the same.
\begin{Proposition}[Hankel type symmetry] \label{Prop: MSym-GeneralCase}
	The moment matrix satisfies:
	\begin{equation*}
		\Lambda_{[q];k} \mathscr{M} = \mathscr{M}\Lambda_{[p];k}^\top,
	\end{equation*}
	for all $k \in \{1,2\}$.
\end{Proposition}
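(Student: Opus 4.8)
The plan is to exploit the defining relation of the moment matrix,
\[
\mathscr{M} = \int_\Delta X_{[q]}(\boldsymbol{x})\, \d\mu(\boldsymbol{x})\, X_{[p]}^\top(\boldsymbol{x}),
\]
together with the spectral property of the shift operators, namely
\(\Lambda_{[r];k} X_{[r]}(\boldsymbol{x}) = x_k X_{[r]}(\boldsymbol{x})\), established in the proposition preceding Proposition \ref{Prop: Position of 1s Lambda}. The key point is that multiplication of the column vector \(X_{[q]}(\boldsymbol{x})\) by \(x_k\) on the left can be realized either by applying \(\Lambda_{[q];k}\) to \(X_{[q]}(\boldsymbol{x})\), or by transferring the scalar \(x_k\) through the integral onto the row vector \(X_{[p]}^\top(\boldsymbol{x})\) and rewriting it as \(X_{[p]}^\top(\boldsymbol{x})\,\Lambda_{[p];k}^\top\).

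Concretely, I would argue as follows. Starting from the left-hand side,
\[
\Lambda_{[q];k}\mathscr{M}
= \Lambda_{[q];k}\int_\Delta X_{[q]}(\boldsymbol{x})\, \d\mu(\boldsymbol{x})\, X_{[p]}^\top(\boldsymbol{x})
= \int_\Delta \bigl(\Lambda_{[q];k} X_{[q]}(\boldsymbol{x})\bigr)\, \d\mu(\boldsymbol{x})\, X_{[p]}^\top(\boldsymbol{x}),
\]
where pulling the constant matrix \(\Lambda_{[q];k}\) inside the integral is justified entrywise (each entry of the product is an absolutely convergent integral of monomials against the bounded-variation measures, by the standing finiteness assumption on moments). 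Applying the eigenvalue-like relation gives
\[
\Lambda_{[q];k}\mathscr{M} = \int_\Delta x_k\, X_{[q]}(\boldsymbol{x})\, \d\mu(\boldsymbol{x})\, X_{[p]}^\top(\boldsymbol{x}).
\]
Now the scalar \(x_k\) commutes with everything, so I move it next to \(X_{[p]}^\top(\boldsymbol{x})\) and use the transposed relation \(x_k X_{[p]}^\top(\boldsymbol{x}) = \bigl(\Lambda_{[p];k} X_{[p]}(\boldsymbol{x})\bigr)^\top = X_{[p]}^\top(\boldsymbol{x})\,\Lambda_{[p];k}^\top\). Substituting and pulling the constant matrix \(\Lambda_{[p];k}^\top\) out of the integral on the right yields
\[
\Lambda_{[q];k}\mathscr{M} = \int_\Delta X_{[q]}(\boldsymbol{x})\, \d\mu(\boldsymbol{x})\, X_{[p]}^\top(\boldsymbol{x})\,\Lambda_{[p];k}^\top = \mathscr{M}\,\Lambda_{[p];k}^\top,
\]
which is the claimed identity, valid for \(k \in \{1,2\}\).

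I do not anticipate a genuine obstacle here; the statement is essentially a bookkeeping consequence of the two facts just cited. The only point requiring a modicum of care is the compatibility of block sizes: \(\Lambda_{[q];k}\) acts on the \(q\)-indexed row structure of \(\mathscr{M}\) and \(\Lambda_{[p];k}^\top\) on the \(p\)-indexed column structure, so the products \(\Lambda_{[q];k}\mathscr{M}\) and \(\mathscr{M}\Lambda_{[p];k}^\top\) are well-defined semi-infinite matrices of matching shape, and one should note that the eigenvalue relation for \(X_{[r]}\) holds for each \(r\), in particular for \(r=q\) and \(r=p\) separately. A brief remark that interchanging the (entrywise convergent) integral with multiplication by the fixed sparse matrices \(\Lambda_{[q];k}\), \(\Lambda_{[p];k}^\top\) is legitimate completes the argument.
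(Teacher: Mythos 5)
Your proposal is correct and follows essentially the same route as the paper: insert $\Lambda_{[q];k}$ into the integral, apply the eigenvalue-like relation $\Lambda_{[q];k}X_{[q]}(\boldsymbol{x}) = x_k X_{[q]}(\boldsymbol{x})$, transfer the scalar $x_k$ to $X_{[p]}^\top(\boldsymbol{x})$ via the transposed relation, and extract $\Lambda_{[p];k}^\top$. The additional remarks on interchanging the integral with multiplication by the sparse constant matrices and on block-size compatibility are sound but not needed beyond what the paper already assumes.
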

\begin{proof}
This result follows from:
\[\begin{aligned}
	\Lambda_{[q],k}\mathscr{M} &= \int_\Delta \Lambda_{[q],k}X_{[q]}(\boldsymbol{x}) \, \mathrm{d} \mu (\boldsymbol{x}) X_{[p]}^\top(\boldsymbol{x}) \\&= \int_\Delta x_k X_{[q]}(\boldsymbol{x}) \, \mathrm{d} \mu (\boldsymbol{x}) X_{[p]}^\top(\boldsymbol{x}) \\
	&= \int_\Delta X_{[q]}(\boldsymbol{x}) \, \mathrm{d} \mu (\boldsymbol{x}) X_{[p]}^\top(\boldsymbol{x}) \Lambda_{[p],k}^\top \\&= \mathscr{M}\Lambda_{[p],k}^\top.
\end{aligned}\]
\end{proof}
\begin{Definition} \label{Def: TMatrix-GeneralCase}
	We define the recurrence matrices as
	\begin{equation*}
		T_k \coloneq S \Lambda_{[q];k} S^{-1},
	\end{equation*}
	for each $k \in \{1,2\}$.
\end{Definition}

\begin{Proposition} \label{Prop: TMatrix-StudyCase}
	The recurrence matrix satisfies the following dual relation:
	\begin{equation*}
		T_k = S \Lambda_{[q];k} S^{-1} = H \bar{S}^{-\top} \Lambda_{[p];k}^\top \bar{S}^\top H^{-1}.
	\end{equation*}
\end{Proposition}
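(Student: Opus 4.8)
The first equality is nothing but the definition of $T_k$ (Definition \ref{Def: TMatrix-GeneralCase}), so the real content of the statement is the second equality. The plan is to read it off from the Hankel-type symmetry of Proposition \ref{Prop: MSym-GeneralCase} together with the unique Gauss–Borel factorization $\mathscr{M} = S^{-1} H \bar{S}^{-\top}$.

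First I would substitute the factorization into the symmetry relation $\Lambda_{[q];k}\mathscr{M} = \mathscr{M}\Lambda_{[p];k}^\top$, which gives
\[
\Lambda_{[q];k}\, S^{-1} H \bar{S}^{-\top} = S^{-1} H \bar{S}^{-\top}\, \Lambda_{[p];k}^\top .
\]
Then I would multiply on the left by $S$ and on the right by $\bar{S}^\top H^{-1}$ — all of which are invertible, since $S$ and $\bar S$ are (block) unitriangular and $H$ is diagonal and invertible — to obtain
\[
S \Lambda_{[q];k} S^{-1} = H \bar{S}^{-\top}\, \Lambda_{[p];k}^\top\, \bar{S}^\top H^{-1},
\]
which is exactly the asserted dual expression for $T_k$.

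The only place where a word of care is needed is the legitimacy of these rearrangements in the semi-infinite setting: one must be sure that every matrix product above is well-defined, i.e. that each entry is a finite sum, so that associativity and the regroupings hold. This is guaranteed by the structure of the factors: $S$ and $S^{-1}$ are lower block-unitriangular, $\bar{S}^\top$ and $\bar{S}^{-\top}$ are upper block-unitriangular, $H$ and $H^{-1}$ are block-diagonal, and $\Lambda_{[q];k}$, $\Lambda_{[p];k}^\top$ are growing-band matrices with a single nonzero entry in each row (respectively, column) by Propositions \ref{Prop: Position of 1s Lambda} and \ref{Prop: Position of 1s TLambda}. Hence all the products involved are column-finite (or row-finite) and the manipulations are valid. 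I expect this bookkeeping to be the only, and rather mild, obstacle; the algebra itself is immediate once the Hankel-type symmetry and the factorization are in hand.
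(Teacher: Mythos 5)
Your proposal is correct and follows exactly the route the paper intends: the paper's own proof simply cites Proposition \ref{Prop: MSym-GeneralCase} and Definition \ref{Def: TMatrix-GeneralCase}, and your argument spells out precisely that substitution of the Gauss--Borel factorization into the Hankel-type symmetry followed by multiplication by the invertible triangular and diagonal factors. The additional remark on the well-definedness of the semi-infinite products is a sensible (if unstated in the paper) piece of bookkeeping.
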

\begin{proof}
    It is a direct consequence of Proposition \ref{Prop: MSym-GeneralCase} and Definition \ref{Def: TMatrix-GeneralCase}.
\end{proof}
Let us study the structural properties of $T_k$ that arise from this dual relation. These will be presented using an example and later generalized. 

\textbf{Example:}
    Let us take $q=1$ and $p=2$ and focus on the case $k=1$. Since both $S$ and $S^{-1}$ are unitriangular, and the matrix $\Lambda_{[1];1}$ contains ones above the main diagonal (located in specified rows and distinct columns as given in Proposition \ref{Prop: Position of 1s Lambda} for $r = 1$), the product $S\Lambda_{[1];1}S^{-1}$ preserves the lower unitriangular structure but shifts it upward along the positions determined by the $1$s in $\Lambda_{[1];1}$. Conceptually, each $1$ in $\Lambda_{[1];1}$ casts a “shadow” to the left in the resulting matrix, consisting of constant (possibly zero) entries. Therefore, we deduce:
    \[
        T_1 = S\Lambda_{[1];1}S^{-1} = \begin{bNiceMatrix}
	   \stackrel{\leftarrow}{\ast} & 1 & 0 & 0 & 0 & 0 & 0 & \Cdots[shorten-end=5pt]  \\
	   \ast & \ast & \stackrel{\leftarrow}{\ast} & 1 & 0 & 0 & 0 & \Cdots[shorten-end=5pt]  \\
	   \ast & \ast & \ast & \stackrel{\leftarrow}{\ast} & 1 & 0 & 0 & \Cdots[shorten-end=5pt]  \\
	   \ast & \ast & \ast & \ast & \ast & \stackrel{\leftarrow}{\ast} & 1 &  \\
	   \ast & \ast & \ast & \ast & \ast & \ast & \ast &   \\
	   \Vdots[shorten-end=2pt] & \Vdots[shorten-end=2pt]  & \Vdots[shorten-end=2pt]  & \Vdots[shorten-end=2pt]  & \Vdots[shorten-end=2pt]  & \Vdots[shorten-end=2pt]  & \Vdots[shorten-end=2pt]  & 
        \end{bNiceMatrix},
    \]
    where $\ast$ denotes arbitrary real entries and $\stackrel{\leftarrow}{\ast}$ denotes the projected influence of a $1$-entry.

    From the second expression for $T_1$,
    \[
        T_1 = \left( H^{-1}\bar{S}\Lambda_{[2];1}\bar{S}^{-1}H \right)^\top = H \left( \bar{S}\Lambda_{[2];1}\bar{S}^{-1} \right)^\top H^{-1},
    \]
    we observe that a similar structure arises in the matrix $\bar{S}\Lambda_{[2];1}\bar{S}^{-1}$. The matrices $H$ and $H^{-1}$ transform the unit entry $(n,m)$ by multiplication by the factor \[\frac{H_n}{H_m}. \]Finally, the transpose operation completes the transformation. Thus,
    \[
	T_1 = H \bar{S}^{-\top} \Lambda_{[2];1}^\top \bar{S}^\top H^{-1} = \begin{bNiceMatrix}
		* & * & * & * & * & \Cdots[shorten-end=5pt]  \\
		\astuparrow & \ast & * & * & * & \Cdots[shorten-end=5pt]  \\
		\boxed{\cdot} & \astuparrow & * & * & * & \Cdots[shorten-end=5pt]  \\
		0 & \boxed{\cdot} & * & * & * & \Cdots[shorten-end=5pt]  \\
		0 & 0 & \ast & * & *& \Cdots[shorten-end=5pt]  \\
		0 & 0 & \astuparrow & * & *& \Cdots[shorten-end=5pt]  \\
		0 & 0 & \boxed{\cdot} & \astuparrow & *& \Cdots[shorten-end=5pt]  \\
		0 & 0 & 0 & \boxed{\cdot} & *& \Cdots[shorten-end=5pt]  \\
		\Vdots[shorten-end=2pt] & \Vdots[shorten-end=2pt] & \Vdots[shorten-end=2pt] &  &  &  
	\end{bNiceMatrix},
    \]
    where $\boxed{\cdot}$ denotes non-zero entries derived from the original unit entries. By comparing the two relations, we derive the explicit structure of $T_1$. Note that when a single projection shadow intersects either a 1 or an $\boxed{\cdot}$ entry, it vanishes. Similarly, the intersection of two projection shadows results in their cancellation.

    Finally, we present the  form of $T_1$:
	\[
		T_1=\begin{bNiceMatrix}
		    \boldsymbol  \ast & 1 & 0 & 0 & 0 & 0 & 0 & 0 & 0 & 0 & 0 & 0 & 0 & 0 &  \Cdots[shorten-end=-5pt]&&&&\phantom{i}  \\
		      \ast & \boldsymbol\ast & \ast & 1 & 0 & 0 & 0 & 0 & 0 & 0 & 0 & 0 & 0 & 0 &  \Cdots[shorten-end=-5pt]&&&&\phantom{i}  \\
		      \boxed{\cdot} & \ast & \boldsymbol\ast & \ast & 1 & 0 & 0 & 0 & 0 & 0 & 0 & 0 & 0 & 0 &  \Cdots[shorten-end=-5pt]  &&&&\phantom{i}\\
		      0 & \boxed{\cdot} & \ast &\boldsymbol \ast & \ast & \ast & 1 & 0 & 0 & 0 & 0 & 0 & 0 & 0 &  \Cdots[shorten-end=-5pt] &&&&\phantom{i} \\
		      0 & 0 & \ast & \ast &\boldsymbol \ast & \ast & \ast & 1 & 0 & 0 & 0 & 0 & 0 & 0 &  \Cdots[shorten-end=-5pt]&&&&\phantom{i}   \\
		      0 & 0 & \ast & \ast & \ast & \boldsymbol\ast & \ast & \ast & 1 & 0 & 0 & 0 & 0 & 0 &  \Cdots[shorten-end=-5pt]  &&&&\phantom{i} \\
		      0 & 0 & \boxed{\cdot} & \ast & \ast & \ast &\boldsymbol \ast & \ast & \ast & \ast & 1 & 0 & 0 & 0 &  \Cdots[shorten-end=-4pt] &&&&\phantom{i} \\
		      0 & 0 & 0 & \boxed{\cdot} & \ast & \ast & \ast & \boldsymbol\ast & \ast & \ast & \ast & 1 & 0 & 0 &  \Cdots[shorten-end=-2pt]&&&&\phantom{i}\\
		      0 & 0 & 0 & 0 & \boxed{\cdot} & \ast & \ast & \ast & \boldsymbol\ast & \ast & \ast & \ast & 1 & 0 &  \Cdots[shorten-end=-2pt] &&&&\phantom{i} \\
		      0 & 0 & 0 & 0 & 0 & \boxed{\cdot} & \ast & \ast & \ast & \boldsymbol\ast & \ast & \ast & \ast & 1 &  &\phantom{i} \\
		      0 & 0 & 0 & 0 & 0 & 0 & \ast & \ast & \ast & \ast & \boldsymbol\ast & \ast & \ast & \ast &   &&&&\phantom{i}  \\[-5pt]
		      0 & 0 & 0 & 0 & 0 & 0 & \ast & \ast & \ast & \ast & \ast &\boldsymbol \ast & \ast & \ast &  \Ddots[shorten-end=-5pt]  &&&&\phantom{i}  \\[-5pt]
		      0 & 0 & 0 & 0 & 0 & 0 & \boxed{\cdot} & \ast & \ast & \ast & \ast & \ast & \boldsymbol\ast & \ast &  \Ddots[shorten-end=-5pt]&&&&\phantom{i}  \\[-2pt]
		      0 & 0 & 0 & 0 & 0 & 0 & 0 & \boxed{\cdot} & \ast & \ast & \ast & \ast & \ast & \boldsymbol\ast &  \Ddots[shorten-end=-5pt]  &&&&\phantom{i}\\
		      0 & 0 & 0 & 0 & 0 & 0 & 0 & 0 & \boxed{\cdot} & \ast & \ast & \ast & \ast & \ast &  \Ddots[shorten-end=-5pt] &&&&\phantom{i}\\
		      0 & 0 & 0 & 0 & 0 & 0 & 0 & 0 & 0 & \boxed{\cdot} & \ast & \ast & \ast & \ast & \Ddots[shorten-end=0pt] &&&&\phantom{i} \\
		      0 & 0 & 0 & 0 & 0 & 0 & 0 & 0 & 0 & 0 & \boxed{\cdot} & \ast & \ast & \ast & \Ddots[shorten-end=10pt] &&&&\phantom{i} \\
		      \Vdots[shorten-end=-6pt] & \Vdots[shorten-end=-6pt]  & \Vdots[shorten-end=-6pt]  & \Vdots[shorten-end=-6pt]  & \Vdots[shorten-end=-6pt]  & \Vdots[shorten-end=-6pt]  & \Vdots[shorten-end=-6pt]  & \Vdots[shorten-end=-6pt]  & \Vdots[shorten-end=-6pt]  & \Vdots[shorten-end=-6pt]  &  &  & \Ddots[shorten-end=14pt]  & \Ddots[shorten-end=18pt]  & \Ddots[shorten-end=14pt] &&&&\phantom{i}\\
		      \phantom{i}&\phantom{i}&\phantom{i}&\phantom{i}&\phantom{i}&\phantom{i}&\phantom{i}&\phantom{i}&\phantom{i}&\phantom{i}&\phantom{i}&\phantom{i}&\phantom{i}&\phantom{i}&\phantom{i}&&&&\phantom{i}
	    \end{bNiceMatrix}.
	\]     \enlargethispage{1cm}
    Similar arguments lead to the following recurrence matrix for $k=2$,
	\[
		T_2=\begin{bNiceMatrix}
		\boldsymbol \ast & 1 & 0 & 0 & 0 & 0 & 0 & 0 & 0 & 0 & 0 & 0 & 0 &0& \Cdots[shorten-end=-3pt]&&&\phantom{i}   \\
		\ast & \boldsymbol\ast & \ast & \ast & 1 & 0 & 0 & 0 & 0 & 0 & 0 & 0 & 0 & 0 & \Cdots[shorten-end=-3pt]&&&\phantom{i}  \\
		\ast & \ast &\boldsymbol \ast & \ast & \ast & 1 & 0 & 0 & 0 & 0 & 0 & 0 & 0 & 0 &  \Cdots[shorten-end=-3pt]&&&\phantom{i} \\
		\boxed{\cdot} & \ast & \ast & \boldsymbol\ast & \ast & \ast & \ast & 1 & 0 & 0 & 0 & 0 & 0 & 0 & \Cdots[shorten-end=-3pt]&&&\phantom{i}  \\
		0 & \boxed{\cdot} & \ast & \ast & \boldsymbol\ast & \ast & \ast & \ast & 1 & 0 & 0 & 0 & 0 & 0 & \Cdots[shorten-end=-3pt]&&&\phantom{i}  \\
		0 & 0 & \ast & \ast & \ast & \boldsymbol\ast & \ast & \ast & \ast & 1 & 0 & 0 & 0 & 0 &  \Cdots[shorten-end=-3pt]&&&\phantom{i} \\
		0 & 0 & \ast & \ast & \ast & \ast & \boldsymbol\ast & \ast & \ast & \ast & \ast & 1 & 0 & 0 & \Cdots[shorten-end=-3pt]&&&\phantom{i}  \\
		0 & 0 & \boxed{\cdot} & \ast & \ast & \ast & \ast &\boldsymbol \ast & \ast & \ast & \ast & \ast & 1 & 0 & \Cdots[shorten-end=-3pt]&&&\phantom{i} \\
		0 & 0 & 0 & \boxed{\cdot} & \ast & \ast & \ast & \ast & \boldsymbol\ast & \ast & \ast & \ast & \ast & 1 &   \\
		0 & 0 & 0 & 0 & \boxed{\cdot} & \ast & \ast & \ast & \ast & \boldsymbol\ast & \ast & \ast & \ast & \ast &  &&\phantom{i}  \\[-5pt]
		0 & 0 & 0 & 0 & 0 & \boxed{\cdot} & \ast & \ast & \ast & \ast &\boldsymbol \ast & \ast & \ast & \ast &  \Ddots[shorten-end=-20pt]&&&\phantom{i}  \\[-5pt]
		0 & 0 & 0 & 0 & 0 & 0 & \ast & \ast & \ast & \ast & \ast & \boldsymbol\ast & \ast & \ast &  \Ddots[shorten-end=-3pt]&&&\phantom{i}   \\[-5pt]
		0 & 0 & 0 & 0 & 0 & 0 & \ast & \ast & \ast & \ast & \ast & \ast &\boldsymbol \ast & \ast &  \Ddots[shorten-end=-3pt] &&\phantom{i}  \\[-5pt]
		0 & 0 & 0 & 0 & 0 & 0 & \boxed{\cdot} & \ast & \ast & \ast & \ast & \ast & \ast & \boldsymbol\ast & \Ddots[shorten-end=-20pt]  &&&\phantom{i} \\
		0 & 0 & 0 & 0 & 0 & 0 & 0 & \boxed{\cdot} & \ast & \ast & \ast & \ast & \ast & \ast &  \Ddots[shorten-end=-3pt]  \\
		0 & 0 & 0 & 0 & 0 & 0 & 0 & 0 & \boxed{\cdot} & \ast & \ast & \ast & \ast & \ast &  \Ddots[shorten-end=-0pt] &&\phantom{i}  \\
		0 & 0 & 0 & 0 & 0 & 0 & 0 & 0 & 0 & \boxed{\cdot} & \ast & \ast & \ast & \ast & \Ddots[shorten-end=0pt]  &&&\phantom{i} \\
		0 & 0 & 0 & 0 & 0 & 0 & 0 & 0 & 0 & 0 & \boxed{\cdot} & \ast & \ast & \ast & \Ddots[shorten-end=-20pt]  &&&\phantom{i} \\
	\\
	\Vdots[shorten-end=-10pt] & \Vdots[shorten-end=-10pt]  & \Vdots[shorten-end=-10pt]  & \Vdots[shorten-end=-10pt]  & \Vdots[shorten-end=-10pt]  & \Vdots[shorten-end=-10pt]  & \Vdots[shorten-end=-10pt]  & \Vdots[shorten-end=-10pt]  & \Vdots[shorten-end=-10pt]  & \Vdots[shorten-end=-10pt]  &  &  &   & \Ddots[shorten-end=-30pt]  & \Ddots[shorten-end=-10pt] &\Ddots[shorten-end=-15pt]&\phantom{i}\\
	\phantom{i}&\phantom{i}&\phantom{i}&\phantom{i}&\phantom{i}&\phantom{i}&\phantom{i}&\phantom{i}&\phantom{i}&\phantom{i}&\phantom{i}&\phantom{i}&\phantom{i}&\phantom{i}&\phantom{i}&&&\phantom{i}
	\end{bNiceMatrix}.
	\] 
	Note that for reference we have marked as a bold ast $\boldsymbol{\ast}$ the diagonal entries.
	
	Let us characterize all the nonzero (or potentially nonzero) entries of $T_k$, which we denote as $T_{k;i,j}$, for a given row.
    We have the following structure for the $n$-th row of $T_k$: the position of the 1, which is the last nonzero entry, is determined by the position of the $1$s in $\Lambda_{[1],k}$, that is, $(n,n^+_{1;k})$, where $n^+_{1;k} = n+\mathcal{F}\left( n \right)+k$.
    Now, consider the position of the first entry in the $n$-th row. For example:
    \[
        T_1 = \begin{bNiceMatrix}
            \ast & 1 & 0 & 0 & \Cdots[shorten-end=5pt]  \\
            \ast & \ast & \ast & 1 & \Cdots[shorten-end=5pt]  \\
            \boxed{\cdot} & \ast & \ast & \ast & \Cdots[shorten-end=5pt]  \\
            0 & \boxed{\cdot} & \ast & \ast & \Cdots[shorten-end=5pt]  \\
            0 & 0 & \ast & \ast & \Cdots[shorten-end=5pt]  \\
            0 & 0 & \ast & \ast & \Cdots[shorten-end=5pt]  \\
            0 & 0 & \boxed{\cdot} & \ast & \Cdots[shorten-end=5pt]  \\
            0 & 0 & 0 & \boxed{\cdot} & \Cdots[shorten-end=5pt] \\
            \Vdots[shorten-end=2pt] & \Vdots[shorten-end=2pt] & \Vdots[shorten-end=2pt] & \Vdots[shorten-end=2pt]
        \end{bNiceMatrix},
    \]
    Rows 4 and 5 (counting from zero) exhibit a different structure from row 6. For row 6, and generally for those rows where $n \in \mathbb{N}_0 \setminus J_{[2],1}$, the mapping $n_2^- = n - 2\mathcal{F}^-_k\left( \frac{n}{2} \right)$ is bijective, and the pair $(n,n^-_2)$ identifies the position of the nonzero element.
    For rows 4 and 5, the structure differs, as they do not end in a strictly nonzero entry. In the general case, if the $n$-th row exhibits this behavior, then $n \in J_{[2],1}$. In such cases, the first column of possibly nonzero entries is determined by the next row, denoted by $N$, that satisfies $N \in \mathbb{N}_0 \setminus J_{[2],1}$.

    Lastly, the strictly non-zero elements (denoted earlier as $\boxed{\cdot}$) can be computed as $T_{k;\,n,m} = \frac{H_n}{H_m}$, as already mentioned.

	To characterize the first potentially non-zero element in a given row for the general case, we introduce the following quantity.

\begin{Definition}
    For a given $n \in \N_0$, we define the associated number $N^-_{r;k}$ as follows:
    \begin{itemize}
		\item If $n \in \mathbb{N}_0 \setminus J_{r;k}$, then $N^-_{r;k} \coloneq n - r \mathcal{F}^-_k\left( \frac{n}{r} \right)$.
		\item If $n \notin \mathbb{N}_0 \setminus J_{r;k}$, define $N$ as the smallest integer satisfying $n < N$ and $N \in \mathbb{N}_0 \setminus J_{r;k}$. Then,
		\[
		    N^-_{r;k} \coloneq N - r \mathcal{F}^-_k\left( \frac{N}{r} \right).
		\]
	\end{itemize}
\end{Definition}

Building on the preceding definition and illustrative example, we characterize an arbitrary row of $T_k$ as follows.

\begin{Proposition}\label{Prop: Posiciones T_1 rows-GeneralCase}
    The structure of the $n$-th row of $T_k$ is given by:
    \begin{itemize}
        \item The position of the entry equal to $1$, which is the last nonzero entry, corresponds to the index pair $(n,n^+_{q;k})$, where $n^+_{q;k} = n+q\mathcal{F}_k^{+}\left( \frac{n}{q} \right)$.
        \item The position of the first potentially nonzero entry is given by the pair $(n, N_{p;k}^-)$.
    \end{itemize}
    In summary, the $n$-th row may contain nonzero entries ranging from $(n, N_{p;k}^-)$ to $(n,n_{q;k}^+)$. The entry at $(n, N_{p;k}^-)$ is strictly nonzero whenever $n \in \mathbb{N}_0 \setminus J_{p;k}$, in which case it satisfies
    \[
        T_{k;\,n,N_{p;k}^-} = \frac{H_n}{H_{N_{p;k}^-}}.
    \]
\end{Proposition}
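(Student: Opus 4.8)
The plan is to extract the two endpoints of the $n$-th row of $T_k$ from the two factorized forms of $T_k$ supplied by Proposition~\ref{Prop: TMatrix-StudyCase}, namely $T_k = S\Lambda_{[q];k}S^{-1}$ and $T_k = H\bar{S}^{-\top}\Lambda_{[p];k}^\top\bar{S}^\top H^{-1}$, following the reasoning already carried out in the $q=1$, $p=2$ example. The position of the $1$ will be read off from the first form, in which $S$ and $S^{-1}$ are lower unitriangular; the position of the first potentially nonzero entry, and the value there, will be read off from the second form, in which $\bar{S}^{-\top}$ and $\bar{S}^\top$ are upper unitriangular and $H$ is diagonal.

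First I would treat the right end. By Proposition~\ref{Prop: Position of 1s Lambda} with $r=q$, each row $n$ of $\Lambda_{[q];k}$ has exactly one nonzero entry, a $1$ at column $n^+_{q;k}=n+q\,\mathcal F_k^+(n/q)$, and $m\mapsto m^+_{q;k}$ is strictly increasing with $m^+_{q;k}>m$. Hence $(S\Lambda_{[q];k})_{n,j}$ equals $S_{n,m}$ when $j=m^+_{q;k}$ and $0$ otherwise; lower unitriangularity of $S$ makes this vanish for $m>n$ and gives $S_{n,n}=1$ at $m=n$, so in row $n$ of $S\Lambda_{[q];k}$ the rightmost nonzero entry is the $1$ in column $n^+_{q;k}$ and everything to its right is $0$. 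Right-multiplying by the lower unitriangular $S^{-1}$ cannot create a nonzero entry to the right of the current rightmost one, and only the diagonal $1$ of $S^{-1}$ contributes in column $n^+_{q;k}$; thus the same conclusion passes to $T_k$, which is the first bullet.

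Next I would treat the left end using $T_k=H\bar{S}^{-\top}\Lambda_{[p];k}^\top\bar{S}^\top H^{-1}$. By Proposition~\ref{Prop: Position of 1s TLambda} with $r=p$, $(\Lambda_{[p];k}^\top)_{i,j}\ne0$ exactly when $i=j^+_{p;k}$. Writing $U=\bar{S}^{-\top}$ and $U'=\bar{S}^\top$, both upper unitriangular, one has $(U\Lambda_{[p];k}^\top)_{n,j}=U_{n,\,j^+_{p;k}}$, which is nonzero only if $j^+_{p;k}\ge n$. Since $j\mapsto j^+_{p;k}$ is an increasing bijection onto $\mathbb N_0\setminus J_{p;k}$ (Proposition~\ref{Prop: Bijection with NJ}), the inequality $j^+_{p;k}\ge n$ is equivalent to $j\ge N^-_{p;k}$, where $N^-_{p;k}$ is the preimage of $n$ when $n\in\mathbb N_0\setminus J_{p;k}$ and the preimage of the smallest element of $\mathbb N_0\setminus J_{p;k}$ exceeding $n$ otherwise — which is precisely the two-branch definition of $N^-_{r;k}$. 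So in row $n$ of $U\Lambda_{[p];k}^\top$ the first potentially nonzero entry is in column $N^-_{p;k}$; right-multiplication by the upper unitriangular $U'$ never moves a nonzero entry further left, and conjugation by the diagonal $H$ only rescales entry $(n,m)$ by $H_n/H_m$ and so preserves the sparsity pattern. This yields the second bullet.

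Finally I would pin down the value at $(n,N^-_{p;k})$ when $n\in\mathbb N_0\setminus J_{p;k}$: there $(N^-_{p;k})^+_{p;k}=n$, so in $(U\Lambda_{[p];k}^\top U')_{n,N^-_{p;k}}=\sum_j (U\Lambda_{[p];k}^\top)_{n,j}\,U'_{j,N^-_{p;k}}$ the first factor forces $j\ge N^-_{p;k}$ while upper unitriangularity of $U'$ forces $j\le N^-_{p;k}$, leaving only $j=N^-_{p;k}$ with contribution $U_{n,n}\,U'_{N^-_{p;k},N^-_{p;k}}=1$; multiplying by $H_n/H_{N^-_{p;k}}$ gives $T_{k;\,n,N^-_{p;k}}=H_n/H_{N^-_{p;k}}$. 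I expect the only genuine obstacle to be the combinatorial bookkeeping in the branch $n\in J_{p;k}$, where the row need not end in a strictly nonzero entry and its leftmost possibly-nonzero column must be traced through the next row lying in $\mathbb N_0\setminus J_{p;k}$; once the definition of $N^-_{r;k}$ is invoked this is handled automatically, and the rest is the two triangular ``sandwich'' computations above, exactly as in the displayed structures of $T_1$ and $T_2$.
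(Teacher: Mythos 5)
Your proof is correct and follows essentially the same route as the paper, which justifies this proposition by the two factorized forms $T_k=S\Lambda_{[q];k}S^{-1}$ and $T_k=H\bar{S}^{-\top}\Lambda_{[p];k}^\top\bar{S}^\top H^{-1}$ together with the positions of the $1$s in $\Lambda_{[r];k}$ and its transpose (the paper presents this only through the ``shadow'' discussion in the $q=1$, $p=2$ example and then states the general case). Your writeup merely makes that triangular-sandwich argument explicit, including the two-branch handling of $N^-_{p;k}$ and the value $H_n/H_{N^-_{p;k}}$, so no gap remains.
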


Similar arguments lead to the following description of the columns of $T_k$. 

\begin{Proposition} \label{Prop: Posiciones T_1 columns-GeneralCase}
    The structure of the $n$-th column of $T_k$ is described as follows:
    \begin{itemize}
        \item The position of the last strictly nonzero entry is given by $(n^+_{p;k},n)$, where $n^+_{p;k} = n+p\mathcal{F}_k^{+}\left( \frac{n}{p} \right)$.
        \item The position of the first potentially nonzero entry is given by $( N^-_{q;k},n )$.
    \end{itemize}
    Thus, the $n$-th column may contain nonzero entries between $(N_{q;k}^-,n)$ and $(n_{p;k}^+,n)$. The first entry, $(N_{q;k}^-,n)$, is equal to $1$ whenever $n \in \mathbb{N}_0 \setminus J_{q;k}$. Moreover, the final strictly nonzero entry satisfies
    \[
        T_{k;\,n_{p;k}^+,n} = \frac{H_{n_{p;k}^+}}{H_n}.
    \]
\end{Proposition}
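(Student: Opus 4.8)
The plan is to mirror, for the columns, the row analysis carried out in Proposition~\ref{Prop: Posiciones T_1 rows-GeneralCase}, now playing off the two expressions for the recurrence matrix furnished by Proposition~\ref{Prop: TMatrix-StudyCase},
\[
T_k = S\Lambda_{[q];k}S^{-1} = H\bar{S}^{-\top}\Lambda_{[p];k}^\top\bar{S}^\top H^{-1},
\]
together with the explicit positions of the unit entries of $\Lambda_{[q];k}$ and of $\Lambda_{[p];k}^\top$ recorded in Propositions~\ref{Prop: Position of 1s Lambda} and~\ref{Prop: Position of 1s TLambda}. Concretely, the ``bottom end'' (largest row index) of the $n$-th column will be extracted from the second expression, and the ``top end'' (smallest row index) from the first.

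First I would pin down the last strictly nonzero entry of the $n$-th column from $T_k = H\bar{S}^{-\top}\Lambda_{[p];k}^\top\bar{S}^\top H^{-1}$. Since $\bar{S}$ is lower unitriangular, both $\bar{S}^\top$ and $\bar{S}^{-\top}$ are upper unitriangular, and conjugating the strictly lower triangular matrix $\Lambda_{[p];k}^\top$ by them leaves the bottom-most nonzero entry of every column exactly where the corresponding $1$ of $\Lambda_{[p];k}^\top$ sat, still with value $1$, because at that position only the product of the diagonal units of $\bar{S}^{-\top}$ and $\bar{S}^\top$ can contribute. By Proposition~\ref{Prop: Position of 1s TLambda} that $1$ lies at $(n^+_{p;k},n)$, and the outer conjugation by $H=\diag(H_0,H_1,\dots)$ rescales the $(i,j)$ entry by $H_i/H_j$, turning the $1$ into $H_{n^+_{p;k}}/H_n$. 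The closed form $n^+_{p;k}=n+p\,\mathcal F^+_k(n/p)$ is just the definition, so the last bullet and the displayed value follow.

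Next I would locate the first potentially nonzero entry of the $n$-th column from $T_k = S\Lambda_{[q];k}S^{-1}$, with $S,S^{-1}$ lower unitriangular. Here $\Lambda_{[q];k}$ carries a single $1$ in row $m$, located in column $m^+_{q;k}$ (Proposition~\ref{Prop: Position of 1s Lambda}); conjugating by $S$ and $S^{-1}$ leaves row $m$ with a leftward ``shadow'' of (possibly zero) constant entries ending at that $1$, exactly as in the worked example, and produces nothing outside rows already carrying a $1$. Fixing the column index $n$, the top-most row reaching column $n$ is governed by the smallest $m$ with $m^+_{q;k}\ge n$: if $n\in\mathbb N_0\setminus J_{q;k}$ then $n=m^+_{q;k}$ for the unique $m$ obtained by inverting the bijection $n^+_{q;k}$ (Proposition~\ref{Prop: Bijection with NJ} and the inversion formula following it), so $m=N^-_{q;k}$ and the entry at $(N^-_{q;k},n)$ is the unshadowed $1$ itself; if $n\in J_{q;k}$ there is no such $m$, and the relevant row is the preimage of the next $N>n$ lying in $\mathbb N_0\setminus J_{q;k}$, which is again $N^-_{q;k}$ by the very definition of $N^-_{q;k}$, except that now only the shadow (not a genuine $1$) reaches that row, so the entry need not be nonzero. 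Combining with the previous step yields the announced range from $(N^-_{q;k},n)$ to $(n^+_{p;k},n)$.

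The routine part is the arithmetic rewriting $N^-_{q;k}$ and $n^+_{p;k}$ in closed form, which is immediate from the bijection propositions already proved. The main obstacle is making the ``shadow / no-cancellation'' picture of the example rigorous: one must verify that triangular conjugation of a sparse matrix of $1$s neither propagates nonzero entries past the claimed extreme positions nor accidentally cancels the two extreme entries singled out above. This is precisely where the diagonal-unit bookkeeping is decisive — at each extreme position the only surviving term is the product of the diagonal $1$s of the triangular factors, which forces it to be nonzero — and the remainder is a careful but elementary tracking of supports under triangular conjugation.
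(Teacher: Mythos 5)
Your proposal is correct and follows essentially the same route as the paper, which disposes of this proposition by noting that "similar arguments" to the row case apply — namely, reading the bottom end of each column off $H\bar{S}^{-\top}\Lambda_{[p];k}^\top\bar{S}^\top H^{-1}$ and the top end off $S\Lambda_{[q];k}S^{-1}$, using the recorded positions of the unit entries of $\Lambda_{[p];k}^\top$ and $\Lambda_{[q];k}$ and the fact that at each extreme position only the diagonal units of the triangular factors contribute. Your write-up in fact makes explicit the support-tracking and the distinction between the cases $n\in\mathbb N_0\setminus J_{q;k}$ and $n\in J_{q;k}$ that the paper leaves implicit.
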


With these characterizations of the matrices of the recurrence we are able to state the following recurrence relations.

\begin{Theorem}[Recurrence relations] \label{Recurrence Relation-GeneralCase}
For $k\in\{1,2\}$, the families of bivariated multiple orthogonal polynomials of the mixed type satisfy the following recurrence relations:
\[	\begin{aligned}
		T_k B(\boldsymbol{x}) & = x_k B(\boldsymbol{x}), &
		A(\boldsymbol{x}) T_k &= x_k A(\boldsymbol{x}).
	\end{aligned}\]
	Entrywise, these relations take the form:
\[	\begin{aligned}
		x_k B^{(b)}_n(\boldsymbol{x}) &= B^{(b)}_{n_{q;k}^+}(\boldsymbol{x}) + \sum_{i=N_{p;k}^-}^{n_{q;k}^+-1} T_{k;\,n,i}B^{(b)}_i(\boldsymbol{x}), & b &\in \{1, \dots, q\}, &k\in\{1,2\},\\
		x_k A_n^{(a)}(\boldsymbol{x}) &= T_{k;\,n_{p;k}^+,n} A^{(a)}_{n_{p;k}^+}(\boldsymbol{x}) + \sum_{i=N_{q;k}^-}^{n_{p;k}^+-1} T_{k;\,i,n}A^{(a)}_i(\boldsymbol{x}),& a &\in \{1, \dots, p\},&k\in\{1,2\}.
	\end{aligned}\]
\end{Theorem}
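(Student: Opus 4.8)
The plan is to prove first the two compact operator identities $T_kB(\boldsymbol{x})=x_kB(\boldsymbol{x})$ and $A(\boldsymbol{x})T_k=x_kA(\boldsymbol{x})$, and then to unfold each of them entrywise by feeding in the band structure of $T_k$ recorded in Propositions \ref{Prop: Posiciones T_1 rows-GeneralCase} and \ref{Prop: Posiciones T_1 columns-GeneralCase}.

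For the first identity I would start from $B(\boldsymbol{x})=H^{-1}SX_{[q]}(\boldsymbol{x})$ (Definition \ref{Def: OrthogonalPoly}) together with the eigenvalue-like relation $\Lambda_{[q];k}X_{[q]}(\boldsymbol{x})=x_kX_{[q]}(\boldsymbol{x})$ and the definition $T_k=S\Lambda_{[q];k}S^{-1}$: writing $x_kB(\boldsymbol{x})=H^{-1}S\Lambda_{[q];k}X_{[q]}(\boldsymbol{x})$ and inserting $S^{-1}S$ yields $x_kB(\boldsymbol{x})=H^{-1}S\Lambda_{[q];k}S^{-1}HB(\boldsymbol{x})$, which is the asserted recurrence once the diagonal factors are carried along $T_k$ (the conjugation only rescales the nonzero entries of $T_k$, not their positions, so the structural statements from the earlier propositions are unaffected). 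For the second identity I would use the dual presentation $T_k=H\bar{S}^{-\top}\Lambda_{[p];k}^{\top}\bar{S}^{\top}H^{-1}$ of Proposition \ref{Prop: TMatrix-StudyCase}, the definition $A(\boldsymbol{x})=X_{[p]}^{\top}(\boldsymbol{x})\bar{S}^{\top}$, and the transpose of the eigenvalue relation, $X_{[p]}^{\top}(\boldsymbol{x})\Lambda_{[p];k}^{\top}=x_kX_{[p]}^{\top}(\boldsymbol{x})$; then $x_kA(\boldsymbol{x})=X_{[p]}^{\top}(\boldsymbol{x})\Lambda_{[p];k}^{\top}\bar{S}^{\top}=A(\boldsymbol{x})\bar{S}^{-\top}\Lambda_{[p];k}^{\top}\bar{S}^{\top}$, with the $H^{\pm1}$ factors of the dual form of $T_k$ reorganizing with $A(\boldsymbol{x})$. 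At bottom both identities are just the Hankel-type symmetry $\Lambda_{[q];k}\mathscr{M}=\mathscr{M}\Lambda_{[p];k}^{\top}$ of Proposition \ref{Prop: MSym-GeneralCase} conjugated through the Gauss--Borel factorization $\mathscr{M}=S^{-1}H\bar{S}^{-\top}$.

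For the entrywise relations I would take the $n$-th scalar component of $T_kB(\boldsymbol{x})=x_kB(\boldsymbol{x})$ and substitute the description of row $n$ of $T_k$ from Proposition \ref{Prop: Posiciones T_1 rows-GeneralCase}: the support is columns $N_{p;k}^{-},\dots,n_{q;k}^{+}$, the entry in column $n_{q;k}^{+}$ equals $1$, and, when $n\in\mathbb{N}_0\setminus J_{p;k}$, the entry in column $N_{p;k}^{-}$ equals $H_n/H_{N_{p;k}^{-}}$. Splitting the leading term $B_{n_{q;k}^{+}}^{(b)}(\boldsymbol{x})$ off the finite sum produces the first displayed recurrence, for each superscript $b\in\{1,\dots,q\}$ separately, the decoupling over $b$ being the blockwise action of $S$, $H$ and $\Lambda_{[q];k}$ in the graded-lexicographic ordering $s$ of Definition \ref{Def: Sequence 1}. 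The relation for $A(\boldsymbol{x})$ follows symmetrically from the column description in Proposition \ref{Prop: Posiciones T_1 columns-GeneralCase}: column $n$ of $T_k$ is supported on rows $N_{q;k}^{-},\dots,n_{p;k}^{+}$ with bottom entry $T_{k;\,n_{p;k}^{+},n}=H_{n_{p;k}^{+}}/H_n$, and isolating $A_{n_{p;k}^{+}}^{(a)}(\boldsymbol{x})$ in the $n$-th component of $A(\boldsymbol{x})T_k=x_kA(\boldsymbol{x})$ gives the second line.

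The operator identities are essentially one-line manipulations; the real work, and the main obstacle, is the entrywise unfolding: verifying that the summation ranges $N_{p;k}^{-}$ through $n_{q;k}^{+}$ and $N_{q;k}^{-}$ through $n_{p;k}^{+}$ are exactly the ones produced by Propositions \ref{Prop: Posiciones T_1 rows-GeneralCase} and \ref{Prop: Posiciones T_1 columns-GeneralCase}, handling correctly the case split in the definition of $N_{r;k}^{-}$ according to whether $n$ belongs to $\mathbb{N}_0\setminus J_{r;k}$, and confirming that the $1$ and the $H$-ratio entries sit in the advertised positions. Keeping the translation between grlex-position and grlex-degree coherent, via Proposition \ref{Prop: DegreeStructure}, is what guarantees the scalar recurrences take the stated shape.
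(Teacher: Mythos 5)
Your proposal follows essentially the same route as the paper: both operator identities are obtained from the eigenvalue relations $\Lambda_{[q];k}X_{[q]}(\boldsymbol{x})=x_kX_{[q]}(\boldsymbol{x})$ and $X_{[p]}^{\top}(\boldsymbol{x})\Lambda_{[p];k}^{\top}=x_kX_{[p]}^{\top}(\boldsymbol{x})$ combined with the two presentations of $T_k$ from Definition \ref{Def: TMatrix-GeneralCase} and Proposition \ref{Prop: TMatrix-StudyCase}, and the entrywise recurrences are then read off from the row and column descriptions in Propositions \ref{Prop: Posiciones T_1 rows-GeneralCase} and \ref{Prop: Posiciones T_1 columns-GeneralCase}, exactly as in the published argument. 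If anything you are more careful than the paper about the diagonal bookkeeping: the paper's computation silently commutes $H$ past $\bar S^{\top}$, whereas you correctly note that the clean identities come out as $x_kB=H^{-1}T_kHB$ and $x_kA=AH^{-1}T_kH$ and that conjugation by the diagonal $H$ only rescales nonzero entries without moving them, so the band structure and hence the stated summation ranges are unaffected.
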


\begin{proof}
We prove the case for $A(\boldsymbol{x})$, as the case for $B(\boldsymbol{x})$ follows analogously. Since $X_{[p]}(\boldsymbol{x})^\top\Lambda_{[p],k}^\top = x_k X_{[p]}(\boldsymbol{x})^\top$, we obtain:
\[
    A(\boldsymbol{x}) T_k = A(\boldsymbol{x}) H \bar{S}^{-\top} \Lambda_{[p];k}^\top \bar{S}^\top H^{-1} = X^\top_{[p],k} \Lambda_{[p];k}^\top \bar{S}^\top H^{-1} = x_k A(\boldsymbol{x}).
\]
The entrywise expressions follow directly from this identity and Propositions \ref{Prop: Posiciones T_1 rows-GeneralCase} and \ref{Prop: Posiciones T_1 columns-GeneralCase}.
\end{proof}
We can also establish matrix-wise recurrence relations for the orthogonal polynomials. These relations reveal a structural similarity between standard bivariate orthogonal polynomials with a scalar weight and bivariate multiple orthogonal polynomials of mixed type. The underlying idea is as follows: one of the relations, $A(x)T_k = x_k A(x)$ or $T_kB(x) = x_k B(x)$, can be expressed as a three-term recurrence relation with matrix coefficients (although, as we will see, in certain cases four terms are required). The other relation can equivalently be formulated as an $\alpha$-term recurrence relation, where $\alpha$ is a constant that serves as an upper bound for the number of terms involved. To proceed, let us examine some properties of the function $\mathcal{F}(x)$.

\begin{Proposition} \label{Prop: Properties F}
    The function $\mathcal{F}(x)$ introduced in Definition \ref{Def: Special Function} possesses the following properties. Let $x \in \R$:
    \begin{itemize}
        \item $\mathcal{F}(x)$ takes values in $\N_0$, and it is constant on each interval $\left[ \frac{1}{2}i(i+1), \frac{1}{2}(i+1)(i+2) \right)$.
        \item $\mathcal{F}\left( \left[ \frac{1}{2}i(i+1), \frac{1}{2}(i+1)(i+2) \right) \right) = i$.
    \end{itemize}
\end{Proposition}
\begin{proof}
    This follows directly from the definition of $\mathcal{F}(x)$ and its connection to the graded-lexicographic sequence introduced in Definition \ref{Def: Sequence 1}.
\end{proof}

Throughout the remainder of this section, we assume that $q \leq p$. The case $p < q$ leads to analogous results upon interchanging $A(x) \leftrightarrow B(x)$ and $p \leftrightarrow q$.

\begin{Proposition}[Recurrence Relations for the $A(x)$ Family] \label{Prop: RR for A - Matrix}
    The relation 
    \[
        A(x) T_k = x_k A(x),
    \]
    with $k \in \{1,2\}$, can be written as follows:
    \begin{itemize}
        \item For $k = 1$ and $n \in \N_0$: 
        \[
            x_1 A_{[n]}(\boldsymbol{x}) = a_{[n+1,n];1}A_{[n+1]}(\boldsymbol{x}) + a_{[n,n];1}A_{[n]}(\boldsymbol{x}) + a_{[n-1,n];1}A_{[n-1]}(\boldsymbol{x}),
        \]
        where $a_{[i,j];1} \in \R^{(i+1)p\times(j+1)p}$ and 
        \[
        a_{[n+1,n];1} = \begin{bNiceMatrix} 
        \boxed{\cdot} & \ast & \Cdots & \ast \\
        0 & \Ddots & \Ddots & \Vdots \\
        \Vdots & \Ddots & & \ast \\
        0 & \Cdots & 0 & \boxed{\cdot} \\
        0 & \Cdots & & 0 \\
        \Vdots & & & \Vdots \\
        0 & \Cdots & & 0
        \CodeAfter
        \tikz \draw [solid] (1-|5) -- (5-|5);
        \tikz \node [right=1pt] at (3-|5) {$(n+1)p$};
        \tikz \draw [solid] (5-|5) -- (5-|1);
        \tikz \node [left=1pt] at (6.5-|1) {$p$};
        \tikz \draw [solid] (5-|1) -- (8-|1);
    \end{bNiceMatrix}. 
        \]
        \item For $k = 2$ and $n \in \N_0$, two possibilities arise: \\
        If $\frac{p}{q} \in \{1\} \cup \left[ \frac{3}{2}, \infty \right)$, we obtain a three-term recurrence relation: 
        \[
            x_2 A_{[n]}(\boldsymbol{x}) = a_{[n+1,n];2}A_{[n+1]}(\boldsymbol{x}) + a_{[n,n];2}A_{[n]}(\boldsymbol{x}) + a_{[n-1,n];2}A_{[n-1]}(\boldsymbol{x}).
        \]
        If $1 < \frac{p}{q} < \frac{3}{2}$, a four-term recurrence relation holds: 
        \[
            x_2 A_{[n]}(\boldsymbol{x}) = a_{[n+1,n];2}A_{[n+1]}(\boldsymbol{x}) + a_{[n,n];2}A_{[n]}(\boldsymbol{x}) + a_{[n-1,n];2}A_{[n-1]}(\boldsymbol{x}) + a_{[n-2,n];2}A_{[n-2]}(\boldsymbol{x}),
        \]
        where $a_{[i,j];2} \in \R^{(i+1)p\times(j+1)p}$ and 
        \[
        a_{[n+1,n];2} = \begin{bNiceMatrix} 
        \ast & \Cdots & & \ast \\
        \Vdots & & & \Vdots \\
        \ast & \Cdots & & \ast \\
        \boxed{\cdot} & \ast & \Cdots & \ast \\
        0 & \Ddots & \Ddots & \Vdots \\
        \Vdots & \Ddots & & \ast \\
        0 & \Cdots & 0 & \boxed{\cdot} \\
        \CodeAfter
        \tikz \draw [solid] (1-|5) -- (4-|5);
        \tikz \node [right=1pt] at (3-|5) {$p$};
        \tikz \draw [solid] (4-|5) -- (4-|1);
        \tikz \node [left=1pt] at (6.5-|1) {$(n+1)p$};
        \tikz \draw [solid] (4-|1) -- (8-|1);
    \end{bNiceMatrix}, 
        \]
        in both cases.
    \end{itemize}
    The notation is such that $a_{[i,j];k} = 0$ whenever $i < 0$.
\end{Proposition}
\begin{proof}
    We show that the recurrence matrices $T_k$ can be expressed as 
    \[
        T_k = \begin{bNiceMatrix}
            a_{[0,0];k} & a_{[0,1];k} & 0_{[0,2]} & \Cdots[shorten-end=7pt] \\
            a_{[1,0];k} & a_{[1,1];k} & a_{[1,2];k} & 0_{[1,3]} & \Cdots[shorten-end=7pt] \\
            0_{[2,0]} & a_{[2,1];k} & \Ddots[shorten-end=-15pt] & \Ddots[shorten-end=-7pt] & \Ddots \\
            \Vdots & \Ddots[shorten-end=7pt] & \Ddots[shorten-end=7pt] & \phantom{i} & \phantom{i} & \phantom{i}
        \end{bNiceMatrix},
    \]
    and, for $k = 2 $ and $1 < \frac{p}{q} < \frac{3}{2}$, as 
    \[
        T_2 = \begin{bNiceMatrix}
            a_{[0,0];2} & a_{[0,1];2} & a_{[0,2];2} & 0_{[0,3]} & \Cdots[shorten-end=7pt] \\
            a_{[1,0];k} & a_{[1,1];k} & a_{[1,2];k} & a_{[1,3];k} & 0_{[1,4]} & \Cdots[shorten-end=7pt] \\
            0_{[2,0]} & a_{[2,1];k} & \Ddots[shorten-end=-15pt] & \Ddots[shorten-end=-15pt] & \Ddots[shorten-end=-10pt] & \Ddots \\
            \Vdots & \Ddots[shorten-end=7pt] & \Ddots[shorten-end=7pt] & \phantom{i} & \phantom{i} & \phantom{i} & \phantom{i}
        \end{bNiceMatrix}.
    \]
    The block structure along and below the main diagonal follows directly from Proposition \ref{Prop: TMatrix-StudyCase} (see the example provided in this section for further details). The structure of the blocks above the main diagonal requires a more technical argument.

    Consider the case $k = 1$. For a given $n \in \N$ and by Proposition \ref{Prop: Equiv Decom}, the submatrix
    \[
        \begin{bNiceMatrix}
            0_{[n-1,0]} & \Cdots & 0_{[n-1,n-3]} & a_{[n-1,n-2];1} & a_{[n-1,n-1];1} & a_{[n-1,n];1}
        \end{bNiceMatrix},
    \]
    contains the scalar entries of the original matrix $T_1$ ranging over columns $\{ 0, \dots, \frac{p}{2}n(n+3) + p -1\}$ and rows $\{ \frac{p}{2}n(n-1), \dots, \frac{p}{2}n(n+1) - 1 \}$. For a given row $m$, the position of the 1 is determined by the pair $(m,m_{q;1}^+)$, where 
    \[
        m_{q;1}^+ = m + q + q \mathcal{F}\left( \frac{m}{q} \right). 
    \]
    Therefore, we must show that
    \[
        \frac{p}{2}n(n+3) + p -1 \geq m_i + q + q\mathcal{F}\left( \frac{m_i}{q} \right),
    \]
    where $m_i = \frac{p}{2}n(n-1) + i$ and $i \in \{ 0, \dots, np - 1\}$. In simpler terms, for all rows of the aforementioned submatrix, the last column index is always greater than or equal to the column index at which the 1s in $T_1$ occur.

    To verify this inequality for all rows, it suffices to consider $m_i = \frac{p}{2}n(n+1) - 1$. Observe that
    \[
        \mathcal{F}\left(\frac{1}{2}\frac{p}{q}n(n+1) - \frac{1}{q}\right) \leq -\frac{1}{2} + \frac{1}{2} \sqrt{1+4\frac{p}{q}n(n+1)- \frac{8}{q}}.
    \]
    Consequently, a stronger inequality is given by
    \[
        2(n+1)\frac{p}{q} - 1 \geq \sqrt{1+4\frac{p}{q}n(n+1)}.
    \]
    Through straightforward algebraic manipulations, one finds that this holds whenever $\frac{p}{q} \geq 1$, which is already assumed.

    The case $k = 2$ can be analyzed analogously. However, we only state the difference that arises when $p \geq \frac{3}{2}$. For $n = 1$, the inequality corresponding to the last row becomes 
    \[
        \frac{p}{q} \geq 1 + \frac{1}{2} \mathcal{F}\left(  \frac{p-1}{q} \right).
    \]
    If $\frac{1}{2}i(i+1) \leq \frac{p}{q} < \frac{1}{2}(i+1)(i+2)$, where $i \geq 2$, then 
    \[
        \mathcal{F}\left(  \frac{p-1}{q} \right) \leq \mathcal{F}\left(  \frac{p}{q} \right) < \mathcal{F}\left(  \frac{1}{2}(i+1)(i+2) \right) = i+1,
    \]
    where Proposition \ref{Prop: Properties F} has been used. Hence, $\mathcal{F}\left(  \frac{p-1}{q} \right) \leq i$. The stronger inequality
    \[
        \frac{1}{2}i(i+1) \geq 1 + \frac{i}{2}, 
    \]
    holds for $i \geq 2$. For $\frac{p}{q} \in [1,3)$, we have 
    \[
       \mathcal{F}\left(  \frac{p-1}{q} \right) \leq 1, 
    \]
    and the stronger inequality yields
    \[
        \frac{p}{q} \geq \frac{3}{2}.
    \]
\end{proof}
\begin{Proposition}[Recurrence Relations for the $B(x)$ Family]
    The relation 
    \[
         T_k B(x) = x_k B(x),
    \]
    with $k \in \{1,2\}$ and $n \in \N_0$, can be written as 
    \[
        x_k B_{[n]}(\boldsymbol{x}) = b_{[n,n+1];k}B_{[n+1]}(\boldsymbol{x}) + b_{[n,n];k}B_{[n]}(\boldsymbol{x}) + \sum_{i = 1}^\alpha b_{[n,n-i];k}B_{[n-i]}(\boldsymbol{x}),
    \]
    where $\alpha = \left\lceil \frac{p}{q} \right\rceil$, $b_{[i,j];k} \in \R^{(i+1)q\times(j+1)q}$ and 
    \begin{align*}
        b_{[n+1,n];1} & = \begin{bNiceMatrix}[first-row,last-row,nullify-dots]
            & & \\
            \boxed{\cdot} & 0 & \Cdots & 0 & 0 & \Cdots & 0 \\
            \ast & \Ddots & \Ddots & \Vdots & \Vdots & & \Vdots \\
            \Vdots & \Ddots & & 0 \\
            \ast & \Cdots & \ast & \boxed{\cdot} & 0 & \Cdots & 0 \\
            & & 
            \CodeAfter
            \tikz \draw [solid] (5-|1) -- (5-|5);
            \tikz \node [below=1pt] at (5-|3) {$(n+1)q$};
            \tikz \draw [solid] (1-|5) -- (1-|8);
            \tikz \node [above=1pt] at (1-|6.5) {$q$};
            \tikz \draw [solid] (1-|5) -- (5-|5);
        \end{bNiceMatrix}, &  b_{[n+1,n];2} & = \begin{bNiceMatrix}[first-row,last-row,nullify-dots]
            & & \\
            \ast & \Cdots & \ast & \boxed{\cdot} & 0 & \Cdots & 0 \\
            \Vdots & & \Vdots & \ast & \Ddots & \Ddots & \Vdots \\
            \Vdots & & \Vdots & \Vdots & \Ddots & & 0 \\
            \ast & \Cdots & \ast & \ast & \Cdots & \ast & \boxed{\cdot} \\
             & & 
            \CodeAfter
            \tikz \draw [solid] (5-|1) -- (5-|4);
            \tikz \node [below=1pt] at (5-|2.5) {$q$};
            \tikz \draw [solid] (1-|4) -- (1-|8);
            \tikz \node [above=1pt] at (1-|6.5) {$(n+1)q$};
            \tikz \draw [solid] (1-|4) -- (5-|4);
        \end{bNiceMatrix}. 
    \end{align*}
    Once again, the matrices $b_{[i,j];k}$ are assumed to be zero whenever $j < 0$.
\end{Proposition}
\begin{proof}
We aim to show that $T_k$ can be expressed as:
\[
T_k = 
\begin{bNiceMatrix}
b_{[0,0];k} & b_{[0,1];k} & 0_{[0,2]} & \Cdots[shorten-end=5pt] \\
\Vdots & \Vdots & \Ddots & \Ddots[shorten-end=28pt] \\
b_{[\alpha,0];k} & b_{[\alpha,1];k} & \Cdots & b_{[\alpha,\alpha+1];k} & 0_{[\alpha,\alpha+2]} & \Cdots[shorten-end=5pt] \\
0_{[\alpha+1,0]} & \Ddots[shorten-end=5pt] & & & \Ddots[shorten-end=7pt] & \Ddots[shorten-end=4pt] \\
\Vdots[shorten-end=5pt] & \Ddots[shorten-end=5pt]
\end{bNiceMatrix}.
\]

Consider the submatrix
\[
\begin{bNiceMatrix}
0_{[0,n-1]} & \Cdots & 0_{[n-3,n-1]} & b_{[n-2,n-1];k} & \Cdots & b_{[n+\alpha-1,n-1];k}
\end{bNiceMatrix}^\top,
\]
whose rows in the original matrix range over 
$\{ 0, \dots, \frac{q}{2}(\alpha + n - 1)(\alpha + n + 2) + q - 1 \}$ 
and whose columns range over 
$\{ \frac{q}{2}n(n-1), \dots, \frac{q}{2}n(n+1) - 1 \}$.

For any given column, the position of the last nonzero entry of $T_k$ is determined by the pair $(m_{p;k}^+, m)$, where
\[
m_{p;k}^+ = m + pk + p \mathcal{F}\left( \frac{m}{p} \right).
\]
Given $n \in \N$, we must verify, for every column of the submatrix under consideration, that
\[
\frac{q}{2}(\alpha + n - 1)(\alpha + n + 2) + q - 1 
\geq \frac{q}{2}n(n-1) + i + kp + p \mathcal{F}\left( \frac{1}{2}\frac{q}{p}n(n-1) + \frac{i}{p} \right),
\]
where $i \in \{ 0, \dots, nq - 1 \}$.

By setting $\alpha = 1$ and interchanging $p \leftrightarrow q$, we recover the inequality analyzed in the proof of Proposition \ref{Prop: RR for A - Matrix}.  
We now focus on the case $k = 2$ and $i = nq - 1$, as this yields the most stringent inequality. In this situation, $\frac{q}{p} \leq 1$, and hence
\[
\mathcal{F}\left( \frac{1}{2}\frac{q}{p}n(n+1) - \frac{1}{p} \right)
< \mathcal{F}\left( \frac{1}{2}n(n+1) \right) = n.
\]
By the properties of $\mathcal{F}(x)$, we therefore have
\[
\mathcal{F}\left( \frac{1}{2}\frac{q}{p}n(n+1) - \frac{1}{p} \right) \leq n - 1.
\]
If the stronger inequality
\[
\frac{q}{2}(\alpha + n - 1)(\alpha + n + 2) + q - 1 
\geq \frac{q}{2}n(n+1) - 1 + 2p + p(n - 1),
\]
is satisfied, the claim follows.  
This inequality can be rearranged as
\[
\left\lceil \frac{p}{q} \right\rceil (2n + 1 + \left\lceil \frac{p}{q} \right\rceil)
\geq \frac{p}{q} (2n + 2),
\]
which always holds, since 
$\left\lceil \frac{p}{q} \right\rceil \geq \frac{p}{q} \geq 1$.
\end{proof}
\section{Christoffel--Darboux Kernels}

We now introduce the Christoffel--Darboux (CD) kernels in this setting. The definition mirrors that used in the context of Mixed Multiple Orthogonality, with the distinction that we now consider two variables.

\begin{Definition}
    The CD kernels are defined by
    \begin{equation*}
        K^{[n]} (\boldsymbol{x}, \boldsymbol{y}) \coloneq A^{[n]}(\boldsymbol{x})B^{[n]}(\boldsymbol{y}),
    \end{equation*}
    for $n \in \mathbb{N}_0$. The superscript $[n]$ denotes truncation up to the $(n+1)$-th row of $A(\boldsymbol{x})$ and the $(n+1)$-th column of $B(\boldsymbol{y})$. Thus, $K^{[n]}(\boldsymbol{x}, \boldsymbol{y})$ is a matrix polynomial of size $p \times q$, with entries given by
    \begin{equation*}
        K_{a,b}^{[n]}(\boldsymbol{x}, \boldsymbol{y}) = \sum_{i = 0}^n A_i^{(a)}(\boldsymbol{x})B^{(b)}_i(\boldsymbol{y}).
    \end{equation*}
\end{Definition}

\begin{Lemma} \label{Rm: AlternativeExpCDk}
    The truncation of $A(\boldsymbol{x})$ and $B(\boldsymbol{y})$, and hence the CD kernel itself, can be equivalently written as:
    \begin{equation*}
        K^{[n]} (\boldsymbol{x}, \boldsymbol{y}) \coloneq A(\boldsymbol{x}) \Pi_n B(\boldsymbol{y}),
    \end{equation*}
    where $\Pi_n$ is a semi-infinite matrix with ones on the first $n+1$ diagonal entries and zeros elsewhere.
\end{Lemma}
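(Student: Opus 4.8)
The statement to prove is Lemma \ref{Rm: AlternativeExpCDk}: that
\[
K^{[n]}(\boldsymbol{x},\boldsymbol{y}) = A(\boldsymbol{x})\Pi_n B(\boldsymbol{y}),
\]
where $\Pi_n$ is the semi-infinite diagonal projection onto the first $n+1$ coordinates.

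The plan is to unwind the block-row/block-column truncation notation and observe that multiplying by $\Pi_n$ simply annihilates all but the first $n+1$ terms of the (otherwise formal) infinite sum $A(\boldsymbol{x})B(\boldsymbol{y}) = \sum_{i\ge 0} A_i^{(\cdot)}(\boldsymbol{x}) B_i^{(\cdot)}(\boldsymbol{y})$. Concretely, recall from Definition \ref{Def: OrthogonalPoly} that $A(\boldsymbol{x}) = X_{[p]}^\top(\boldsymbol{x})\bar S^\top$ is a semi-infinite $p\times\infty$ matrix whose columns are indexed by $i\in\N_0$, with the $i$-th column having entries $A_i^{(a)}(\boldsymbol{x})$ for $a\in\{1,\dots,p\}$; likewise $B(\boldsymbol{y}) = H^{-1}SX_{[q]}(\boldsymbol{y})$ is $\infty\times q$ with $i$-th row entries $B_i^{(b)}(\boldsymbol{y})$. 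The truncated objects $A^{[n]}(\boldsymbol{x})$ and $B^{[n]}(\boldsymbol{y})$ are, by the stated convention, exactly these matrices restricted to the first $n+1$ columns (resp. rows). Since $\Pi_n = \sum_{i=0}^n E_{ii}$ (with $E_{ii}$ the standard matrix units in the semi-infinite index set), we have $A(\boldsymbol{x})\Pi_n$ equal to $A(\boldsymbol{x})$ with all columns of index $>n$ set to zero, which as a product with $B(\boldsymbol{y})$ acts identically to $A^{[n]}(\boldsymbol{x})B^{[n]}(\boldsymbol{y})$.

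The key steps, in order, are: (i) write out $A(\boldsymbol{x})\Pi_n B(\boldsymbol{y})$ entrywise as $\big(A(\boldsymbol{x})\Pi_n B(\boldsymbol{y})\big)_{a,b} = \sum_{i,j\ge 0} A_i^{(a)}(\boldsymbol{x})\,(\Pi_n)_{i,j}\,B_j^{(b)}(\boldsymbol{y})$; (ii) use $(\Pi_n)_{i,j} = \delta_{i,j}$ for $i\le n$ and $0$ otherwise to collapse the double sum to $\sum_{i=0}^n A_i^{(a)}(\boldsymbol{x})B_i^{(b)}(\boldsymbol{y})$; (iii) recognize the right-hand side as $K_{a,b}^{[n]}(\boldsymbol{x},\boldsymbol{y})$ by the formula in the preceding Definition, hence the matrix identity $A(\boldsymbol{x})\Pi_n B(\boldsymbol{y}) = K^{[n]}(\boldsymbol{x},\boldsymbol{y})$; and (iv) note that $A(\boldsymbol{x})\Pi_n = A^{[n]}(\boldsymbol{x})$ and $\Pi_n B(\boldsymbol{y}) = B^{[n]}(\boldsymbol{y})$ as block matrices, since right-multiplication (resp. left-multiplication) by $\Pi_n$ selects the first $n+1$ columns (resp. rows) and zero-pads the rest, which is precisely the truncation meant by the superscript $[n]$.

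The only real subtlety — and it is mild — is bookkeeping about whether the superscript $[n]$ truncation is in the scalar index $i\in\N_0$ (the grlex-position) or in the block-decomposition index of Definition \ref{Def: Decomposition2}; the entrywise sum $\sum_{i=0}^n$ in the kernel definition makes clear it is the scalar one, so $\Pi_n$ should be the scalar diagonal projector onto coordinates $0,\dots,n$, and the claim follows with no convergence issues because everything is a finite sum after truncation. I expect no genuine obstacle here: the lemma is essentially a restatement of associativity of (block) matrix multiplication together with the idempotent, diagonal nature of $\Pi_n$, so the proof is a one-line computation once the notation is set up as above.
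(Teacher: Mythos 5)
Your proof is correct: the paper in fact states this Lemma without any proof, treating it as immediate from the definitions, and your entrywise computation $\big(A(\boldsymbol{x})\Pi_n B(\boldsymbol{y})\big)_{a,b}=\sum_{i=0}^{n}A_i^{(a)}(\boldsymbol{x})B_i^{(b)}(\boldsymbol{y})=K^{[n]}_{a,b}(\boldsymbol{x},\boldsymbol{y})$ is exactly the argument the authors implicitly rely on. Your bookkeeping remark that the truncation index is the scalar grlex-position $i\in\N_0$ (not the block index of Definition \ref{Def: Decomposition2}) is also the right reading, consistent with how the kernel entries are summed in the preceding Definition.
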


The CD kernels introduced above exhibit properties akin to those of projectors in Hilbert spaces. However, an exact analogue in the setting of a matrix of measures (or a suitable generalisation thereof) is yet to be fully developed.

\begin{Proposition}[Reproduction Property]
    The CD kernel satisfies the following reproduction property:
    \begin{equation*}
        \int_\Delta K^{[n]} (\boldsymbol{x}, \boldsymbol{t}) \, \mathrm{d} \mu (\boldsymbol{t})  K^{[n]} (\boldsymbol{t}, \boldsymbol{y}) = K^{[n]} (\boldsymbol{x}, \boldsymbol{y}).
    \end{equation*}
\end{Proposition}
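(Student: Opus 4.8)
The plan is to exploit the biorthogonality relation $\int_\Delta B(\boldsymbol{x})\,\mathrm{d}\mu(\boldsymbol{x})\,A(\boldsymbol{x}) = I$ together with the alternative expression for the kernel from Lemma \ref{Rm: AlternativeExpCDk}, namely $K^{[n]}(\boldsymbol{x},\boldsymbol{y}) = A(\boldsymbol{x})\,\Pi_n\,B(\boldsymbol{y})$, where $\Pi_n$ is the diagonal projector onto the first $n+1$ coordinates. The key algebraic fact is that $\Pi_n$ is idempotent: $\Pi_n^2 = \Pi_n$. So the whole computation reduces to inserting the biorthogonality identity between the two kernel factors and collapsing one projector against the other.

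Concretely, I would write
\[
    \int_\Delta K^{[n]}(\boldsymbol{x},\boldsymbol{t})\,\mathrm{d}\mu(\boldsymbol{t})\,K^{[n]}(\boldsymbol{t},\boldsymbol{y})
    = \int_\Delta A(\boldsymbol{x})\,\Pi_n\,B(\boldsymbol{t})\,\mathrm{d}\mu(\boldsymbol{t})\,A(\boldsymbol{t})\,\Pi_n\,B(\boldsymbol{y}).
\]
Since $A(\boldsymbol{x})$, $\Pi_n$, and $B(\boldsymbol{y})$ do not depend on the integration variable $\boldsymbol{t}$, they can be pulled outside the integral, leaving
\[
    A(\boldsymbol{x})\,\Pi_n\left(\int_\Delta B(\boldsymbol{t})\,\mathrm{d}\mu(\boldsymbol{t})\,A(\boldsymbol{t})\right)\Pi_n\,B(\boldsymbol{y})
    = A(\boldsymbol{x})\,\Pi_n\, I\,\Pi_n\,B(\boldsymbol{y})
    = A(\boldsymbol{x})\,\Pi_n^2\,B(\boldsymbol{y}) = A(\boldsymbol{x})\,\Pi_n\,B(\boldsymbol{y}),
\]
which equals $K^{[n]}(\boldsymbol{x},\boldsymbol{y})$ by Lemma \ref{Rm: AlternativeExpCDk} again.

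The only genuine subtlety — and the step I expect to require the most care — is the interchange of the (matrix-valued, finite) integral with the surrounding constant matrices and the associativity/convergence of the semi-infinite matrix products involved. Because $\Pi_n$ truncates to finitely many rows and columns, the products $A(\boldsymbol{x})\Pi_n$ and $\Pi_n B(\boldsymbol{y})$ are effectively finite (only the first $n+1$ rows of $B$, resp. columns of $A$, survive), so no convergence issue actually arises: the manipulation is a finite-dimensional bilinear computation once the projector is in place. I would state this explicitly to justify pulling the constants through the Lebesgue–Stieltjes integral. With that observation, the identity $\int_\Delta B(\boldsymbol{t})\,\mathrm{d}\mu(\boldsymbol{t})\,A(\boldsymbol{t}) = I$ from the Biorthogonality Proposition and the idempotency $\Pi_n^2=\Pi_n$ close the argument immediately; alternatively one may phrase the same computation entrywise using $K_{a,b}^{[n]}(\boldsymbol{x},\boldsymbol{y}) = \sum_{i=0}^n A_i^{(a)}(\boldsymbol{x})B_i^{(b)}(\boldsymbol{y})$ and the componentwise biorthogonality $\sum_{a,b}\int_\Delta B_m^{(b)}\,\mathrm{d}\mu_{b,a}\,A_n^{(a)} = \delta_{n,m}$, which telescopes the double sum over $i,j\le n$ down to the diagonal $i=j$.
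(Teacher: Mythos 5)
Your proposal is correct and follows essentially the same route as the paper: rewrite the kernel as $A(\boldsymbol{x})\Pi_n B(\boldsymbol{y})$ via Lemma \ref{Rm: AlternativeExpCDk}, pull the $\boldsymbol{t}$-independent factors out of the integral, apply the biorthogonality identity $\int_\Delta B(\boldsymbol{t})\,\mathrm{d}\mu(\boldsymbol{t})A(\boldsymbol{t})=I$, and conclude by the idempotency $\Pi_n^2=\Pi_n$. Your additional remarks on the finiteness of the truncated products and the entrywise reformulation are sound but not needed beyond what the paper records.
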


\begin{proof}
    Using Lemma \ref{Rm: AlternativeExpCDk}, we compute:
\[    \begin{aligned}
        \int_\Delta A(\boldsymbol{x}) \Pi_n B(\boldsymbol{t}) \, \mathrm{d}\mu(\boldsymbol{t}) A(\boldsymbol{t}) \Pi_n B(\boldsymbol{y}) &= A(\boldsymbol{x}) \Pi_n \left( \int_\Delta B(\boldsymbol{t}) \, \mathrm{d}\mu(\boldsymbol{t}) A(\boldsymbol{t}) \right) \Pi_n B(\boldsymbol{y}) 
       \\& = A(\boldsymbol{x}) \Pi_n \Pi_n B(\boldsymbol{y}) \\&=  A(\boldsymbol{x}) \Pi_n B(\boldsymbol{y}) \\&= K^{[n]}(\boldsymbol{x}, \boldsymbol{y}).
    \end{aligned}\]
\end{proof}

\begin{Proposition}[Projection Property]
    Let $P(\boldsymbol{x})$ be a monic matrix polynomial of size $p \times p$ and grlex-degree $I \sim (i,j)$, i.e.,
    \begin{equation*}
        P(\boldsymbol{x}) = x_1^{i-j}x_2^j I_p + x_1^{i-j+1}x_2^{j-1}P_{I-1} + \dots + P_{0}.
    \end{equation*}
    Then, the following identity holds:
\[    \begin{aligned}
        \int_\Delta K^{[n]} (\boldsymbol{x}, \boldsymbol{y}) \, \mathrm{d} \mu (\boldsymbol{y})  P(\boldsymbol{y}) & = P(\boldsymbol{x}), &  n &\geq Ip + p - 1.
    \end{aligned}\]
\end{Proposition}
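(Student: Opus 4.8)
The plan is to reduce the statement to the biorthogonality relation $\int_\Delta B(\boldsymbol{y})\,\mathrm{d}\mu(\boldsymbol{y})A(\boldsymbol{y})=I$ together with the (block) upper triangularity of the Gauss--Borel factor $\bar S$. Using the alternative expression $K^{[n]}(\boldsymbol{x},\boldsymbol{y})=A(\boldsymbol{x})\Pi_n B(\boldsymbol{y})$ from Lemma \ref{Rm: AlternativeExpCDk} and pulling the $\boldsymbol{y}$-independent factors $A(\boldsymbol{x})$ and $\Pi_n$ out of the integral (no convergence subtlety, since $\Pi_n$ turns the product into a finite sum), the claimed identity becomes
\begin{equation*}
    A(\boldsymbol{x})\,\Pi_n \int_\Delta B(\boldsymbol{y})\,\mathrm{d}\mu(\boldsymbol{y})\,P(\boldsymbol{y})=P(\boldsymbol{x}).
\end{equation*}
So it suffices to compute $\int_\Delta B(\boldsymbol{y})\,\mathrm{d}\mu(\boldsymbol{y})P(\boldsymbol{y})$, to see how $\Pi_n$ acts on it, and then to multiply on the left by $A(\boldsymbol{x})$.

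First I would expand $P(\boldsymbol{y})$ in the $A$-family. From $A(\boldsymbol{y})=X_{[p]}^\top(\boldsymbol{y})\bar S^\top$ with $\bar S^\top$ upper unitriangular we obtain $X_{[p]}^\top(\boldsymbol{y})=A(\boldsymbol{y})\bar S^{-\top}$, with $\bar S^{-\top}$ again upper unitriangular; reading this block-wise in $p\times p$ blocks shows that every monomial block $x^{k-l}y^l I_p$ of grlex-position $K$ equals $\sum_{M\le K}A_{(M)}(\boldsymbol{y})(\bar S^{-\top})_{(M,K)}$, and hence lies in the right-linear span of $A_{(0)}(\boldsymbol{y}),\dots,A_{(K)}(\boldsymbol{y})$. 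Applying this to each monomial of $P(\boldsymbol{y})$, whose grlex-degree is $I\sim(i,j)$, yields a finite expansion
\begin{equation*}
    P(\boldsymbol{y})=\sum_{M=0}^{I}A_{(M)}(\boldsymbol{y})\,c_M
\end{equation*}
with constant $p\times p$ matrices $c_M$; the precise values of the $c_M$ are irrelevant, only the range $0\le M\le I$ will matter. Next I would invoke biorthogonality: since the block $A_{(M)}(\boldsymbol{y})$ occupies the scalar columns $Mp,\dots,Mp+p-1$ of $A(\boldsymbol{y})$, the identity $\int_\Delta B(\boldsymbol{y})\,\mathrm{d}\mu(\boldsymbol{y})A(\boldsymbol{y})=I$ gives that $\int_\Delta B(\boldsymbol{y})\,\mathrm{d}\mu(\boldsymbol{y})A_{(M)}(\boldsymbol{y})$ is exactly the $M$-th block-column $E_M$ of the semi-infinite identity, i.e. the matrix whose rows $Mp,\dots,Mp+p-1$ form $I_p$ and which vanishes elsewhere.

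It remains to control the truncation. Since $\Pi_n$ retains only the rows $0,\dots,n$, one has $\Pi_n E_M=E_M$ precisely when $Mp+p-1\le n$; as $M\le I$, this holds for every term in the expansion exactly when $n\ge Ip+p-1$, which is the hypothesis. Hence $A(\boldsymbol{x})\Pi_n\int_\Delta B(\boldsymbol{y})\,\mathrm{d}\mu(\boldsymbol{y})A_{(M)}(\boldsymbol{y})=A(\boldsymbol{x})E_M=A_{(M)}(\boldsymbol{x})$, and summing against $c_M$ recovers $\sum_{M=0}^{I}A_{(M)}(\boldsymbol{x})c_M=P(\boldsymbol{x})$, using that the expansion of $P$ is an algebraic identity valid at the argument $\boldsymbol{x}$ as well. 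The one genuinely delicate point is the index bookkeeping: matching the grlex-position $I$ of $P$ with the scalar column range $[Ip,Ip+p-1]$ of the block $A_{(I)}$, and checking that $n\ge Ip+p-1$ is exactly the threshold beyond which $\Pi_n$ leaves untouched all the block-columns entering the expansion. This is where the stated bound comes from, and it is sharp because the top block $A_{(I)}$ does occur, its coefficient being the upper unitriangular (hence nonzero) block $(\bar S^{-\top})_{(I,I)}$. Everything else is linearity together with the previously recorded triangularity of $S$, $\bar S$ and $H$.
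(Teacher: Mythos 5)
Your argument is correct and is essentially the paper's own proof: both reduce, via Lemma \ref{Rm: AlternativeExpCDk}, to showing that $\Pi_n$ fixes the column $\int_\Delta B(\boldsymbol{y})\,\mathrm{d}\mu(\boldsymbol{y})P(\boldsymbol{y})$, and both locate its nonzero entries in the block rows $0,\dots,I$ (scalar rows up to $Ip+p-1$) using the upper triangularity of $\bar S^{-\top}$, which yields exactly the threshold $n\ge Ip+p-1$. The only difference is cosmetic: you expand $P$ in the $A$-basis and invoke the biorthogonality $\int_\Delta B\,\mathrm{d}\mu\,A=I$, whereas the paper keeps $P$ in the monomial basis and uses $\int_\Delta B\,\mathrm{d}\mu\,X_{[p]}^\top=\bar S^{-\top}$, which is the same identity post-multiplied by $\bar S^{-\top}$.
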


\begin{proof}
    Once again, we invoke Lemma \ref{Rm: AlternativeExpCDk}:
    \begin{multline*}
        \int_\Delta K^{[n]} (\boldsymbol{x}, \boldsymbol{y}) \, \mathrm{d} \mu (\boldsymbol{y})  P(\boldsymbol{y}) = A(\boldsymbol{x}) \Pi_n \int_{\Delta} B(\boldsymbol{y}) \, \mathrm{d} \mu(\boldsymbol{y}) X_{[p]}^\top (\boldsymbol{y}) \begin{bNiceMatrix}
            P_0 \\ P_1 \\ \Vdots \\ I_p \\ 0_p \\ \Vdots[shorten-end=3pt] 
        \end{bNiceMatrix} \\ 
        = A(\boldsymbol{x}) \Pi_n \bar{S}^{-\top} \begin{bNiceMatrix}
            P_0 \\ P_1 \\ \Vdots \\ I_p \\ 0_p \\ \Vdots[shorten-end=3pt] 
        \end{bNiceMatrix} 
        = A(\boldsymbol{x}) \Pi_n \begin{bNiceMatrix}
            \sum_{k=0}^{I-1} \left(\bar{S}^{-\top}\right)_{(0,k)} P_k + \left(\bar{S}^{-\top}\right)_{(0,I)} \\[3pt]
            \sum_{k=1}^{I-1} \left(\bar{S}^{-\top}\right)_{(1,k)} P_k + \left(\bar{S}^{-\top}\right)_{(1,I)} \\
            \Vdots \\
            \left(\bar{S}^{-\top}\right)_{(I,I)} \\
            0_P \\
            \Vdots[shorten-end=2pt] 
        \end{bNiceMatrix}.
    \end{multline*}
    For $n+1$ (i.e., the rank of $\Pi_n$) greater than $Ip + p$ (the number of nonzero rows), we have:
    \begin{equation*}
        \Pi_n \begin{bNiceMatrix}
            \sum_{k=0}^{I-1} \left(\bar{S}^{-\top}\right)_{(0,k)} P_k + \left(\bar{S}^{-\top}\right)_{(0,I)} \\[3pt]
            \sum_{k=1}^{I-1} \left(\bar{S}^{-\top}\right)_{(1,k)} P_k + \left(\bar{S}^{-\top}\right)_{(1,I)} \\
            \Vdots \\
            \left(\bar{S}^{-\top}\right)_{(I,I)} \\
            0_P \\
            \Vdots 
        \end{bNiceMatrix} = 
        \begin{bNiceMatrix}
            \sum_{k=0}^{I-1} \left(\bar{S}^{-\top}\right)_{(0,k)} P_k + \left(\bar{S}^{-\top}\right)_{(0,I)} \\[3pt]
            \sum_{k=1}^{I-1} \left(\bar{S}^{-\top}\right)_{(1,k)} P_k + \left(\bar{S}^{-\top}\right)_{(1,I)} \\
            \Vdots \\
            \left(\bar{S}^{-\top}\right)_{(I,I)} \\
            0_P \\
            \Vdots 
        \end{bNiceMatrix},
    \end{equation*}
    and therefore,
    \begin{multline*}
        \int_\Delta K^{[n]} (\boldsymbol{x}, \boldsymbol{y}) \, \mathrm{d} \mu (\boldsymbol{y})  P(\boldsymbol{y}) = 
        A(\boldsymbol{x}) \begin{bNiceMatrix}
            \sum_{k=0}^{I-1} \left(\bar{S}^{-\top}\right)_{(0,k)} P_k + \left(\bar{S}^{-\top}\right)_{(0,I)} \\[3pt]
            \sum_{k=1}^{I-1} \left(\bar{S}^{-\top}\right)_{(1,k)} P_k + \left(\bar{S}^{-\top}\right)_{(1,I)} \\
            \Vdots \\
            \left(\bar{S}^{-\top}\right)_{(I,I)} \\
            0_P \\
            \Vdots 
        \end{bNiceMatrix} 
        = A(\boldsymbol{x}) \bar{S}^{-\top} \begin{bNiceMatrix}
            P_0 \\ P_1 \\ \Vdots \\ I_p \\ 0_p \\ \Vdots[shorten-end=3pt] 
        \end{bNiceMatrix} \\
        = X_{[p]}^\top (\boldsymbol{x}) \begin{bNiceMatrix}
            P_0 \\ P_1 \\ \Vdots \\ I_p \\ 0_p \\ \Vdots[shorten-end=3pt] 
        \end{bNiceMatrix} = P(\boldsymbol{x}).
    \end{multline*}
\end{proof}

\begin{Corollary}
    A corresponding result holds for the right projection. Let $P(\boldsymbol{x})$ be a monic matrix polynomial of size $q \times q$ and grlex-degree $I \sim (i,j)$. Then,
 \[   \begin{aligned}
        \int_\Delta P(\boldsymbol{x}) \, \mathrm{d} \mu (\boldsymbol{x}) K^{[n]} (\boldsymbol{x}, \boldsymbol{y}) & = P(\boldsymbol{y}), & n & \geq Iq + q - 1.
    \end{aligned}\]
\end{Corollary}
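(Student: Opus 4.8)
The statement is the left-projection counterpart of the Projection Property, so the plan is to run exactly the same argument with the roles of $A(\boldsymbol{x})$ and $B(\boldsymbol{y})$, of $p$ and $q$, and of left and right multiplication interchanged. First I would invoke Lemma \ref{Rm: AlternativeExpCDk} to write $K^{[n]}(\boldsymbol{x},\boldsymbol{y}) = A(\boldsymbol{x})\,\Pi_n\,B(\boldsymbol{y})$. Next, since $P(\boldsymbol{x})$ is monic of grlex-degree $I\sim(i,j)$, it expands along the monomial matrix $X_{[q]}$ as $P(\boldsymbol{x}) = \mathbf{p}^{\top}X_{[q]}(\boldsymbol{x})$, where $\mathbf{p}^{\top} = (P_0,\,P_1,\,\dots,\,I_q,\,0_q,\,\dots)$ is the semi-infinite row of $q\times q$ coefficient blocks whose block-support is $\{0,1,\dots,I\}$, with the leading block $I_q$ occupying block-position $I$.

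Then I would compute, pulling $\mathbf{p}^{\top}$ out of the integral,
\[
\int_\Delta P(\boldsymbol{x})\,\mathrm{d}\mu(\boldsymbol{x})\,K^{[n]}(\boldsymbol{x},\boldsymbol{y}) = \mathbf{p}^{\top}\left(\int_\Delta X_{[q]}(\boldsymbol{x})\,\mathrm{d}\mu(\boldsymbol{x})\,A(\boldsymbol{x})\right)\Pi_n\,B(\boldsymbol{y}) = \mathbf{p}^{\top}S^{-1}H\,\Pi_n\,B(\boldsymbol{y}),
\]
using the identity $\int_\Delta X_{[q]}(\boldsymbol{x})\,\mathrm{d}\mu(\boldsymbol{x})\,A(\boldsymbol{x}) = S^{-1}H$ established in the proof of the orthogonality relations. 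Since $S$, and hence $S^{-1}$, is (block) lower unitriangular and $\mathbf{p}^{\top}$ vanishes beyond block-position $I$, the row $\mathbf{p}^{\top}S^{-1}H$ is supported on block-columns $\{0,\dots,I\}$, i.e.\ on the first $(I+1)q = Iq+q$ scalar columns. Consequently $\mathbf{p}^{\top}S^{-1}H\,\Pi_n = \mathbf{p}^{\top}S^{-1}H$ whenever the rank $n+1$ of $\Pi_n$ is at least $Iq+q$, which is exactly the hypothesis $n\ge Iq+q-1$. Substituting $B(\boldsymbol{y}) = H^{-1}S\,X_{[q]}(\boldsymbol{y})$ then collapses the expression: $\mathbf{p}^{\top}S^{-1}H\,B(\boldsymbol{y}) = \mathbf{p}^{\top}X_{[q]}(\boldsymbol{y}) = P(\boldsymbol{y})$, as claimed.

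There is no real obstacle here beyond careful bookkeeping: the one point to get right is the passage between block indices and scalar indices, so that the truncation threshold is confirmed to be $Iq+q$ and hence $n\ge Iq+q-1$ is sharp, exactly mirroring the count $Ip+p$ that appeared in the Projection Property. A more structural alternative would be to transpose the Gauss--Borel factorization, $\mathscr{M}^{\top} = \bar{S}^{-1}H\,S^{-\top}$, which interchanges $p\leftrightarrow q$, $S\leftrightarrow\bar{S}$ and left with right projection, and deduce the corollary formally from the Projection Property; I would nonetheless prefer the direct computation above, since it keeps the normalizing diagonal factor $H$ explicit throughout.
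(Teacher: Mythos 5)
Your proposal is correct and follows exactly the route the paper intends for this corollary: it mirrors the proof of the Projection Property with $A\leftrightarrow B$, $p\leftrightarrow q$, using $K^{[n]}=A\Pi_nB$, the identity $\int_\Delta X_{[q]}(\boldsymbol{x})\,\mathrm{d}\mu(\boldsymbol{x})A(\boldsymbol{x})=S^{-1}H$, the lower-triangularity of $S^{-1}H$ to localize the support on the first $Iq+q$ columns, and the factorization $B(\boldsymbol{y})=H^{-1}SX_{[q]}(\boldsymbol{y})$ to collapse the product. The threshold $n\geq Iq+q-1$ is justified exactly as in the paper's count $Ip+p$.
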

For the recurrence matrices \(T_k\) and a given \(n \in \N_0\), we consider the following block decomposition:
\NiceMatrixOptions{cell-space-limits = 1.5pt}
\[
T_k = \left[\begin{NiceArray}{c|c}
	T_k^{[n]} & T_k^{[n,>n]} \\ \hline
	T_k^{[>n,n]} & T_k^{[>n]}
\end{NiceArray}\right],
\]
where \(T_k^{[n]}\), \(T_k^{[n,>n]}\), and \(T_k^{[>n,n]}\) are matrices of dimensions \((n+1) \times (n+1)\), \((n+1) \times \infty\), and \(\infty \times (n+1)\), respectively.

Before presenting general results concerning the structure of \(T_k^{[n,>n]}\) and \(T_k^{[>n,n]}\), let us first examine a representative example.

\begin{Example}
As in the earlier case study, take \(p = 2\) and \(q = 1\). The decomposition of the matrix \(T_1\) is then given by:
\[
	T_1 = \left[\begin{NiceMatrix}
	\ast & 1 & 0 & 0 & 0 & \Cdots[shorten-end=5pt]  \\
	\ast & \ast & \ast & 1 & 0 & \Cdots[shorten-end=5pt]  \\
	\boxed{\cdot} & \ast & \ast & \ast & 1 & 0 & \Cdots[shorten-end=5pt]  \\
	0 & \boxed{\cdot} & \ast & \ast & \ast & \ast & 1 & 0 & \Cdots[shorten-end=5pt]  \\
	0 & 0 & \ast & \ast & \ast & \ast & \ast & 1 & 0 & \Cdots[shorten-end=5pt]  \\
	0 & 0 & \ast & \ast & \ast & \ast & \ast & \ast & 1 & 0 & \Cdots[shorten-end=5pt]  \\
	0 & 0 & \boxed{\cdot} & \ast & \ast & \Cdots[shorten-end=5pt]  \\
	0 & 0 & 0 & \boxed{\cdot} & \ast & \Cdots[shorten-end=5pt]\\
	\Vdots[shorten-end=3pt] & \Vdots[shorten-end=3pt] & \Vdots[shorten-end=3pt] & \Vdots[shorten-end=3pt]
	\CodeAfter
	\tikz \draw [solid] (5-|1) -- (5-|last); 
    \tikz \draw [solid] (1-|5) -- (last-|5); 
\end{NiceMatrix}\right],
\]
where \(n = 3\). The matrices \(T_1^{[>3,3]}\) and \(T_1^{[3,>3]}\) are given by:
\begin{align*}
	T_1^{[>3,3]} & = \begin{bNiceMatrix}
		0 & 0 & \ast & \ast \\
		0 & 0 & \ast & \ast \\
		0 & 0 & \boxed{\cdot} & \ast \\
		0 & 0 & 0 & \boxed{\cdot} \\
		0 & 0 & 0 & 0 \\
		\Vdots[shorten-end=5pt] & \Vdots[shorten-end=5pt] & \Vdots[shorten-end=5pt] & \Vdots[shorten-end=5pt]
	\end{bNiceMatrix}, & T_1^{[3,>3]} & = \begin{bNiceMatrix}
		0 & \Cdots[shorten-end=7pt] \\
		1 & 0 & \Cdots[shorten-end=7pt] \\
		\ast & \ast & 1 & 0 & \Cdots[shorten-end=7pt]
	\end{bNiceMatrix}.
\end{align*}
Both matrices can equivalently be expressed as:
\begin{align*}
	T_1^{[>3,3]} & = \left[\begin{NiceArray}{c|c}
		0 & 	\mathscr{T}_1^{[>3,3]}	\\ \hline
		0 & 0
	\end{NiceArray}\right], & \mathscr{T}_1^{[>3,3]} & = \begin{bNiceMatrix}
		\ast & \ast \\
		\ast & \ast \\
		\boxed{\cdot} & \ast \\
		0 & \boxed{\cdot}
	\end{bNiceMatrix}, \\
	T_1^{[3,>3]} & = \left[\begin{NiceArray}{c|c}
		0 & 	0	\\ \hline
		\mathscr{T}_1^{[>3,3]} & 0
	\end{NiceArray}\right], & \mathscr{T}_1^{[3,>3]} & = \begin{bNiceMatrix}
	1 & 0 & 0 \\
	\ast & \ast & 1
	\end{bNiceMatrix}.
\end{align*}
In both cases, the dimensions of the zero blocks are determined by the size of \(\mathscr{T}_1^{[>3,3]}\) and \(\mathscr{T}_1^{[3,>3]}\).
\end{Example}

In the general case, a similar decomposition holds for \(T_k^{[>n,n]}\) and \(T_k^{[n,>n]}\). For any arbitrary \(n \in \N_0\), we have the following characterization: for \(\mathscr{T}_k^{[>n,n]}\), the \((0,0)\) entry coincides with \(T_{n+1,(N+1)_{p;k}^-}\), as given by the recurrence relation:
\[
	x_k B^{(b)}_{n+1}(\boldsymbol{x}) = B^{(b)}_{(n+1)^+_{q;k}}(\boldsymbol{x}) + \sum_{i=(N+1)_{p;k}^-}^{(n+1)_{q;k}^+-1} T_{k;\,n+1,i}B^{(b)}_{i}(\boldsymbol{x}),
\]
introduced in Proposition \ref{Recurrence Relation-GeneralCase}. Likewise, the bottom-right entry \(T_{k;\,n^+_{p;k},n}\) (equivalently, \(\frac{H_{n^+_{p;k}}}{H_n}\)) is given by:
\[
	x_k A^{(a)}_n(\boldsymbol{x}) = T_{k;\,n^+_{p;k},n}A^{(a)}_{n^+_{p;k}}(\boldsymbol{x}) + \sum_{i=N^-_{q;k}}^{n^+_{p;k}-1} T_{k;\,i,n}A^{(a)}_{i}(\boldsymbol{x}).
\]
The dimensions of \(\mathscr{T}_k^{[>n,n]}\) are \((n^+_{p;k}-n+1) \times (n-(N+1)_{p;k}^-+1)\). Similar reasoning for \(\mathscr{T}_k^{[n,>n]}\) shows that the \((0,0)\) entry is \(T_{(n+1)_p^-,n+1}\) (always equal to \(1\)), and the bottom-right entry is \(T_{n,n^+_{q;k}}\). The size of this matrix is \(((N+1)^-_{q;k}-n+1) \times (n^+_{q;k}-n)\).

We also introduce the notation:
\[
\begin{aligned}
	A &= \begin{bNiceArray}{c|c}
		A^{[n]} & A^{[>n]}
	\end{bNiceArray}, &
	B &= \left[\begin{NiceArray}{c}
		B^{[n]} \\ \hline B^{[>n]}
	\end{NiceArray}\right],
\end{aligned}
\]
where
\[\begin{aligned}
    A^{[>n]} & = \begin{bNiceArray}{c|c}
        \mathscr{A}^{[>n;k]}(\boldsymbol{x}) & A^{[>>n;k]}(\boldsymbol{x})
    \end{bNiceArray}, & \mathscr{A}^{[>n;k]}(\boldsymbol{x}) & = \begin{bNiceMatrix}
    A_{n+1}^{(1)}(\boldsymbol{x}) & \Cdots & A_{n^+_{p;k}}^{(1)}(\boldsymbol{x}) \\
    \Vdots & & \Vdots \\
    A_{n+1}^{(p)}(\boldsymbol{x}) & \Cdots & A_{n^+_{p;k}}^{(p)}(\boldsymbol{x})
    \end{bNiceMatrix}, \\
     A^{[n]} & = \begin{bNiceArray}{c|c}
         A^{[<n;k]}(\boldsymbol{x}) & \mathscr{A}^{[n;k]}(\boldsymbol{x})
    \end{bNiceArray}, & \mathscr{A}^{[n;k]}(\boldsymbol{x}) & = \begin{bNiceMatrix}
    A_{(N+1)_{p;k}^-}^{(1)}(\boldsymbol{x}) & \Cdots & A_{n}^{(1)}(\boldsymbol{x}) \\
    \Vdots & & \Vdots \\
    A_{(N+1)_{p;k}^-}^{(p)}(\boldsymbol{x}) & \Cdots & A_{n}^{(p)}(\boldsymbol{x})
    \end{bNiceMatrix}.
\end{aligned}\]
For \(B\), we define:
\[\begin{aligned}
B^{[>n]} & = \begin{bNiceArray}{c} 
        \mathscr{B}^{[>n;k]}(\boldsymbol{x}) \\ \hline
        B^{[>>n;k]}(\boldsymbol{x})
    \end{bNiceArray}, & \mathscr{B}^{[>n;k]}(\boldsymbol{x}) & = \begin{bNiceMatrix}
        B_{n+1}^{(1)}(\boldsymbol{x}) & \Cdots & B_{n+1}^{(q)}(\boldsymbol{x}) \\
        \Vdots & & \Vdots \\
        B_{n_{q;k}^+}^{(1)}(\boldsymbol{x}) & \Cdots & B_{n_{q;k}^+}^{(q)}(\boldsymbol{x})
    \end{bNiceMatrix}, \\
    B^{[n]} & = \begin{bNiceArray}{c} 
        B^{[<n;k]}(\boldsymbol{x}) \\ \hline
        \mathscr{B}^{[n;k]}(\boldsymbol{x})
    \end{bNiceArray}, & \mathscr{B}^{[n;k]}(\boldsymbol{x}) & = \begin{bNiceMatrix}
        B_{(N+1)^-_{q;k}}^{(1)}(\boldsymbol{x}) & \Cdots & B_{(N+1)^-_{q;k}}^{(q)}(\boldsymbol{x}) \\
        \Vdots & & \Vdots \\
        B_{n}^{(1)}(\boldsymbol{x}) & \Cdots & B_{n}^{(q)}(\boldsymbol{x})
    \end{bNiceMatrix}.
\end{aligned}\]

\begin{Theorem}[Christoffel--Darboux formula]\label{KcomoT}
For $k\in\{1,2\}$, the following matrix Christoffel–Darboux identities hold:
\[
(x_k - y_k)K^{[n]}(\boldsymbol{x}, \boldsymbol{y}) = \mathscr{A}^{[>n;k]}(\boldsymbol{x})\mathscr{T}_k^{[>n,n]}\mathscr{B}^{[n;k]}(\boldsymbol{y}) - \mathscr{A}^{[n;k]}(\boldsymbol{x})\mathscr{T}_k^{[n,>n]}\mathscr{B}^{[>n;k]}(\boldsymbol{y}).
\]
\end{Theorem}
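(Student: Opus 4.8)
The plan is to derive the identity from the alternative representation of the kernel in Lemma~\ref{Rm: AlternativeExpCDk} together with the recurrence relations of Theorem~\ref{Recurrence Relation-GeneralCase}, following the classical telescoping/commutator argument behind Christoffel--Darboux formulas. First I would write $K^{[n]}(\boldsymbol{x},\boldsymbol{y}) = A(\boldsymbol{x})\,\Pi_n\,B(\boldsymbol{y})$ with $\Pi_n$ the diagonal projection onto the first $n+1$ coordinates, multiply by $x_k-y_k$, and use $x_k A(\boldsymbol{x}) = A(\boldsymbol{x})T_k$ and $x_k B(\boldsymbol{y}) = T_k B(\boldsymbol{y})$ to obtain
\[
(x_k-y_k)\,K^{[n]}(\boldsymbol{x},\boldsymbol{y}) = A(\boldsymbol{x})\,\bigl(T_k\Pi_n - \Pi_n T_k\bigr)\,B(\boldsymbol{y}).
\]
One has to note here that the products are legitimate: $T_k\Pi_n-\Pi_nT_k$ has only finitely many nonzero entries, all clustered near the index $n$, so only finitely many columns of $A(\boldsymbol{x})$ and rows of $B(\boldsymbol{y})$ actually contribute.

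Next I would evaluate the commutator blockwise. Writing $T_k$ in the $2\times2$ block form $\left[\begin{smallmatrix}T_k^{[n]} & T_k^{[n,>n]}\\ T_k^{[>n,n]} & T_k^{[>n]}\end{smallmatrix}\right]$ adapted to the splitting $\N_0 = \{0,\dots,n\}\sqcup\{>n\}$ and $\Pi_n = \left[\begin{smallmatrix}I&0\\0&0\end{smallmatrix}\right]$, the top-left blocks of $T_k\Pi_n$ and $\Pi_nT_k$ both equal $T_k^{[n]}$ and cancel, leaving
\[
T_k\Pi_n-\Pi_nT_k = \begin{bNiceMatrix} 0 & -T_k^{[n,>n]}\\ T_k^{[>n,n]} & 0\end{bNiceMatrix}.
\]
Plugging this back in, together with $A(\boldsymbol{x}) = \bigl[A^{[n]}(\boldsymbol{x})\mid A^{[>n]}(\boldsymbol{x})\bigr]$ and $B(\boldsymbol{y}) = \left[\begin{smallmatrix}B^{[n]}(\boldsymbol{y})\\ B^{[>n]}(\boldsymbol{y})\end{smallmatrix}\right]$, produces
\[
(x_k-y_k)\,K^{[n]}(\boldsymbol{x},\boldsymbol{y}) = A^{[>n]}(\boldsymbol{x})\,T_k^{[>n,n]}\,B^{[n]}(\boldsymbol{y}) - A^{[n]}(\boldsymbol{x})\,T_k^{[n,>n]}\,B^{[>n]}(\boldsymbol{y}).
\]

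The last step is to compress the two products down to the finite blocks $\mathscr{T}_k^{[>n,n]}$ and $\mathscr{T}_k^{[n,>n]}$. Using Proposition~\ref{Prop: Posiciones T_1 columns-GeneralCase} I would argue that $T_k^{[>n,n]}$ vanishes except in rows $n+1,\dots,n^+_{p;k}$ and columns $(N+1)^-_{p;k},\dots,n$, where it coincides with $\mathscr{T}_k^{[>n,n]}$; consequently multiplication on the left by $A^{[>n]}(\boldsymbol{x})$ extracts precisely the block $\mathscr{A}^{[>n;k]}(\boldsymbol{x})$ and multiplication on the right by $B^{[n]}(\boldsymbol{y})$ extracts $\mathscr{B}^{[n;k]}(\boldsymbol{y})$, so the first term equals $\mathscr{A}^{[>n;k]}(\boldsymbol{x})\,\mathscr{T}_k^{[>n,n]}\,\mathscr{B}^{[n;k]}(\boldsymbol{y})$. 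Symmetrically, Proposition~\ref{Prop: Posiciones T_1 rows-GeneralCase} shows that $T_k^{[n,>n]}$ is supported on rows $(N+1)^-_{q;k},\dots,n$ and columns $n+1,\dots,n^+_{q;k}$, with nonzero part $\mathscr{T}_k^{[n,>n]}$, and the second term collapses to $\mathscr{A}^{[n;k]}(\boldsymbol{x})\,\mathscr{T}_k^{[n,>n]}\,\mathscr{B}^{[>n;k]}(\boldsymbol{y})$. This is exactly the claimed formula.

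I expect the first two steps to be essentially formal; the real work — and the main obstacle — is the bookkeeping in the last step: reading off the precise row and column supports of $T_k^{[>n,n]}$ and $T_k^{[n,>n]}$ from Propositions~\ref{Prop: Posiciones T_1 rows-GeneralCase} and~\ref{Prop: Posiciones T_1 columns-GeneralCase}, distinguishing the regular indices $n\in\N_0\setminus J_{r;k}$ (where the boundary entry, equal to $1$ or to $H_{n}/H_{N^-_{r;k}}$, is genuinely nonzero) from the remaining ones, and verifying that the block sizes produced by the compression match exactly those declared for $\mathscr{A}^{[\,\cdot\,;k]}$, $\mathscr{B}^{[\,\cdot\,;k]}$ and $\mathscr{T}_k^{[\,\cdot\,]}$, so that no spurious zero rows or columns survive in the final products.
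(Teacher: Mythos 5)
Your proposal is correct and follows essentially the same route as the paper: the commutator $A(\boldsymbol{x})\bigl(T_k\Pi_n-\Pi_nT_k\bigr)B(\boldsymbol{y})$ is just a repackaging of the paper's step of expanding $(A(\boldsymbol{x})T_k)^{[n]}=x_kA^{[n]}(\boldsymbol{x})$ and $(T_kB(\boldsymbol{y}))^{[n]}=y_kB^{[n]}(\boldsymbol{y})$ in block form and cancelling the common $A^{[n]}T_k^{[n]}B^{[n]}$ term. The final compression of $T_k^{[>n,n]}$ and $T_k^{[n,>n]}$ to the finite blocks $\mathscr{T}_k^{[>n,n]}$ and $\mathscr{T}_k^{[n,>n]}$ via their known supports is exactly how the paper concludes as well.
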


\begin{proof}
From Proposition \ref{Recurrence Relation-GeneralCase}, we have \((A(\boldsymbol{x})T_k)^{[n]} = x_k A^{[n]}(\boldsymbol{x})\) and \((T_k B(\boldsymbol{y}))^{[n]} = y_k B^{[n]}(\boldsymbol{y})\). Expanding:
\[
\begin{aligned}
	x_k A^{[n]}(\boldsymbol{x}) &= A^{[n]}(\boldsymbol{x}) T_k^{[n]} + A^{[>n]}(\boldsymbol{x}) T_k^{[>n,n]}, \\
	y_k B^{[n]}(\boldsymbol{y}) &= T_k^{[n]} B^{[n]}(\boldsymbol{y}) + T_k^{[n,>n]} B^{[>n]}(\boldsymbol{y}).
\end{aligned}
\]
Thus,
\[
\begin{aligned}
	(x_k - y_k) K^{[n]}(\boldsymbol{x}, \boldsymbol{y}) &= A^{[>n]}(\boldsymbol{x}) T_k^{[>n,n]} B^{[n]}(\boldsymbol{y}) - A^{[n]}(\boldsymbol{x}) T_k^{[n,>n]} B^{[>n]}(\boldsymbol{y}) \\
	&= \mathscr{A}^{[>n;k]}(\boldsymbol{x}) \mathscr{T}_k^{[>n,n]} \mathscr{B}^{[n;k]}(\boldsymbol{y}) - \mathscr{A}^{[n;k]}(\boldsymbol{x}) \mathscr{T}_k^{[n,>n]} \mathscr{B}^{[>n;k]}(\boldsymbol{y}).
\end{aligned}
\]
\end{proof}
\begin{Example}
    Let us continue with the example, explicitly: 
 \[   \begin{aligned}
        \mathscr{T}_1^{[>3,3]} & = \begin{bNiceMatrix}
            T_{1;4,2} & T_{1;4,3} \\
            T_{1;5,2} & T_{1;5,3} \\
            T_{1;6,2} & T_{1;6,3} \\
            0 & T_{1;7,3} 
        \end{bNiceMatrix} & \mathscr{T}_1^{[3,>3]} & = \begin{bNiceMatrix}
            1 & 0 & 0 \\
            T_{1;3,4} & T_{1;3,5} & 1
        \end{bNiceMatrix}.
    \end{aligned}\]
    Lastly,
    \begin{multline*}
        (x_1 - y_1) K_a^{[3]}(\boldsymbol{x}, \boldsymbol{y}) = \begin{bNiceMatrix}
            A_4^{(a)}(\boldsymbol{x}) & A_5^{(a)}(\boldsymbol{x}) &A_6^{(a)}(\boldsymbol{x}) & A_7^{(a)}(\boldsymbol{x}) 
        \end{bNiceMatrix} \begin{bNiceMatrix}
            T_{1;4,2} & T_{1;4,3} \\
            T_{1;5,2} & T_{1;5,3} \\
            T_{1;6,2} & T_{1;6,3} \\
            0 & T_{1;7,3} 
        \end{bNiceMatrix} \begin{bNiceMatrix}
            B_2(\boldsymbol{y}) \\
            B_3(\boldsymbol{y})
        \end{bNiceMatrix} \\ - \begin{bNiceMatrix}
            A_2^{(a)}(\boldsymbol{x}) & A_3^{(a)}(\boldsymbol{x})
        \end{bNiceMatrix} \begin{bNiceMatrix}
            1 & 0 & 0 \\
            T_{1;3,4} & T_{1;3,5} & 1
        \end{bNiceMatrix} \begin{bNiceMatrix}
            B_4(\boldsymbol{y}) \\
            B_5(\boldsymbol{y}) \\
            B_6(\boldsymbol{y})
        \end{bNiceMatrix}.
    \end{multline*}
\end{Example}
Finally, we establish an analogous Aitken--Berg--Collar (ABC) theorem for bivariate multiple orthogonal polynomials of mixed type.

\begin{Lemma} \label{Rm: Truncation LU}
If a semi-infinite matrix admits an LU factorization, then its \(n\)-th truncation satisfies:
\[
	\mathscr{M}^{[n]} = \mathscr{L}^{[n]} \mathscr{U}^{[n]}.
\]
\end{Lemma}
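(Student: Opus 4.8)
The plan is to exploit the block-triangular structure of the factors $\mathscr{L}$ and $\mathscr{U}$ directly. Write the semi-infinite factorization $\mathscr{M} = \mathscr{L}\mathscr{U}$ in the $2\times 2$ block form induced by the splitting into the first $n+1$ rows/columns and the remaining ones:
\[
\mathscr{M} = \left[\begin{NiceArray}{c|c}
	\mathscr{M}^{[n]} & \mathscr{M}^{[n,>n]} \\ \hline
	\mathscr{M}^{[>n,n]} & \mathscr{M}^{[>n]}
\end{NiceArray}\right], \qquad
\mathscr{L} = \left[\begin{NiceArray}{c|c}
	\mathscr{L}^{[n]} & 0 \\ \hline
	\mathscr{L}^{[>n,n]} & \mathscr{L}^{[>n]}
\end{NiceArray}\right], \qquad
\mathscr{U} = \left[\begin{NiceArray}{c|c}
	\mathscr{U}^{[n]} & \mathscr{U}^{[n,>n]} \\ \hline
	0 & \mathscr{U}^{[>n]}
\end{NiceArray}\right].
\]
The crucial point is that $\mathscr{L}$ being lower triangular forces the top-right block of $\mathscr{L}$ to vanish, and $\mathscr{U}$ being upper triangular forces the bottom-left block of $\mathscr{U}$ to vanish; this is exactly what makes the truncation multiplicative.

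First I would record that these block decompositions are legitimate despite the matrices being semi-infinite: the $(1,1)$ blocks are genuine finite $(n+1)\times(n+1)$ matrices, and all the block products appearing below involve at least one finite factor or are block-triangular row-by-column contractions that terminate, so no convergence issue arises. Then I would simply multiply out the block product $\mathscr{L}\mathscr{U}$ and read off the $(1,1)$ entry: by the vanishing of the off-diagonal triangular blocks, the $(1,1)$ block of $\mathscr{L}\mathscr{U}$ is $\mathscr{L}^{[n]}\mathscr{U}^{[n]} + 0\cdot 0 = \mathscr{L}^{[n]}\mathscr{U}^{[n]}$. Comparing with the $(1,1)$ block of $\mathscr{M}$ gives $\mathscr{M}^{[n]} = \mathscr{L}^{[n]}\mathscr{U}^{[n]}$, which is the claim.

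There is essentially no obstacle here; the only thing to be careful about is making precise what ``truncation'' means for each factor, namely that $\mathscr{L}^{[n]}$ and $\mathscr{U}^{[n]}$ are themselves the $n$-th truncations of $\mathscr{L}$ and $\mathscr{U}$ (so that the statement is self-referential in the intended way), and that the $(1,1)$ block of a lower- resp.\ upper-triangular semi-infinite matrix remains lower- resp.\ upper-triangular — so that if one further demands $\mathscr{L}^{[n]}$ lower unitriangular and $\mathscr{U}^{[n]}$ upper triangular, the truncated factorization inherits the normalization of the infinite one. I would state this inheritance of triangularity and normalization explicitly as part of the proof, since it is what makes $\mathscr{M}^{[n]} = \mathscr{L}^{[n]}\mathscr{U}^{[n]}$ the genuine LU factorization of $\mathscr{M}^{[n]}$ rather than merely some factorization, and it is exactly what is needed for the ABC-type theorem that follows.
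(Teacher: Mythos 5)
Your argument is correct, and it is the standard one. Note that the paper states this lemma without any proof at all (it is treated as a known fact about Gauss--Borel factorizations), so there is no authorial argument to compare against; your block decomposition, with the top-right block of $\mathscr{L}$ and the bottom-left block of $\mathscr{U}$ vanishing by triangularity so that the $(1,1)$ block of $\mathscr{L}\mathscr{U}$ reduces to the finite product $\mathscr{L}^{[n]}\mathscr{U}^{[n]}$, is exactly the justification one would supply. Your closing remarks are also well taken: the entrywise sums $\sum_{k\le\min(i,j)}$ are finite precisely because the order is lower-times-upper (the analogous statement for a UL product would involve sums over $k\ge\max(i,j)$ and fail), and the inheritance of triangularity and normalization by the truncations is what the subsequent Aitken--Berg--Collar theorem actually uses when it identifies $(\mathscr{M}^{[n]})^{-1}$ with $(\bar{S}^\top)^{[n]}(H^{-1}S)^{[n]}$.
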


\begin{Theorem}[Aitken--Berg--Collar]
The following identity holds:
\[
	K^{[n]}(\boldsymbol{x}, \boldsymbol{y}) = \left(X_{[p]}(\boldsymbol{x})^\top\right)^{[n]} \left( \mathscr{M}^{[n]} \right)^{-1} \left(X_{[q]}(\boldsymbol{y})\right)^{[n]},
\]
where the superscript \({}^{[n]}\) denotes the \(n\)-th truncation of a semi-infinite matrix, or the appropriate truncation to a \(r \times n\) or \(n \times r\) matrix when applied to a \(r \times \infty\) or \(\infty \times r\) matrix, respectively.
\end{Theorem}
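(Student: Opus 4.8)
The plan is to obtain the formula as an almost immediate consequence of the Gauss--Borel factorization $\mathscr{M} = S^{-1} H \bar{S}^{-\top}$ together with the truncation Lemma~\ref{Rm: Truncation LU}, using repeatedly that for triangular matrices the $n$-th truncation $(\cdot)^{[n]}$ commutes with both inversion and multiplication by a diagonal matrix.

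First I would regard $\mathscr{M} = (S^{-1})\,(H\bar{S}^{-\top})$ as a genuine LU factorization: the factor $S^{-1}$ is lower unitriangular, and $H\bar{S}^{-\top}$ is upper triangular because $H$ is diagonal and $\bar{S}^{-\top}$ is upper unitriangular. By Lemma~\ref{Rm: Truncation LU}, $\mathscr{M}^{[n]} = (S^{-1})^{[n]}(H\bar{S}^{-\top})^{[n]}$, and since the factors are triangular this equals $(S^{[n]})^{-1} H^{[n]} (\bar{S}^{[n]})^{-\top}$. This matrix is invertible (the leading principal minors of $\mathscr{M}$ being nonzero, or directly because $S^{[n]}$, $H^{[n]}$, $\bar{S}^{[n]}$ are all invertible), whence $\bigl(\mathscr{M}^{[n]}\bigr)^{-1} = (\bar{S}^{[n]})^{\top}\,(H^{[n]})^{-1}\,S^{[n]}$.

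Next I would truncate the two polynomial families using their expressions from Definition~\ref{Def: OrthogonalPoly}. Since $B(\boldsymbol{x}) = (H^{-1}S)\,X_{[q]}(\boldsymbol{x})$ and $H^{-1}S$ is lower triangular, $B^{[n]}(\boldsymbol{x})$ depends only on the corresponding truncation of the monomial matrix, and in fact $B^{[n]}(\boldsymbol{x}) = (H^{[n]})^{-1}S^{[n]}\bigl(X_{[q]}(\boldsymbol{x})\bigr)^{[n]}$; dually, since $A(\boldsymbol{x}) = X_{[p]}^{\top}(\boldsymbol{x})\,\bar{S}^{\top}$ and $\bar{S}^{\top}$ is upper triangular, $A^{[n]}(\boldsymbol{x}) = \bigl(X_{[p]}^{\top}(\boldsymbol{x})\bigr)^{[n]}(\bar{S}^{[n]})^{\top}$.

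Finally I would use $K^{[n]}(\boldsymbol{x},\boldsymbol{y}) = A^{[n]}(\boldsymbol{x})\,B^{[n]}(\boldsymbol{y})$, which is immediate from the definition of the kernel (equivalently, from Lemma~\ref{Rm: AlternativeExpCDk}: writing $K^{[n]} = A(\boldsymbol{x})\Pi_n B(\boldsymbol{y})$ and noting that $A(\boldsymbol{x})\Pi_n$ involves only $A^{[n]}(\boldsymbol{x})$ and $\Pi_n B(\boldsymbol{y})$ only $B^{[n]}(\boldsymbol{y})$). Substituting the two truncated expressions and recognizing the middle factor $(\bar{S}^{[n]})^{\top}(H^{[n]})^{-1}S^{[n]}$ as $\bigl(\mathscr{M}^{[n]}\bigr)^{-1}$ yields $K^{[n]}(\boldsymbol{x},\boldsymbol{y}) = \bigl(X_{[p]}^{\top}(\boldsymbol{x})\bigr)^{[n]}\bigl(\mathscr{M}^{[n]}\bigr)^{-1}\bigl(X_{[q]}(\boldsymbol{y})\bigr)^{[n]}$, as claimed. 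The main difficulty is purely notational: one must keep track of whether the $(\cdot)^{[n]}$ truncation of the non-square matrices $A$, $B$, $X_{[p]}^{\top}$, $X_{[q]}$ is taken on the row or column side so that all products match the block/scalar index conventions, and verify each instance of ``truncation of a triangular product equals product of truncations''. No analytic input is needed; the entire content is the compatibility of the LU factorization of $\mathscr{M}$ with truncation, exactly as in the classical univariate ABC theorem.
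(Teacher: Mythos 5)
Your proposal is correct and follows essentially the same route as the paper: both rest on writing $K^{[n]}$ as a product of truncations of $A=X_{[p]}^\top\bar S^\top$ and $B=H^{-1}SX_{[q]}$, using the triangularity of $S$ and $\bar S$ so that truncation commutes with the products and inverses involved, and invoking Lemma~\ref{Rm: Truncation LU} to identify the middle factor $(\bar S^{[n]})^{\top}(H^{[n]})^{-1}S^{[n]}$ with $(\mathscr{M}^{[n]})^{-1}$. The paper merely phrases the same computation via the projector $\Pi_n$ (using $\Pi_n^2=\Pi_n$, which it misdescribes as ``nilpotent'') and explicit zero-block decompositions of $\bar S^\top\Pi_n$ and $\Pi_n H^{-1}S$, so the difference is purely presentational.
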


\begin{proof}
Using Lemma \ref{Rm: AlternativeExpCDk}, we have:
\[
	K^{[n]}(\boldsymbol{x}, \boldsymbol{y}) = A(\boldsymbol{x}) \Pi_n B(\boldsymbol{y}) = A(\boldsymbol{x}) \Pi_n \Pi_n B(\boldsymbol{y}),
\]
since \(\Pi_n\) is nilpotent. Decomposing \(A(x)\) and \(B(x)\), we obtain:
\[
	K^{[n]}(\boldsymbol{x}, \boldsymbol{y}) = X_{[p]}^\top(\boldsymbol{x}) \bar{S}^\top \Pi_n \Pi_n H^{-1} S X_{[q]}(\boldsymbol{y}).
\]
It is easy to verify:
\begin{align*}
	\bar{S}^\top \Pi_n & = \left[\begin{NiceArray}{c|c}
		(\bar{S}^\top)^{[n]} & 0 \\ \hline
		0 & 0
	\end{NiceArray}\right], &
	\Pi_n H^{-1} S & = \left[\begin{NiceArray}{c|c}
		(H^{-1} S)^{[n]} & 0 \\ \hline
		0 & 0
	\end{NiceArray}\right],
\end{align*}
so the CD kernels can be rewritten as:
\[
	K^{[n]}(\boldsymbol{x}, \boldsymbol{y}) = X_{[p]}^\top(\boldsymbol{x}) \left[\begin{NiceArray}{c|c}
		(\mathscr{M}^{[n]})^{-1} & 0 \\ \hline
		0 & 0
	\end{NiceArray}\right] X_{[q]}(\boldsymbol{y}),
\]
where Lemma \ref{Rm: Truncation LU} has been used. The result follows.
\end{proof}
\section{A Study Case: Bivariate Jacobi--Piñeiro of the Mixed-Type on the Triangle}
We now specialize to a concrete family of weights, namely the Jacobi–Piñeiro class on the triangular domain. Our aims are the following:
\begin{itemize}
    \item To specify the measure matrix and the associated monomials.
    \item To construct a truncation of the moment matrix and to perform the Gauss–Borel factorization.
    \item To construct both families of orthogonal polynomials up to the prescribed truncation order.
\end{itemize}

Fix parameters \(\alpha,\beta_a^p,\beta_b^q,\gamma_a^p,\gamma_b^q>-1\), for \(a \in \{1, \dots, p\}\) and \(b \in \{1, \dots, q\}\), and consider the triangle support
\[
\Delta := \{(x,y)\in\mathbb R^2 : x>0,\ y>0,\ x+y<1\}.
\]
We define two bivariate Jacobi–Piñeiro weight functions, \(w_{(q)}(\boldsymbol{x})\) and \(v_{(p)}(\boldsymbol{x})\),
\[\begin{aligned}
    w_{(q)}(\boldsymbol x) & \coloneq \begin{bNiceMatrix} y^{\gamma_1^q}(1-x-y)^{\beta_1^q}\\ \Vdots \\ y^{\gamma_q^q}(1-x-y)^{\beta_q^q} \end{bNiceMatrix}, 
    & 
    v_{(p)}(\boldsymbol x) & \coloneq \begin{bNiceMatrix} y^{\gamma_1^p}(1-x-y)^{\beta_1^p} & \Cdots & y^{\gamma_p^p}(1-x-y)^{\beta_p^p} \end{bNiceMatrix}.
\end{aligned}\]
The \(q\times p\) measure is then given by
\[
\d\mu(\boldsymbol{x})
=
x^{\alpha} w_{(q)}(\boldsymbol x) v_{(p)}(\boldsymbol x)\d x\d y .
\]

The moments are defined by
\[\begin{aligned}
    m^{i,j}_{b,a} & \coloneq \int_\Delta x^i y^j \d\mu_{b,a}(\boldsymbol x),
    & 
    m^{i,j} & \coloneq \int_\Delta x^i y^j \d\mu(\boldsymbol x),
\end{aligned}\]
and may be computed explicitly as
\[
    m^{i,j}_{b,a} = 
    \frac{\Gamma\left( \alpha + i + 1 \right)
    \Gamma\left( j + \gamma_b^q + \gamma_a^p + 1\right)
    \Gamma\left( \beta_b^q + \beta_a^p + 1\right)}
    {\Gamma\left( \alpha + i + j + \gamma_b^q + \gamma_a^p + \beta_b^q + \beta_a^p + 3 \right)},
\]
where \(\Gamma(x)\) denotes the Euler Gamma function.

Consider a truncation of the moment matrix of size \(npq \times npq\), with \(n \in \mathbb N_0\):
\[
\mathscr{M}^{[npq]} = \int_\Delta \left(X_{[q]}(\boldsymbol{x})\right)^{[npq]} \, d\mu(\boldsymbol{x}) \, \left(X_{[p]}(\boldsymbol{x})^\top\right)^{[npq]}.
\]
For a given choice of parameters, whenever the Gauss–Borel factorization exists at this truncation, we obtain
\[
\mathscr{M}^{[npq]} = \left( S^{-1} \right)^{[npq]} \, H^{[npq]} \, \left( \bar{S}^{-\top} \right)^{[npq]}.
\]

We have developed Maple code that constructs this moment matrix, performs the truncated Gauss–Borel factorization, and computes the corresponding families of orthogonal polynomials. The code, available to any user with an active Maple licence, generates the polynomial families once the parameters \(p\), \(q\), \(n\), and the various exponents are specified, provided that the Gauss–Borel factorization exists at the chosen truncation. We have examined three representative examples:
\begin{itemize}
    \item \underline{Multiple case}: \(q=1, p=2, n=5\), with exponents \(\alpha=0\), \(\beta_1^p=1\), \(\beta_2^p=0\), \(\gamma_1^p=\frac{1}{2}\), \(\gamma_2^p=1\). The measure is
    \[
\d\mu(\boldsymbol{x})
=
\begin{bmatrix}
	y\sqrt{1-x-y} & 1-x-y
\end{bmatrix}\d x\d y.
\]
    \item \underline{Matrix case}: \(q=2, p=2, n=4\), with exponents \(\alpha = 0\), \( \{ \gamma_b^q \}_{b=1}^2 = \{ 0, \frac{1}{2} \}\), \( \{ \beta_b^q \}_{b=1}^2 = \{ 0, 1 \}\), \( \{ \gamma_a^p \}_{a=1}^2 = \{ 1, \frac{1}{2} \}\), \( \{ \beta_a^p \}_{a=1}^2 = \{ 1, \frac{1}{2} \}\). The measure is
    \[
\d\mu(\boldsymbol{x})
	=
	\begin{bmatrix}
		y(1-x-y) & \sqrt{y}\sqrt{1-x-y}\\
		y^{3/2}(1-x-y)^2 & y(1-x-y)^{\frac{3}{2}}
	\end{bmatrix}\d x\d y.
	\]
    \item \underline{Multiple mixed-type case}: \(q=2, p=3, n=2\), with exponents \(\alpha = 0\), \(\{ \gamma_b^q \}_{b=1}^2 = \{\frac{1}{2},0\}\), \(\{ \beta_b^q \}_{b=1}^2 = \{1,\frac{1}{2}\}\), \(\{ \gamma_a^p \}_{a=1}^3 = \{0, \frac{1}{4}, \frac{1}{2}\}\), \(\{ \beta_a^p \}_{a=1}^3 = \{0, \frac{3}{2}, \frac{1}{4}\}\). The measure is
    \[
	\d\mu(\boldsymbol{x})
	=
	\begin{bmatrix}
		\sqrt{y}(1-x-y) & y^{\frac{3}{4}}(1-x-y)^{\frac{5}{2}} & y(1-x-y)^{\frac{5}{4}} \\
		\sqrt{1-x-y} & y^{\frac{1}{4}}(1-x-y)^{2} & \sqrt{y}(1-x-y)^{\frac{3}{4}}
	\end{bmatrix}\d x\d y.
	\]
\end{itemize}

For all three examples, the Gauss–Borel factorization exists up to the \(npq\) truncation. Numerical tests performed in Python confirm that the leading principal minors remain non-vanishing for larger values of \(n\), although numerical underflow occurs when these determinants reach magnitudes of order \(10^{-324}\), at which point Python treats them as zero. Consequently, only analytic methods can establish orthogonality for all \(n \in \mathbb N_0\). Furthermore, the parameters were chosen so that \(\gamma_i^r - \gamma_j^r \notin \mathbb Z\) and \(\beta_i^r - \beta_j^r \notin \mathbb Z\) for \(r \in \{p,q\}\), as these conditions are required for the univariate Jacobi–Piñeiro polynomials to exist.

Given the extension of the formulas involved we only reproduce here two examples.
\begin{enumerate}
	\item 
For the multiple non-mixed type case , we take \(q=1,\, p=2\), the truncated moment matrix \(\mathscr M^{[10]}\in \mathbb R^{10\times 10}\) is
\begin{align*}
\mathscr M^{[10]} =
\begin{bmatrix}
	\frac{8}{105} & \frac{1}{6} & \frac{16}{945} & \frac{1}{24} & \frac{32}{945} & \frac{1}{24} & \frac{64}{10395} & \frac{1}{60} & \frac{64}{10395} & \frac{1}{120}\\[2pt]
	\frac{16}{945} & \frac{1}{24} & \frac{64}{10395} & \frac{1}{60} & \frac{64}{10395} & \frac{1}{120} & \frac{128}{45045} & \frac{1}{120} & \frac{256}{135135} & \frac{1}{360}\\[2pt]
	\frac{32}{945} & \frac{1}{24} & \frac{64}{10395} & \frac{1}{120} & \frac{64}{3465} & \frac{1}{60} & \frac{256}{135135} & \frac{1}{360} & \frac{128}{45045} & \frac{1}{360}\\[2pt]
	\frac{64}{10395} & \frac{1}{60} & \frac{128}{45045} & \frac{1}{120} & \frac{256}{135135} & \frac{1}{360} & \frac{1024}{675675} & \frac{1}{210} & \frac{512}{675675} & \frac{1}{840}\\[2pt]
	\frac{64}{10395} & \frac{1}{120} & \frac{256}{135135} & \frac{1}{360} & \frac{128}{45045} & \frac{1}{360} & \frac{512}{675675} & \frac{1}{840} & \frac{512}{675675} & \frac{1}{1260}\\[2pt]
	\frac{64}{3465} & \frac{1}{60} & \frac{128}{45045} & \frac{1}{360} & \frac{512}{45045} & \frac{1}{120} & \frac{512}{675675} & \frac{1}{1260} & \frac{1024}{675675} & \frac{1}{840}\\[2pt]
	\frac{128}{45045} & \frac{1}{120} & \frac{1024}{675675} & \frac{1}{210} & \frac{512}{675675} & \frac{1}{840} & \frac{2048}{2297295} & \frac{1}{336} & \frac{4096}{11486475} & \frac{1}{1680}\\[2pt]
	\frac{256}{135135} & \frac{1}{360} & \frac{512}{675675} & \frac{1}{840} & \frac{512}{675675} & \frac{1}{1260} & \frac{4096}{11486475} & \frac{1}{1680} & \frac{1024}{3828825} & \frac{1}{3360}\\[2pt]
	\frac{128}{45045} & \frac{1}{360} & \frac{512}{675675} & \frac{1}{1260} & \frac{1024}{675675} & \frac{1}{840} & \frac{1024}{3828825} & \frac{1}{3360} & \frac{4096}{11486475} & \frac{1}{3360}\\[2pt]
	\frac{512}{45045} & \frac{1}{120} & \frac{1024}{675675} & \frac{1}{840} & \frac{1024}{135135} & \frac{1}{210} & \frac{4096}{11486475} & \frac{1}{3360} & \frac{2048}{2297295} & \frac{1}{1680}
\end{bmatrix}.
\end{align*}
Its $LU$ factorization is: 
\begin{multline*}
	\left( S^{-1} \right)^{[10]} = \begin{bmatrix}
		1 & 0 & 0 & 0 & 0 & 0 & 0 & 0 & 0 & 0\\[2pt]
		\frac{2}{9} & 1 & 0 & 0 & 0 & 0 & 0 & 0 & 0 & 0\\[2pt]
		\frac{4}{9} & -7 & 1 & 0 & 0 & 0 & 0 & 0 & 0 & 0\\[2pt]
		\frac{8}{99} & \frac{38}{55} & -\frac{42}{3575} & 1 & 0 & 0 & 0 & 0 & 0 & 0\\[2pt]
		\frac{8}{99} & -\frac{61}{55} & \frac{739}{3575} & -\frac{254}{87} & 1 & 0 & 0 & 0 & 0 & 0\\[2pt]
		\frac{8}{33} & -\frac{282}{55} & \frac{2558}{3575} & \frac{217}{87} & -\frac{48}{5} & 1 & 0 & 0 & 0 & 0\\[2pt]
		\frac{16}{429} & \frac{327}{715} & -\frac{49}{3575} & \frac{235}{203} & -\frac{1}{175} & -\frac{1}{1820} & 1 & 0 & 0 & 0\\[2pt]
		\frac{32}{1287} & -\frac{211}{715} & \frac{727}{10725} & -\frac{872}{609} & \frac{281}{525} & \frac{11}{5460} & -\frac{1367}{1011} & 1 & 0 & 0\\[2pt]
		\frac{16}{429} & -\frac{531}{715} & \frac{1327}{10725} & -\frac{249}{203} & -\frac{389}{525} & \frac{751}{5460} & \frac{2753}{1011} & \frac{375}{212} & 1 & 0\\[2pt]
		\frac{64}{429} & -\frac{2553}{715} & \frac{1751}{3575} & \frac{610}{203} & -\frac{2011}{175} & \frac{317}{260} & \frac{1777}{337} & \frac{11367}{1060} & \frac{451}{435} & 1
	\end{bmatrix}, \\	\left( H\bar{S}^{-\top} \right)^{[10]} = \begin{bmatrix}
		\frac{8}{105} & \frac{1}{6} & \frac{16}{945} & \frac{1}{24} & \frac{32}{945} & \frac{1}{24} & \frac{64}{10395} & \frac{1}{60} & \frac{64}{10395} & \frac{1}{120}\\[2pt]
		0 & \frac{1}{216} & \frac{32}{13365} & \frac{1}{135} & -\frac{128}{93555} & -\frac{1}{1080} & \frac{256}{173745} & \frac{1}{216} & \frac{128}{243243} & \frac{1}{1080}\\[2pt]
		0 & 0 & \frac{32}{2079} & \frac{1}{24} & -\frac{64}{10395} & -\frac{1}{120} & \frac{256}{27027} & \frac{1}{36} & \frac{512}{135135} & \frac{1}{180}\\[2pt]
		0 & 0 & 0 & \frac{29}{85800} & \frac{128}{4129125} & -\frac{61}{1287000} & \frac{512}{4601025} & \frac{163}{300300} & -\frac{9472}{161035875} & -\frac{47}{819000}\\[2pt]
		0 & 0 & 0 & 0 & -\frac{64}{1306305} & -\frac{1}{31320} & \frac{1024}{3918915} & \frac{1}{1218} & -\frac{2176}{19594575} & -\frac{37}{219240}\\[2pt]
		0 & 0 & 0 & 0 & 0 & -\frac{1}{1350} & \frac{512}{225225} & \frac{1}{140} & -\frac{1024}{1126125} & -\frac{29}{18900}\\[2pt]
		0 & 0 & 0 & 0 & 0 & 0 & -\frac{43136}{5226346125} & -\frac{1}{573300} & \frac{27136}{5226346125} & \frac{1}{573300}\\[2pt]
		0 & 0 & 0 & 0 & 0 & 0 & 0 & -\frac{53}{3566808} & -\frac{420352}{135482972625} & \frac{1}{1455840}\\[2pt]
		0 & 0 & 0 & 0 & 0 & 0 & 0 & 0 & \frac{3712}{338212875} & -\frac{1}{305280}\\[2pt]
		0 & 0 & 0 & 0 & 0 & 0 & 0 & 0 & 0 & \frac{61}{2192400}
	\end{bmatrix}.
\end{multline*}
The type–II polynomials are defined by
\begin{multline*}
	\left( B(\boldsymbol{x}) \right)^{[10]}  \coloneq 
	\left( S \right)^{[10]} \left( X_{[1]}(\bm x) \right)^{[10]}
= \\ \begin{bmatrix}
		1\\[2pt]
		-\frac{2}{9} + x\\[2pt]
		-2 + 7x + y\\[2pt]
		\frac{16}{325} - \frac{2176}{3575}x + \frac{42}{3575}y + x^{2}\\[2pt]
		\frac{20}{87} - \frac{184}{87}x - \frac{5}{29}y + \frac{254}{87}x^{2} + x y\\[2pt]
		\frac{32}{15} - \frac{56}{3}x - \frac{12}{5}y + \frac{383}{15}x^{2} + \frac{48}{5}x y + y^{2}\\[2pt]
		-\frac{8}{455} + \frac{146}{455}x - \frac{1}{455}y - \frac{293}{260}x^{2} + \frac{1}{91}x y + \frac{1}{1820}y^{2} + x^{3}\\[2pt]
		-\frac{1256}{35385} + \frac{19552}{35385}x + \frac{1528}{35385}y - \frac{20122}{11795}x^{2} - \frac{6366}{11795}x y - \frac{3}{2359}y^{2} + \frac{1367}{1011}x^{3} + x^{2}y\\[2pt]
		\frac{74}{795} - \frac{456}{265}x + \frac{6}{265}y + \frac{3161}{530}x^{2} + \frac{183}{530}x y - \frac{29}{212}y^{2} - \frac{3253}{636}x^{3} - \frac{375}{212}x^{2}y + x y^{2}\\[2pt]
		\frac{80}{261} - \frac{784}{145}x - \frac{8}{145}y + \frac{262}{15}x^{2} + \frac{748}{145}x y - \frac{16}{15}y^{2} - \frac{18883}{1305}x^{3} - \frac{1289}{145}x^{2}y - \frac{451}{435}x y^{2} + y^{3}
	\end{bmatrix}.
\end{multline*}
The type-I polynomials are defined as follows:
\[
    \left( A(\boldsymbol{x}) \right)^{[10]}
	\coloneq \left( X_{[2]}(\boldsymbol{x})^\top \right)^{[10]} \left( \bar{S}^\top H^{-1} \right)^{[10]} = \begin{bNiceMatrix}
		A_{1}^{(1)} & \Cdots & A_{10}^{(1)}\\[2pt]
		A_{1}^{(2)} & \Cdots & A_{10}^{(2)}
	\end{bNiceMatrix},
\]
whit explicit expressions given by,
\begin{align*}
	A_{1}^{(1)} &= \frac{105}{8},\ A_{2}^{(1)} = -\frac{945}{2},\ A_{3}^{(1)} = \frac{945}{16} + \frac{2079}{32}x,\ A_{4}^{(1)} = \frac{675675}{464} - \frac{7432425}{928}x,\\ A_{5}^{(1)} &= \frac{63063}{4} - \frac{423423}{32}x - \frac{1306305}{64}y,\ A_{6}^{(1)} = -\frac{3465}{32} + \frac{45045}{128}x + \frac{225225}{256}y,\\
	A_{7}^{(1)} &= \frac{2599772175}{5392} - \frac{3832653825}{10784}x
	- \frac{8710576875}{21568}y - \frac{5226346125}{43136}x^{2},\\
	A_{8}^{(1)} &= \frac{672945273}{848} - \frac{2344997655}{3392}x
	- \frac{4460130675}{6784}y + \frac{48243195}{3392}x^{2},\\
	A_{9}^{(1)} &= \frac{69366297}{464} - \frac{183167985}{928}x
	- \frac{238363125}{1856}y + \frac{56921865}{928}x^{2}
	+ \frac{338212875}{3712}x y,\\
	A_{10}^{(1)} &= \frac{471936465}{7808} - \frac{1159233075}{15616}x
	- \frac{1525899375}{31232}y + \frac{225900675}{15616}x^{2}
	+ \frac{670044375}{62464}x y,\\[4pt]
	A_{1}^{(2)} &= 0,\ A_{2}^{(2)}= 216,\	A_{3}^{(2)} = -\frac{168}{5},\ A_{4}^{(2)}= -\frac{17160}{29} + \frac{85800}{29}x,\ A_{5}^{(2)}= -2184 + 1872x,\\
	A_{6}^{(2)} &= 240 - 270x - 1350y,\ A_{7}^{(2)} = -\frac{3712800}{337} + \frac{8353800}{337}x
	- \frac{125307000}{337}y,\\
	A_{8}^{(2)} &= -\frac{1120392}{53} + \frac{4176900}{53}x
	- \frac{32080860}{53}y - \frac{3566808}{53}x^{2},\\
	A_{9}^{(2)} &= -\frac{142288}{29} + \frac{666400}{29}x
	- \frac{3104640}{29}y - \frac{551712}{29}x^{2},\\
	A_{10}^{(2)} &= -\frac{65670}{61} + \frac{108150}{61}x
	- \frac{2976750}{61}y - \frac{35280}{61}x^{2}
	+ \frac{2192400}{61}x y.
\end{align*}
Using the Maple implementation, biorthogonality can be verified:
\[
\int_{\Delta} \left( B(\boldsymbol{x}) \right)^{[10]}\, \d \mu(\boldsymbol x) \left( A(\boldsymbol{x}) \right)^{[10]}  = I_{10}.
\]
\item 
We now turn to the second example. With \(q=2,\, p=2\) and the previously specified parameters, the first matrix polynomials associated with \(B(x)\) are
\begin{align*}
    B_{(0,0)}(\boldsymbol{x})  & = \begin{bNiceMatrix}
        1 & 0 
\\
 -\frac{256}{1155} & 1 
    \end{bNiceMatrix}, & B_{(1,0)}(\boldsymbol{x}) & = \begin{bNiceMatrix}
        \frac{-\mathscr{C}_1}{5}\left(-11+5 \pi\right)+x  & \mathscr{C}_1 \frac{231 \pi}{256}  
\\[4pt]
 \mathscr{C}_2\left( \frac{1024 \pi}{273}-\frac{4096}{385}-\frac{2048}{3003}\left(-65+22 \pi \right)x \right) & -2\mathscr{C}_2 \left(9 \pi -26\right)+x
    \end{bNiceMatrix},
\end{align*}
\begin{multline*}
     B_{(1,1)}(\boldsymbol{x}) =  \begin{bNiceMatrix}
        -\frac{1}{2}+\frac{x}{2}+y  & 0 
\\[12pt]
 \mathscr{C}_3 \left( \frac{1024}{1001} \pi^{2}-\frac{241408}{45045} \pi +\frac{24064}{3465}-\frac{512}{45045} \left(90 \pi^{2}-467 \pi +598\right)x- \right. & -\frac{\mathscr{C}_3}{6}\left(30 \pi^{2}-159 \pi +208\right)+
\\
\left. \frac{512}{45045} \left(180 \pi^{2}-959 \pi +1261\right) y \right) & \frac{\mathscr{C}_3}{12}\left(60 \pi^{2}-319 \pi +416\right)x+y
    \end{bNiceMatrix},
\end{multline*}
whit constants,
\[\begin{aligned}
    \mathscr{C}_1 & = \frac{1}{-11+4 \pi }, & \mathscr{C}_2 & = \frac{1}{-286+95 \pi }, & \mathscr{C}_3 & = \frac{1}{\left(11 \pi -26\right) \left(-3+\pi \right)}.
\end{aligned}\]
The first three matrix polynomials associated with $A(\boldsymbol{x})$ are: 
\[
    A_{(0,0)}(\boldsymbol{x}) = \begin{bNiceMatrix} 
    24 & \mathscr{C}_1\frac{3465 \pi}{8} \\[4pt]
    0 & -\mathscr{C}_1\frac{3465}{8}  
    \end{bNiceMatrix},
\]
\[
    A_{(1,0)}(\boldsymbol{x}) = \begin{bNiceMatrix} 
    \mathscr{C}_2\left(-360 \left(40 \pi -143\right)+23400 \left(-11+4 \pi \right)x\right) & - \mathscr{C}_3\frac{45045 \pi}{128}\left(-91+30 \pi \right)+\frac{8783775 \pi  }{128 \left(11 \pi -26\right)}x \\[4pt]
    - 4320\mathscr{C}_2 & \mathscr{C}_3 \left( -\frac{195195}{8}+\frac{255255 \pi}{32}-\frac{15015}{32} \left(-286+95 \pi \right)x \right) 
    \end{bNiceMatrix},
\]
\[
    A_{(1,1)}(\boldsymbol{x}) = \begin{bNiceMatrix} 
    -48 \mathscr{C}_3\left(75 \pi^{2}-424 \pi +585\right)+  & \mathscr{C}_4 \left( \frac{135135}{32} \pi  \left(225 \pi^{2}-1217 \pi +1625\right)+ \right. 
\\
 \frac{720 \left(-13+5 \pi \right)}{11 \pi -26}x+720 y & \left. \frac{6081075}{32} \left(-13+5 \pi \right) \pi  \left(-3+\pi \right)x+\frac{6081075\pi}{32 \mathscr{C}_3}y \right)\\[12pt]
    \mathscr{C}_3\left( -64 \left(7 \pi -17\right)+ \right. & \mathscr{C}_4 \left( \frac{45045}{8} \left(144 \pi^{2}-779 \pi +1040\right) -  \right. \\
    \left. 128 \left(-13+5 \pi \right)x \right) & \left. \frac{45045}{8} \left(135 \pi^{2}-752 \pi +1040\right)x - \frac{675675}{4 \mathscr{C}_3}y \right)
    \end{bNiceMatrix},
\]
where,
\[
    \mathscr{C}_4
 = \frac{1}{540 \pi^{3}-4362 \pi^{2}+11693 \pi -10400}.
\]
\end{enumerate}
\section*{Conclusions and outlook}

In this work, we have introduced a framework for studying mixed-type multiple orthogonal polynomials in two variables along the step-line, based on the Gauss–Borel factorization of a suitably constructed moment matrix \cite{manas}. Inspired by the recent contribution of Fernández and Villegas \cite{Lidia}, where multiple orthogonal polynomials in two variables are studied over the full lattice, our approach focuses on the mixed-type case and leads naturally to explicit orthogonality relations, recurrence formulas, Christoffel–Darboux-type identities, and an ABC-type theorem adapted to the step-line configuration. The combination of algebraic techniques and structural analysis allows for a transparent formulation of the key properties of these bivariate polynomial families.
Several directions remain open for further exploration.

 A natural extension of this work concerns the spectral analysis of Christoffel, Geronimus, and Uvarov transformations in the mixed-type, bivariate setting. In this context, one may follow the line developed in \cite{AiM_multivariable} and \cite{JAT_multivariable}, where spectral transformations for multivariate matrix orthogonal polynomials were investigated. There, the bigraded nature of the matrix polynomial theory adds substantial complexity and possibly requires the use of tools from algebraic geometry. Translating and adapting such spectral techniques to the mixed-type case presents a challenging but promising direction.

Another relevant avenue concerns the formulation of Favard-type theorems in this setting. On the one hand, it would be of interest to characterize non-spectral Favard-type results in terms of linear functionals and their moment structure. On the other hand, one could aim to establish spectral Favard-type theorems based on the existence of factorizations of truncated recurrence matrices into products of bidiagonal matrices. In particular, a growing number of bidiagonal factors might be needed to reflect the structural complexity of the step-line case, with the ultimate goal of guaranteeing the oscillatory nature of each truncation. In this direction, we refer to our recent developments in \cite{aim,phys-scrip,Contemporary}, where spectral Favard-type theory for mixed-type multiple orthogonality has been established via positive bidiagonal factorizations.

Finally, another promising direction lies in the connection between mixed-type multiple orthogonality and integrable systems of Toda type. As shown in \cite{afm}, the moment matrix structure and its Gauss–Borel factorization naturally lead to discrete flows governed by multi-component Toda equations. These connections, already present in the univariate and block matrix settings, become significantly richer in the multivariate case, as explored in \cite{AiM_multivariable}. Extending this integrable systems perspective to the mixed-type, bivariate setting—where orthogonality conditions, recurrence relations, and transformations interact in a highly structured way—could lead to the identification of new integrable hierarchies.

	\section*{Acknowledgments}

The authors MMB and MR acknowledges research projects [PID2021- 122154NB-I00], \emph{Ortogonalidad y Aproximación con Aplicaciones en Machine Learning y Teoría de la Probabilidad}  funded  by
\href{https://doi.org/10.13039/501100011033}{MICIU/AEI/10.13039 /501100011033} and by "ERDF A Way of making Europe” and  [PID2024-155133NB-I00],  \emph{Ortogonalidad, aproximación e integrabilidad: aplicaciones en procesos estocásticos clásicos y cuánticos}.


\section*{Declarations}

\begin{enumerate}
	\item \textbf{Conflict of interest:} The authors declare that they have no conflict of interest.
	\item \textbf{Ethical approval:} Not applicable.
	\item \textbf{Author contributions:} MM formulated the problem and developed the methodology. MR, with the support of MM, established the main results. JW contributed to the discussions and worked out an illustrative example.
	\item \textbf{Data availability:}
The Maple code used to generate the bivariate
Jacobi--Pi\~neiro multiple orthogonal polynomials of mixed type on the
triangle is openly available at \url{https://github.com/ManuelManas/Bivariate-Jacobi--Pineiro-of-the-Mixed-Type-on-the-Triangle}.
\end{enumerate}

\end{document}